\def\a{\alpha}
\def\b{\beta}
\def\e{\epsilon}
\def\g{\gamma}
\def\i{{\bf i}}
\def\s{\sigma}
\def\cal{\mathcal}
\def\CC{\mathbb C}
\def\Chat{\hat {\mathbb C}}
\def\ZZ{\mathbb Z}
 \def\l{{\lambda}}
\def\M{{\cal M}}
\def\S{{\cal S}}
\def\SC{{\mathcal {SC}}}
\def\M{{\cal M}}
\def\P{{\cal P}}
\def\PC{\mathcal {PC}}
\def\RR{{\cal R}}
\def\T{{\cal T}}
\newcommand{\C}{\mathbb{C}}
\newcommand{\R}{\mathbb{R}}
\newcommand{\NN}{\mathbb{N}}
\newcommand{\Z}{\mathbb{Z}}
\newcommand{\D}{\mathbb{D}}
\newcommand{\HH}{\mathbb{H}}
\def\tr{\mathop{\rm  Tr}}
\def\Id{\mathop{\rm  Id}}
\renewcommand{\to}{\longrightarrow}
\def\QF{{\cal {QF}}}
\def\Tw{\mathop { {Tw}}} 
\def\teich{{\cal T}}
\def\Teich{Teichm\"uller }
\def\dd{\partial}
\def\ttau { {\underline { \tau }}}
\def\tw{{tw}}
\def\varnothing{\emptyset}
\newtheorem{Theorem}{Theorem}[section]
\newtheorem{Lemma}[Theorem]{Lemma}
\newtheorem{Proposition}[Theorem]{Proposition}
\newtheorem{Remark}[Theorem]{Remark}
\newtheorem{introthm}{Theorem}
\begin{document}

\title[Top terms of polynomial traces] {Top terms of polynomial traces in Kra's plumbing construction.}

\begin{abstract}
 
Let $\Sigma$ be a surface of negative Euler characteristic together with a pants decomposition $\P$. Kra's plumbing construction endows $\Sigma$ with a projective structure as follows. Replace each pair of pants by a triply punctured sphere and glue, or `plumb', adjacent pants by gluing punctured disk neighbourhoods of the punctures. The gluing  across the $i^{th}$ pants curve  is defined by a complex parameter $\tau_i \in \CC$. The associated holonomy representation $\rho: \pi_1(\Sigma) \to PSL(2,\C)$  gives a projective structure on $\Sigma$ which depends holomorphically on the $\tau_i$. In particular, the traces of all elements $\rho(\g) , \g \in \pi_1(\Sigma)$, are polynomials in the $\tau_i$.

Generalising results proved in~\cite{kstop, seriesmaskit} for the once and twice punctured torus respectively, we prove a formula giving a simple linear relationship between the coefficients of the top terms of $\rho(\g)$, as polynomials in the $\tau_i$, and the Dehn-Thurston coordinates of $\g$ relative to $\P$.

This will be applied elsewhere to give a formula for the asymptotic directions of pleating rays in the Maskit embedding of $\Sigma$ as the bending measure tends to zero, see~\cite{maloni}.

\medskip

\noindent {\bf MSC classification:} 30F40, 57M50 
\end{abstract}

\author{Sara Maloni and Caroline Series} 

\address{\begin{flushleft}\rm {\texttt{S.Maloni@warwick.ac.uk \; (http://www.maths.warwick.ac.uk/$\sim$marhal),\\ 
C.M.Series@warwick.ac.uk \;  (http://www.maths.warwick.ac.uk/$\sim$masbb), } } \\ Mathematics Institute, 
 University of Warwick \\ Coventry CV4 7AL, UK \end{flushleft}}
\date{\today}
\maketitle

\section{Introduction}
\label{sec:introduction}

Let $\Sigma$ be a surface of negative Euler characteristic together  with a  pants decomposition $\P$. Kra's plumbing construction endows $\Sigma$ with a projective structure as follows. Replace each pair of pants by a triply punctured sphere and glue, or `plumb', adjacent pants by gluing punctured disk neighbourhoods of the punctures. The gluing  across the $i^{th}$ pants curve  is defined by a complex parameter $\tau_i \in \CC$. More precisely,  $zw = \tau_i$ where $z,w$ are standard holomophic coordinates on punctured disk neighbourhoods of the two punctures. The associated holonomy representation $\rho: \pi_1(\Sigma) \to PSL(2,\C)$  gives a projective structure on $\Sigma$ which depends holomorphically on the $\tau_i$, and in which the pants curves themselves are automatically parabolic. In particular, the traces of all elements $\rho(\g) , \g \in \pi_1(\Sigma)$, are polynomials in the $\tau_i$.

The main result of this paper is a very   simple relationship between the coefficients of the top terms of $\rho(\g) $, as polynomials in the $\tau_i$, and the Dehn-Thurston coordinates of $\g$ relative to $\P$. This generalises results of~\cite{kstop, seriesmaskit} for the once and the twice punctured torus respectively.  

Our formula is as follows. Let $\S$ denote the set of homotopy classes of multiple loops on $\Sigma$, and let the pants curves defining $\P$ be $\s_i, i=1,\ldots,\xi$. (For brevity we usually refer to elements of $\S$ as \emph{curves}, see Section~\ref{sec:background}.) The \emph{Dehn-Thurston coordinates} of $\g \in \S$ are $\i(\g) = (q_i,p_i), i=1,\ldots,\xi$,  where $q_{i} = i(\g,\s_{i}) \in \NN \cup \{0\}$ is the geometric intersection number between $\g$ and  $\s_{i}$ and $p_{i} \in \ZZ$ is the twist of $\g$ about $\s_i$. We prove:

\begin{introthm}\label{thm:traceformula}  
Let $\gamma$ be a connected simple closed curve on $\Sigma$, not parallel to any of the pants curves $\s_i$. Then $\tr \rho(\g)$ is a polynomial in $\tau_{1}, \cdots , \tau_{\xi}$ whose top terms are given by:
\begin{align*} 
\tr \rho(\g) &=\pm i^{q} 2^{h} \Bigl(\tau_{1}+\frac{(p_{1}-q_{1})}{q_{1}}\Bigr)^{q_{1}} \cdots \Bigl(\tau_{\xi}+\frac{(p_{\xi}-q_{\xi})}{q_{\xi}}\Bigr)^{q_{\xi}} + R,\\  
& = \pm i^{q} 2^{h} \left(\tau_{1}^{q_{1}}\cdots\tau_{\xi}^{q_{\xi}} +\sum_{i=1}^{\xi} (p_{i}-q_{i}) \tau_{1}^{q_1} \cdots \tau_i ^{q_{i}-1}\cdots\tau_{\xi}^{q_{\xi}} \right) + R     
\end{align*} 
where 
\begin{itemize}
	\item $q =\sum_{i=1}^{\xi}q_{i}>0$;
	\item $R$ represents terms with total degree in $\tau_{1} \cdots \tau_{\xi}$ at most $q - 2$ and of degree at most $q_{i}$ in the variable $\tau_{i}$;
	\item $h= h(\g)$ is the total number of $scc$-arcs in the standard representation of $\g$ relative to $\P$, see below. 
\end{itemize}
If $q =0$, then $\gamma = \s_i$ for some $i$,
$\rho(\g)$ is parabolic,  and $\tr \rho(\g) = \pm 2$.
\end{introthm}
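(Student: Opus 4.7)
The plan is to compute $\rho(\gamma)$ as an explicit product of $PSL(2,\CC)$ matrices --- one factor per arc of $\gamma$ in its standard representative relative to $\P$, alternating with a gluing factor per crossing of $\gamma$ with a pants curve --- and then to read off the top-degree terms of the trace. I put $\gamma$ into standard position: the Dehn-Thurston coordinates $(q_i,p_i)$ tell us that $\gamma$ meets $\sigma_i$ in exactly $q_i$ points and that inside each pair of pants $P$ it is cut into arcs of prescribed combinatorial types, namely connector arcs joining distinct boundary components of $P$ and $scc$-arcs having both endpoints on the same boundary of $P$. Fix once and for all a holonomy for the triply-punctured-sphere structure on each $P$: to each arc-type this assigns a definite matrix in $PSL(2,\CC)$. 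The plumbing identification $zw=\tau_i$ across $\sigma_i$ is implemented by a gluing matrix $T_i(\tau_i)$ in which $\tau_i$ occurs linearly in one entry while the remaining entries are pure-imaginary constants arising from the square-root in $w=\tau_i/z$. Then $\rho(\gamma)$ is a cyclically ordered product of arc matrices alternating with $q_i$ gluing factors $T_i(\tau_i)$ for each $i$, so that the total $\tau$-degree of every matrix entry is at most $q=\sum q_i$, with partial $\tau_i$-degree at most $q_i$.

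To extract the top-degree coefficient I replace each $T_i(\tau_i)$ by its leading-in-$\tau_i$ part and multiply out; the product then telescopes in a controlled way. The prefactor $\pm i^q$ is collected from the $q$ pure-imaginary off-diagonal entries of the $T_i$'s. The factor $2^h$ arises locally inside each pair of pants: each $scc$-arc, when sandwiched between its two adjacent gluing matrices, contributes an extra factor of $2$ at leading order, whereas a connector arc contributes $1$. After taking the trace this gives $\pm i^q 2^h\,\tau_1^{q_1}\cdots\tau_\xi^{q_\xi}$ as the leading monomial, with the stated bounds on the remainder $R$ following automatically from the degree count of the matrix product.

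The hard part is to show that the coefficient of each next-to-leading monomial $\tau_1^{q_1}\cdots\tau_i^{q_i-1}\cdots\tau_\xi^{q_\xi}$ equals $\pm i^q 2^h(p_i-q_i)$; once this is established, the sum of all subleading terms reassembles (modulo total degree $\leq q-2$) into the shifted factored form $\prod_i(\tau_i+(p_i-q_i)/q_i)^{q_i}$. These subleading coefficients come from two sources. The first is the twist $p_i$ of $\gamma$ around $\sigma_i$, which is realized in the plumbing by composing each of the $q_i$ gluings across $\sigma_i$ with an appropriate power of the Dehn-twist matrix $\bigl(\begin{smallmatrix}1&1\\0&1\end{smallmatrix}\bigr)$; summed over the $q_i$ crossings this contributes $p_i$. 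The second is the sub-leading entries of the $T_i$'s paired with the neighbouring arc matrices, which collectively contribute $-q_i$. Verifying that these two contributions combine uniformly to $p_i-q_i$ across all combinatorial configurations of adjacent arcs is the principal bookkeeping obstacle; I would handle it by a case analysis over the finitely many arc types in a pair of pants, reducing to a local calculation at each crossing of $\sigma_i$ and then summing over the $q_i$ crossings. Finally, if $q=0$ then $\gamma$ is simple, connected and disjoint from every $\sigma_i$, so it must coincide with some $\sigma_i$; this curve is parabolic by construction of the plumbing, whence $\tr\rho(\gamma)=\pm 2$.
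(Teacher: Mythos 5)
Your overall strategy --- writing $\rho(\gamma)$ as a cyclic product with one gluing factor $\Omega_{\e}^{-1}J^{-1}T_{\tau_i}^{-1}\Omega_{\e'}$ per crossing of $\s_i$, interleaved with arc matrices, and reading off degrees --- is indeed the paper's strategy, and your degree bounds, the source of $\pm i^{q}$, and the $q=0$ case are all fine. But the two steps you wave at are precisely where the real work lies, and as described neither goes through. First, the claim that the stated bounds on the remainder $R$ ``follow automatically from the degree count'' is false: the degree count only shows the trace has total degree at most $q$; it does not show that the degree-$(q-1)$ part is exactly $\sum_i(p_i-q_i)\tau_1^{q_1}\cdots\tau_i^{q_i-1}\cdots\tau_\xi^{q_\xi}$ with nothing left over, nor that the degree-$q$ coefficient is nonzero. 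Since the trace of a product is not the product of traces, no purely local computation at each crossing can determine these coefficients. The paper proves this globally by induction on the cyclic word, using the identity $\tr(AB)=\tr(A)\tr(B)-\tr(AB^{-1})$ to split the word, showing $\tr\prod_{j}A_{X_j}\Omega_{i_j}=\pm\prod_j X^*_j+R$ where $X^*_j=X_j+c_j$ and the correction $c_j\in\{0,1,2\}$ depends on which of $\Omega_0,\Omega_1$ flank $A_{X_j}$, i.e.\ on how $\gamma$ enters and leaves the adjacent pants. You have no analogue of this induction, and it is the heart of the proof.

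Second, your proposed decomposition of the subleading coefficient --- ``$p_i$ from the Dehn-twist powers, $-q_i$ from the arc matrices'' --- does not match what actually happens. The twisting exponents that appear in the matrix product are governed by the \emph{Penner} twist $\hat p_i$ (the winding in the annulus when $\gamma$ is in Penner standard position), and they contribute $2\hat p_i$, not $p_i$, to the linear coefficient; the discrepancy between $2\hat p_i$ and $p_i-q_i$ is exactly made up by the corrections $c_j$ above together with the nontrivial comparison $\hat p_{i} = \bigl(p_{i}+l(A,E;B)+l(C,E;D)-q_{i}\bigr)/2$ between the two twist conventions (Theorem~\ref{thm:twistrelation}). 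Your proposal never distinguishes the two notions of twist, so the ``case analysis over arc types'' you defer to could not close up without first establishing that relation and then the global trace lemma. In short: the skeleton is right, but the two load-bearing lemmas (the inductive trace computation and the P-twist/DT-twist comparison) are absent rather than merely postponed.
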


The non-negative integer $h = h(\g)$ is defined as follows. The curve $\g$ is first arranged to intersect each pants curve minimally. In this position, it intersects a pair of pants $P$ in  a number  of arcs joining boundary loops of  $P$. We call one of these an \emph{$scc$-arc}  (short for same-(boundary)-component-connector,  called an \emph{archetype} in Penner~\cite{Penner})  if it joins one boundary component to itself, and  denote by $h $  the total number of  $scc$-arcs, taken over all pants in $\P$.

The precise definition of the twist coordinates $p_i$ in Theorem~\ref{thm:traceformula} requires some care; we use essentially the standard definition implied in~\cite{flp} and explained in detail in~\cite{Dylan} (called here the DT-twist, see  Section~\ref{sec:dylantwist}), although for the proof we find useful the form given by Penner~\cite{Penner} (called here the P-twist and denoted $\hat p_i$, see  Section~\ref{sec:pennertwist}).

We remark that the formula in Theorem~\ref{thm:traceformula}  could of course be made neater by replacing the parameter $\tau$ by $\tau-1$; we use $\tau$ to be in accordance with the conventions of~\cite{kstop,seriesmaskit}, see also Section~\ref{sec:marking2}.
 
\medskip

We believe the formula in Theorem~\ref{thm:traceformula} noteworthy  in its own right. However the main motivation for this work  was the following. If the representation $\rho$ constructed in the above manner is free and  discrete, then the resulting hyperbolic $3$-manifold $M=\HH^3/\rho(\pi_1(\Sigma))$  lies on the boundary of quasifuchsian space $\QF(\Sigma)$. One end of $M$ consists of  a union of triply punctured spheres obtained by pinching in $\Sigma$ the curves $\s_i$ defining $\P$. Suppose that, in addition, $\rho( \pi_1(\Sigma))$ is geometrically finite and that the other end $\Omega^+/\rho( \pi_1(\Sigma))$ of $M$ is  a Riemann surface homeomorphic to $\Sigma$. Since the triply punctured spheres are rigid, it follows from Ahlfors-Bers' measurable Riemann mapping theorem that the Riemann surface structure of $\Omega^+/\rho( \pi_1(\Sigma))$ runs over the \Teich space $\T(\Sigma)$ of $\Sigma$. The image of the space of all such groups in the character variety $\cal R$ of $\Sigma$ is called the \emph{Maskit embedding} of $\T(\Sigma)$.

In~\cite{kstop, seriesmaskit}, special cases of the trace formula were important in  constructing a computational method of locating  the image $\cal M$ of $\T(\Sigma) $  in $\cal R$. In those papers we defined a \emph{pleating ray} to be a line in $\cal R$ along which the projective class of the bending measure was kept constant. The trace formulae enabled us to find the asymptotic directions of pleating rays in $\M$ as the bending measure tends to zero. Theorem~\ref{thm:traceformula} allows  the extension of these results to the general case, see~\cite{maloni}.
 
The plan of this paper is as follows. After establishing preliminaries in Section~\ref{sec:background}, in Section~\ref{sec:twist} we review the Dehn-Thurston coordinates and, in particular, the definition of twists. In Section~\ref{sec:gluing} we discuss the gluing construction which leads to the family of projective structures and their holonomy representation. In Section~\ref{sec:examples} we explain in detail the holonomy representation in various special cases, starting with arcs in a single pair of pants and going on to the  one holed torus and four holed sphere. Finally, in Section~\ref{sec:mainthm},  we make explicit the general combinatorial pattern of  matrix products obtained in the holonomy,   and use this to give an inductive proof of Theorem~\ref{thm:traceformula}.

\section{Background and Notation}
\label{sec:background}

Suppose given a surface $\Sigma = \Sigma_{g}^{b}$ of finite type and negative Euler characteristic, and choose a maximal set $\PC = \{ \s_{1}, \ldots, \s_{\xi}\}$ of homotopically distinct and non-boundary parallel loops in $\Sigma$ called \emph{pants curves}, where  $\xi = \xi(\Sigma) = 3g-3+b$ is the complexity of the surface. These connected curves split the surface into $k=2g-2+b$ three-holed spheres $ P_1,\ldots, P_k$, called \emph{pairs of pants}. (Note  that  the boundary of $P_i$ may  include  punctures of $\Sigma$.)  We refer to both the set $\cal P = \{ P_1,\ldots, P_k\}$, and the set $\PC $, as a \emph{pants decomposition} of  $\Sigma$.

We take $P_i$ to be a closed three-holed sphere whose interior $\mathrm{Int}(P_i)$ is embedded in $\Sigma$; the closure of $\mathrm{Int}(P_i)$ fails to be embedded precisely in the case in which two of its boundary curves are identified in $\Sigma$, forming an embedded one-holed torus $\Sigma_{1,1}$. Thus each pants curve $\s= \s_i$ is the common boundary of one or two pants whose union we refer to as the \emph{modular} surface associated to $\s$, denoted $M(\s)$. If the closure of $\mathrm{Int}(P_i)$ fails to be embedded then $M(\s)$ is a one-holed torus $\Sigma_{1,1}$, otherwise it is a four-holed sphere $\Sigma_{0,4}$.

Any hyperbolic pair of pants $P$ is made by gluing two right angled hexagons along three alternate edges which we call its \emph{seams}. In much of what follows, it will be convenient to designate one of these hexagons as `white' and one as `black'. A properly embedded arc  in $P$, that is, an arc with its endpoints on $\dd P$, is called  $scc$ (same component connector) if it has both its endpoints on the same  component of $\dd P$  and $dcc$ (different component connector) otherwise.

Let $\S_0 = \S_0(\Sigma)$ denote the set of free homotopy classes of connected closed simple non-boundary parallel loops on $\Sigma$, and let $\S = \S(\Sigma)$ be the set of  curves on $\Sigma$, that is, the set of finite unions of non-homotopic curves in $\S_0$. For simplicity we usually refer to elements of $\S$ as  `curves' rather than `multi-curves', in other words, a curve is not required to be connected. The geometric intersection number $i(\a,\b)$  between  $\a,\b \in \S$ is the least number of intersections between curves representing the two homotopy classes, that is $$i(\a, \b) = \min_{a \in \a , \; b \in \b} |a \cap b|.$$  

\subsubsection{Convention on dual curves}
\label{sec:dualcurves}

We shall need  to consider \emph{dual curves} to $\s_i \in \PC$, that is, curves which intersect $\s_i$ minimally and which are completely contained in $M(\s_i)$, the union of the pants $P,P'$ adjacent to $\s_i$. The intersection number of such a connected curve with $\s_i$ is $1$  if $M(\s_i)$ a  one-holed torus and $2$ otherwise.  In the first case, the curve is made by identifying the endpoints of a single $dcc$-arc  in the pair of pants adjacent to $\s_i$ and, in the second, it is the union of two $scc$-arcs, one in each of the two pants whose union is $M(\s_i)$. We adopt a useful convention introduced in~\cite{Dylan} which simplifies the formulae in such a way as to avoid  the need to distinguish  between these two cases. Namely, for those $\s_i$  for which $M(\s_i)$ is $\Sigma_{1,1}$, we define the dual curve $D_i \in \S$  to be \emph{two} parallel copies of the connected curve intersecting $\s_i$ once, while if $M(\s_i)$ is $\Sigma_{0,4}$ we take a single copy. In this way we always have, by definition, $i(\s_i, D_i) =2$, where $i(\a,\b)$ is the geometric intersection number as above. 

A \emph{marking} on $\Sigma$ is the specification of a fixed base surface $\Sigma_0$, together with a  homeomorphism $\Psi: \Sigma_0 \to \Sigma$. Markings can be defined in various  equivalent ways, for example by specifying the choice of dual curves,  see Section~\ref{sec:dylantwist} below.  
 
\subsubsection{Convention on twists}
\label{sec:dehntwists}

Our convention will always be to measure twists to the right as positive. We denote by ${\Tw}_{\s}(\g)$ the \emph{right} Dehn twist of the curve $\g$ about the curve $\s$. 

\section{Dehn-Thurston coordinates}
\label{sec:twist}

Suppose we are given a surface $\Sigma$ together with a pants decomposition $\cal P$ as above. Let $\g \in \S$ and for $i = 1, \ldots, \xi$, let $q_{i} = i(\g,\s_{i}) \in \Z_{\geqslant 0}$. Notice that if $\s_{i_{1}}, \s_{i_{2}}, \s_{i_{3}}$ are pants curves which together bound a pair of pants whose interior is embedded in $\Sigma$, then the sum $q_{i_{1}} + q_{i_{2}} + q_{i_{3}}$ of the corresponding intersection numbers is even. The $q_{i} = q_i(\g)$ are sometimes called the \textit{length parameters} of $\g$.
 
To define the  \textit{twist parameter} $\tw_i = \tw_i(\g) \in \ZZ$ of $\g$ about $\s_i$, we first have to fix a marking on $\Sigma$, for example by fixing a specific choice of dual curve $D_i$ to each pants curve $\s_i$, see Section~\ref{sec:dylantwist} below. Then, after isotoping $\g$ into a well-defined standard position relative to $\P$ and to the marking, the twist $ \tw_i$ is the signed number of times that $\g$ intersects  a  short  arc  transverse to $\s_i$. We make the convention that if $i(\g,\s_i) = 0$, then $\tw_i(\g) \geqslant 0$ is the number of components in $\g$ freely homotopic to $\s_i$.

There are various ways of defining the \emph{standard position} of $\g$, leading to differing definitions of the twist. The parameter $\tw_i(\g) = p_i(\g)$ which occurs in the statement of Theorem~\ref{thm:traceformula} is the one defined by Dylan Thurston~\cite{Dylan}, however in the proof of the formula we will find it convenient to use a slightly different definition $\tw_i(\g) = \hat p_i(\g)$ given by Penner~\cite{Penner}. Both of these definitions are explained  in detail below, as is the precise relationship between them. With either definition, a classical theorem of  Dehn~\cite{Dehn}, see also~\cite{Penner} (p.12), asserts that the length and twist parameters uniquely determine $\g$:

\begin{Theorem}\textbf{(Dehn's theorem)}\\ 
The map $\Psi: \S(\Sigma) \to \Z_{\geqslant 0}^{\xi} \times \Z^{\xi}$ which sends $\g \in \S(\Sigma)$ to  \\ $(q_{1}(\g), \ldots, q_{\xi}(\g);\tw_{1}(\g), \ldots, \tw_{\xi}(\g))$ is an injection. The point \\ $(q_{1}, \ldots, q_{\xi}, \tw_{1}, \ldots, \tw_{\xi}) $  is in the image of $\Psi$ (and hence corresponds to a  curve) if and only if:
\begin{enumerate}
	\renewcommand{\labelenumi}{(\roman{enumi})}
		\item if $q_{i} = 0$, then $\tw_{i} \geqslant 0$, for each $i = 1, \ldots, \xi$.
		\item if $\s_{i_{1}}, \s_{i_{2}}, \s_{i_{3}}$ are pants curves which together bound a pair of pants whose interior is embedded in $\Sigma$, then the sum $q_{i_{1}} + q_{i_{2}} + q_{i_{3}}$ of the corresponding intersection numbers is even.
\end{enumerate} 
\end{Theorem}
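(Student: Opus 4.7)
The plan is to prove injectivity first, then characterise the image.

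For injectivity, I would first isotope $\g$ into a standard position in which it intersects each $\s_i$ in exactly $q_i = i(\g,\s_i)$ points; such a position exists by the definition of geometric intersection number. Restricted to each pair of pants $P$ with boundary curves $\s_{i_1},\s_{i_2},\s_{i_3}$, the curve $\g$ becomes a disjoint union of properly embedded arcs and simple closed loops. Any loop is either null-homotopic or peripheral, so in minimal position the closed loop components in $P$ contribute only parallel copies of the pants curves $\s_{i_j}$; these are accounted for by the convention on $\tw_i$ in the degenerate case $q_i=0$. The remaining arcs split into $scc$-arcs and $dcc$-arcs, and a standard count shows that the number of arcs of each type is uniquely determined by the triple $(q_{i_1},q_{i_2},q_{i_3})$ via explicit formulas. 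This determines $\g \cap P$ up to isotopy of $P$ rel $\dd P$.

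The next step is to show that the twist parameters $\tw_i$ determine how the arc patterns in adjacent pants are glued across each $\s_i$. Having fixed the marking (that is, a choice of dual curves $D_i$ as in Section~\ref{sec:dualcurves}), a canonical ``zero twist'' gluing is specified, and any other gluing differs from it by an integer power of the Dehn twist $\Tw_{\s_i}$. The twist parameter records this integer, and so together with the length parameters it recovers $\g$ uniquely. Care is needed when $q_i=0$: then $\g$ and $\s_i$ are disjoint, no gluing is involved, and the convention $\tw_i \geqslant 0$ counts the number of parallel copies of $\s_i$ in $\g$, giving injectivity in that case too.

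For the image characterisation, necessity of (i) follows from this convention, while necessity of (ii) follows because in each pair of pants the total number of arc endpoints on the three boundary circles, counted with multiplicity, is $q_{i_1}+q_{i_2}+q_{i_3}$, and these endpoints must be paired up by arcs, forcing the sum to be even. For sufficiency, given any tuple $(q_i,\tw_i)$ satisfying (i) and (ii), I would construct $\g$ explicitly: in each pair of pants place the unique isotopy class of arc system realising the triple $(q_{i_1},q_{i_2},q_{i_3})$ (possible precisely because of (ii)), and then glue the arcs on the two sides of each $\s_i$ according to the prescribed twist $\tw_i$, adding $\tw_i$ parallel copies of $\s_i$ in the degenerate case $q_i=0$.

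The main obstacle is the consistent bookkeeping of ``standard position'' and the twist parameter. One must verify that the chosen standard position is unique up to ambient isotopy in the complement of $\PC$, that the arc multiplicities produced by the triple $(q_{i_1},q_{i_2},q_{i_3})$ are non-negative integers exactly under hypothesis (ii), and that the two conventions for the twist (the DT-twist $p_i$ and the P-twist $\hat p_i$ discussed in Section~\ref{sec:twist}) differ by an explicit bookkeeping shift depending only on the $q_i$, so that injectivity with respect to one implies injectivity with respect to the other.
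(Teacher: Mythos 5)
The paper offers no formal proof of this theorem: it is quoted as a classical result of Dehn with references to \cite{Dehn} and \cite{Penner}, followed by precisely the informal justification you give (the $q_i$ determine the arc system in each pair of pants subject to the parity condition, and the twist selects one of the $\Z$ possible matchings in the cyclic cover of an annulus about $\s_i$), so your proposal is the standard argument and coincides with the paper's intended one. One slip worth correcting: two matchings of the $q_i$ arc endpoints across $\s_i$ differ by an integer number of \emph{elementary shifts} of the matching (the $\Z$-torsor the paper describes in the cyclic cover), not by an integer power of the Dehn twist $\Tw_{\s_i}$ --- a full Dehn twist effects $q_i$ elementary shifts and changes the twist parameter by $2q_i$, while a single shift changes it by $2$, so as literally written your classification of gluings would only reach matchings congruent to the canonical one modulo $q_i$ and surjectivity onto all admissible $(q_i,\tw_i)$ would fail; the fix is simply to let the recorded integer be the shift count rather than a Dehn-twist exponent, as your next sentence in fact implicitly does.
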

		
We remark that as a special case of (ii), the intersection number  with a pants curve which bounds an embedded once-punctured torus or  twice-punctured disk in $\Sigma$ is even. 

One can think of this theorem in the following way. Suppose given a  curve $\g\in \S$, whose  length parameters $q_{i}(\g)$ necessarily satisfy the parity condition (ii). Then the $q_{i}(\g)$ uniquely determine $\g \cap P_{j}$ for each pair of pants $P_{j}$, $j= 1, \ldots, k$, in accordance with the possible arrangements of arcs in a pair of pants, see for example~\cite{Penner}. Now given two pants adjacent along the curve $\s_{i}$, we have $q_{i}(\g)$ points of intersection coming from each side and we have only to decide how to match them together to recover $\g$. The matching takes place in the cyclic cover of an annular neighbourhood of $\s_i$. The twist parameter $tw_i(\g)$ specifies which of the $\Z$ possible choices is used for the matching. 
 
\subsection{The DT-twist}
\label{sec:dylantwist}

In~\cite{Dylan}, Dylan Thurston gives a careful definition of the twist $\tw_i(\g) = p_i(\g)$ of $\g \in \S$ which is essentially the `folk' definition and the same as that implied in~\cite{flp}. He observes that this definition has a nice intrinsic characterisation, see Section~\ref{sub:intrisic} below. Furthermore, it  turns out to be the correct definition for our formula in Theorem~\ref{thm:traceformula}.

\subsubsection{The marking}
\label{sec:marking}

Given the pants decomposition $\P $ of $\Sigma$, we note, following~\cite{Dylan}, that we can fix a marking on $\Sigma$ in three equivalent ways. These are: 
\begin{enumerate}
\renewcommand{\labelenumi}{(\alph{enumi})}
    \item a \textit{reversing map}: an orientation-reversing map $R: \Sigma \to \Sigma$ so that for each $i = 1, \ldots, \xi$ we have $R(\s_{i}) = \s_{i}$;
	\item a \textit{hexagonal decomposition}: this can be defined by a   curve which meets each pants curve twice, decomposing each pair of pants into two hexagons; 
	\item \textit{dual curves}: for each $i$, a curve $D_{i}$ so that $i(D_{i}, \s_{j}) = 2\delta_{ij}$.
\end{enumerate}

The characterisations (a) and (b)  are most easily understood in connection with a particular choice of hyperbolic metric on $\Sigma$. Recall that a pair of pants  $P$ is the union  of two right angle hexagons glued along  its seams. There is an orientation reversing symmetry of $P$ which fixes the seams. The endpoints of exactly two seams meet each component of the boundary $\dd P$. Now let $\Sigma_0$ be a hyperbolic surface formed by gluing  pants $P_1, \ldots,P_k$ in such a way that the seams are exactly matched on either side of each common boundary curve $\s_i$. In this case the existence of the orientation reversing map $R$ as in (a) and the hexagonal decomposition as in (b)  are clear and are clearly equivalent. 

If the modular surface associated to $\s_i$ is made up of two distinct pants $P,P'$, then, as explained above, the dual curve $D_i$ to $\s_i$ is obtained by gluing the two $scc$-arcs  in $P$ and in $P'$ which run from $\s_i$ to itself.  Each arc meets $\s_i$ orthogonally so that in the metric $\Sigma_0$ the two endpoints on each side of $\s_i$ are exactly matched by the gluing. If the modular surface is a single pair of pants $P$, then the dual curve is obtained by gluing the single $dcc$-arc in $P$ which runs from $\s_i$ to itself. Once again both ends of this arc meet  $\s_i$ orthogonally and in the metric $\Sigma_0$ are exactly matched by the gluing. In this case,  following the convention explained in Section~\ref{sec:background}, we take the dual curve $D_i$ to be two parallel copies of the loop just described. Thus in all cases $i(D_{i}, \s_{j}) = 2\delta_{ij}$ and furthermore   the curves $D_{i}$ are  fixed by $R$.  

A general surface $\Sigma$ can be obtained from $\Sigma_0$ by performing a Fenchel-Nielsen twist about each $\s_i$.  Namely, if $A_i = \s_i \times [0,1]$ is an annulus around $\s_i$ and if we parameterise $\s_i$ as $s \mapsto \s_i(s) \in \Sigma$ for $s \in [0,1)$, then the distance $t$ twist, denoted $FN_t: \Sigma_0 \to \Sigma$, maps $A_i$ to itself by  $(\s_i(s),\theta) \mapsto (\s_i(s+\theta t),\theta)$ and is the identity elsewhere. Clearly $FN_t$ induces a reversing map, a hexagonal decomposition, and dual curves on the surface $FN_t (\Sigma)$, showing that each of (a), (b) and (c) equivalently define  a marking on an arbitrary surface $\Sigma$.

\subsubsection{The twist}
\label{sec:twist2} 

Having defined the marking, we can now define the twist $p_i(\g)$ for any $\g \in \S$. Arrange, as above, the dual curves $D_{i}$ to be fixed by $R$, so that, in particular, if $\s_i$ is the boundary of a single pair of pants $P$, then the two parallel components of the curve $D_i$ are contained one in each of the two hexagons making up $P$. For each $i = 1, \ldots, \xi$, choose a small annular neighbourhood $A_{i}$ of $\s_{i}$, in such a way that  the complement $\Sigma - \cup_{i = 1}^{\xi} \mathrm{Int}(A_i)$ of the interiors of these annuli in $\Sigma$ are pants $\hat P_{1}, \ldots, \hat P_{k}$. Arrange $\g$ so that its intersection with each  $\hat P_{i}$ is fixed by $R$ and so that  it is transverse to $D_{i}$. Also push any component of $\g$ parallel to any  $\s_{i}$ into $A_{i}$. 
  
If $q_i = i(\g,\s_i)=0$, define $p_{i} \geq 0$ to be the number of components of $\g$ parallel to $\s_i$.  Otherwise,  $q_i = i(\g,\s_i)> 0$. In this case, orient both $\g \cap A_{i}$ and $D_{i} \cap A_{i}$ to run consistently from one boundary component of $A_{i}$ to the other. (If $M(\s_i)$ is $\Sigma_{0,4}$, then the two arcs of $D_i \cap A_i$ will be oriented in opposite directions relative to the connected curve $D_i$.) Then define $$p_{i} =\hat{i}(\g \cap A_{i},D_{i} \cap A_{i}),$$ where $\hat i (\a,\beta)$ is the algebraic intersection number between the curves $\a$ and $\beta$, namely the sum of the indices of the intersection points of $\alpha$ and $\beta$, where an intersection point is of index $+1$ when the orientation of the intersection agrees with the orientation of $\Sigma$ and $-1$ otherwise.

Note that this definition is independent of both the choice of the orientations of $\g \cap A_{i}$ and $D_{i} \cap A_{i}$, and of the choice of the arrangement of $\g$ in the pants adjacent to $\s_i$. Also note  that, following the convention about dual curves in Section~\ref{sec:dualcurves},  $p_i$ is always even.  Two simple examples are illustrated in Figures~\ref{figure1_13} and~\ref{figure1_14}.

\subsubsection{An alternative definition}
\label{sec:alternative} 

The twist $p_i$  can also be described in a slightly different way as follows. Lift $A_i$ to its $\ZZ$-cover which is an infinite strip $H$.  As shown in Figures~\ref{figure1_13} and~\ref{figure1_14}, the lifts  of $D_i \cap A_i$ are arcs joining the two boundaries $\dd_0 H$ and $\dd_1H$ of $H$. They are equally spaced like rungs of a ladder in such a way that there are exactly two lifts in any period of  the translation corresponding to $\s_i$. Any arc of $\g$ enters $H$ on one side and leaves on the other.  Fix such a rung $D_*$ say and number the strands  of $\g$ meeting $\dd_0 H$  in order as $X_n, n \in \ZZ$, where $X_0$ is the first arc to the right of $D_*$  and $n$ increases moving to the right along $\dd_0 H$, relative to the orientation of the incoming strand of $\gamma$. Label the endpoints of $\g$ on $\dd_1H$ by $X'_n, n \in \ZZ$  correspondingly, as shown in Figure~\ref{figure1_13}. Since $\g$ is simple, if $X_0$ is matched to  $X'_r$,  then $X_n$  is matched to $X'_{n+r}$ for all $n \in \ZZ$.  Then it  is not hard to see that $r=p_i/2$.

\begin{figure}[hbt] 
\centering 
\includegraphics[height=3.5cm]{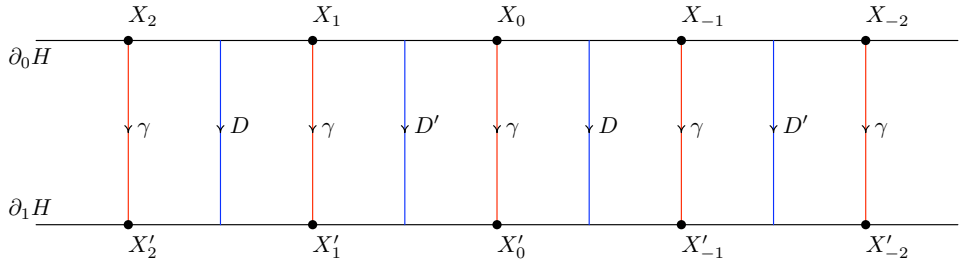} 
\caption{A curve $\g$ with $p_i(\g) = 0$. The arcs $D,D'$ together project to the dual curve $D_i$.}
\label{figure1_13}
\end{figure}

\begin{figure}[hbt] 
\centering 
\includegraphics[height=3.5cm]{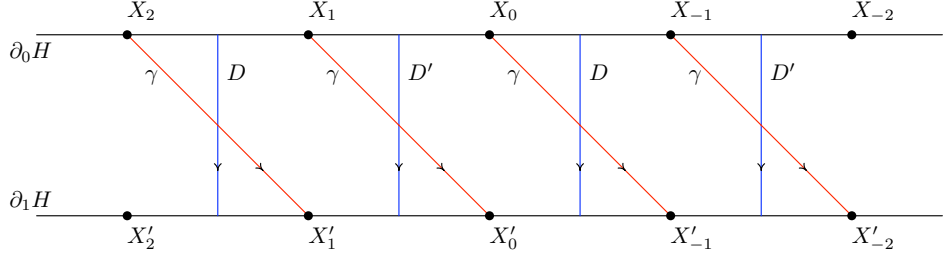} 
\caption{A curve $\g$ with $p_i(\g) = -2$.}
\label{figure1_14}
\end{figure}

\subsubsection{Intrinsic characterisation}
\label{sub:intrisic}
 
The intrinsic characterisation of the twist  in~\cite{Dylan} uses the \emph{Luo product} $\a \cdot \b $ of curves $\a,\b \in \S$  on an oriented surface $\Sigma$. This is defined as follows~\cite{Luo2, Dylan}:

\begin{itemize}
	\item If $a\cap b = \emptyset$, then $\a \cdot \b = \a \cup \b \in \S$.
	\item Otherwise, arrange $\a $ and $\b $ in minimal position, that is, such that $i(\a \cap \b) = |\a \cap \b|$. In a neighbourhood of each intersection point $x_j \in \a\cap \b$,  replace  $\a \cup \b$ by  the union of the two arcs which turn  left from $\a$ to $\b$ relative to the orientation of $\Sigma$,  see  Figure \ref{Fig:resolution}. (In~\cite{Luo2} this is called the \emph{resolution} of $\a \cup \b$ from $\a$ to $\b$ at $x_j$.) Then $\a \cdot \b$ is the curve made up from  $\a \cup \b$ away from the points $x_j$, and the replacement arcs near each $x_j$.
\end{itemize} 

\begin{figure}[hbt] 
\centering 
\includegraphics[height=4cm]{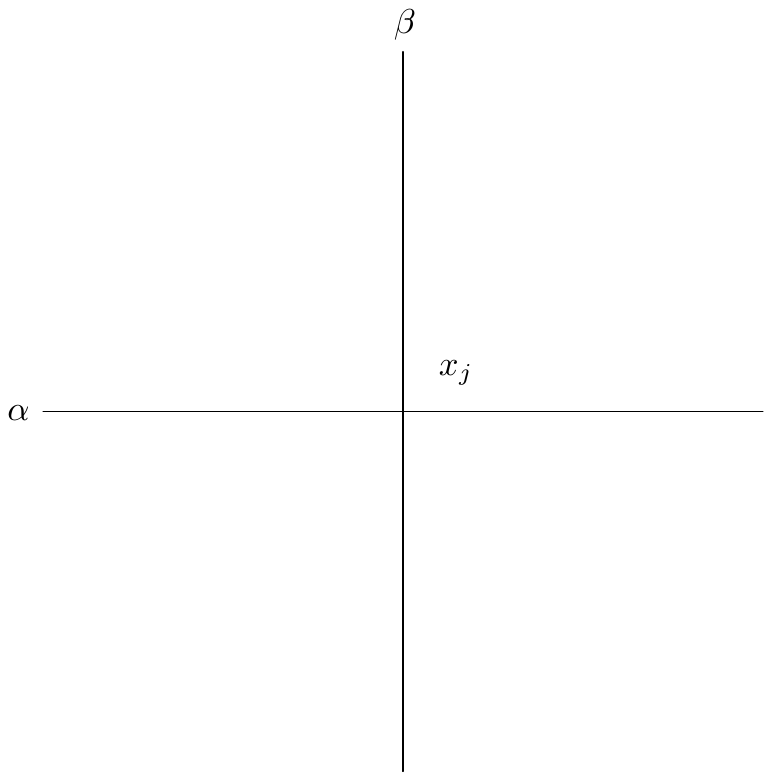}
\hspace{0.5cm} 
\includegraphics[height=4cm]{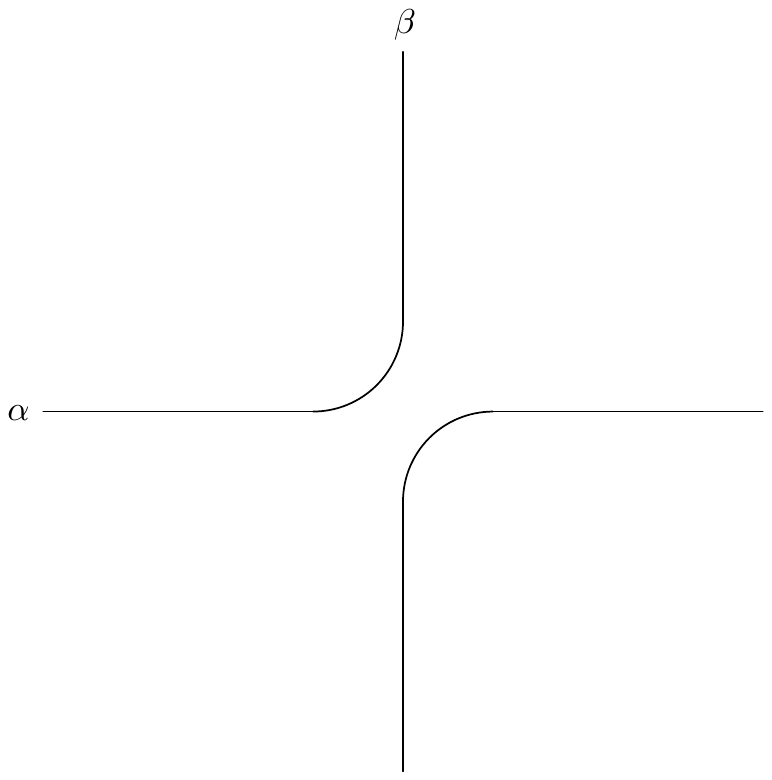} 
\caption{The Luo product: the resolution of $\a \cup \b$ at $x_j$.}
\label{Fig:resolution}
\end{figure}
  
\begin{Proposition}[\cite{Dylan} Definition 15]
\label{prop:dylantwist}
	The function $p_{i}:\S(\Sigma) \to \ZZ$ is the unique function such  that for all $\g \in \S$:
	\begin{enumerate}
	\renewcommand{\labelenumi}{(\roman{enumi})}
		\item $p_{i}(\s_{j} \cdot \g) = p_{i}(\g) + 2\delta_{ij}$;
		\item $p_{i}$ depends only on the restriction of $\g$ to the pants adjacent to $\s_{i}$; 
		\item $p_{i}(R(\g)) = -p_{i}(\g)$, where $R$ is the orientation reversing involution of $\Sigma$ defined above. 
	\end{enumerate} 
\end{Proposition}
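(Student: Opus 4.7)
The plan is to verify existence and uniqueness separately. For existence I would check that the function $p_i$ defined in Section~\ref{sec:twist2} via the algebraic intersection count $p_i(\g) = \hat i(\g \cap A_i, D_i \cap A_i)$ satisfies all three properties. For uniqueness I would show that any function satisfying (i)--(iii) is determined by its values on a small family of $R$-symmetric base curves, and that property (iii) pins those base values down to $0$.

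For existence, property (ii) is immediate from the definition, since the arcs of $\g$ inside the annulus $A_i$ are entirely determined by the arrangement of $\g$ in the pants adjacent to $\s_i$, and only these arcs enter into the computation of $\hat i(\g \cap A_i, D_i \cap A_i)$. Property (iii) follows because $R$ is orientation-reversing and preserves both $\s_i$ and $D_i$ setwise, so each individual intersection point in $A_i$ contributes with opposite index after applying $R$, and the total algebraic intersection number is negated. Property (i) is the substantive assertion. I would verify it by analyzing the Luo product locally. When $j \neq i$, one can isotope $\s_j \cdot \g$ so that the resolutions at the points of $\g \cap \s_j$ take place outside $A_i$, leaving $\g \cap A_i$ and hence $p_i$ unchanged. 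When $j=i$, I would lift to the cyclic cover $H$ of $A_i$ as in Section~\ref{sec:alternative} and observe that at each of the $q_i$ intersection points the left-turn resolution replaces an incoming strand of $\g$ by one that crosses an additional rung of the $D_i$-ladder in the positive direction; summing over the two geometric copies (or the two arcs of the single connected $D_i$, depending on whether $M(\s_i)$ is $\Sigma_{1,1}$ or $\Sigma_{0,4}$) produces a net change of exactly $+2$ in $p_i$.

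For uniqueness, given two functions $p_i, p_i'$ satisfying (i)--(iii), let $f = p_i - p_i'$. Then $f$ is invariant under Luo multiplication by any $\s_j$, depends only on $\g \cap M(\s_i)$, and satisfies $f \circ R = -f$. By Dehn's theorem applied to $M(\s_i)$, the restriction $\g \cap M(\s_i)$ is determined by the intersection numbers $q_k$ and the twists $p_k$ at the boundary pants curves; up to $R$-equivariant isotopy one can then write $\g$, inside $M(\s_i)$, as the result of a controlled sequence of Luo products $\s_k \cdot (\cdot)$ applied to an $R$-symmetric curve $\g_0$ with the same length parameters. Invariance of $f$ under these operations gives $f(\g) = f(\g_0)$, while $R$-symmetry of $\g_0$ combined with (iii) forces $f(\g_0) = -f(\g_0) = 0$.

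The main obstacle will be carrying out the two combinatorial lemmas underlying (i) and the reduction step: first, that the left-turn resolution at each intersection point of $\g$ with $\s_i$ contributes precisely $+2$ to the algebraic count against $D_i$, which requires checking signs carefully in both the $\Sigma_{1,1}$ and $\Sigma_{0,4}$ cases and for all possible arc types ($scc$ and $dcc$) in the adjacent pants; and second, that every curve in $M(\s_i)$ with prescribed length parameters can indeed be reached from an $R$-symmetric representative by iterated Luo products with the relevant $\s_k$. Both amount to a finite case analysis of arc patterns in a pair of pants, but must be handled carefully to ensure the shift axiom is interpreted consistently with our convention $i(D_i,\s_i)=2$.
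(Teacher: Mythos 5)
First, a point of reference: the paper gives no proof of this Proposition at all --- it is quoted from \cite{Dylan} (Definition 15 there) and used as a black box --- so there is no in-paper argument to measure yours against. Your existence-plus-uniqueness plan is the natural one, and your checks of (ii), (iii) and of axiom (i) in the case $j\neq i$ are fine. The problem is the key computation, axiom (i) for $j=i$. You assert that \emph{at each of the $q_i$ intersection points} the left-turn resolution makes the corresponding strand of $\g$ cross \emph{an additional rung} of the $D_i$-ladder, and then you sum over the two arcs of $D_i\cap A_i$. That accounting yields an increment of order $2q_i$, which is the increment of the \emph{full Dehn twist} (the displayed formula $p_{i}(\Tw_{\s_i}(\g)) = p_{i}(\g)+2q_{i}$ immediately after the Proposition), not the increment $2$ demanded by axiom (i). The Luo product $\s_i\cdot\g$ is only a ``$1/q_i$-th of a twist'': in the strip picture of Section~\ref{sec:alternative} it changes the matching $X_n\mapsto X'_{n+r}$ to $X_n\mapsto X'_{n+r+1}$, shifting $r=p_i/2$ by exactly $1$. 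Equivalently, the $q_i$ resolutions insert $q_i$ sub-arcs of $\s_i$ which \emph{together} make up exactly one circuit of $\s_i$, and one circuit contributes $\hat{i}(\s_i\cap A_i, D_i\cap A_i)=\pm2$ to the algebraic count in total; it is emphatically not the case that each of the $q_i$ strands picks up an extra rung-crossing. As written, your local lemma would establish the wrong shift, and the characterisation would then be inconsistent with, e.g., $p_i(D_i)=0$ and $p_i(\s_i\cdot D_i)=2$. You flag this step as the main obstacle, but the mechanism you describe for overcoming it is the wrong one.

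Two smaller gaps in the uniqueness half. The Luo product $\s_k\cdot(\cdot)$ only \emph{increases} the twist, so a curve with $p_i(\g)<0$ is not an iterated Luo product applied to an $R$-symmetric base curve; you must first replace $\g$ by $R(\g)$ and invoke (iii), or stabilise by applying $\s_i\cdot$ to both curves until the twists are non-negative (only the single twist $p_i$ matters here, by (ii), so this is easy but should be said). Second, the $R$-symmetry of the base curve $\g_0$ has to be produced geometrically --- it is exactly the DT standard position with zero twist from Section~\ref{sec:twist2} --- and does not follow from $p_i(\g_0)=0$ via (iii), since (iii) is a property of the function, not a recognition criterion for symmetric curves. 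With the corrected count for (i) and these repairs, your outline does go through.
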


We call $p_i(\g)$ the \emph{DT-twist parameter} of $\g$ about $\s_i$. Property (i)  fixes our convention noted above that the \emph{right} twist is taken  positive. Notice that  $p_i(D_i) = 0$. We also observe:

\begin{Proposition} Let $\g \in \S$. Then
$$p_{i}\left({\Tw}_{\s_i}(\g)\right) = p_{i}(\g)+2q_{i}.$$
\end{Proposition}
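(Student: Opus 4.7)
The plan is to exploit the alternative description of $p_i$ given in Section~\ref{sec:alternative}, which expresses $p_i(\g)/2$ as an integer shift $r$ in the strand matching in the $\ZZ$-cover $H$ of an annular neighbourhood $A_i$ of $\s_i$. First I would dispose of the trivial case $q_i=0$: in that case $\g$ is isotopic to a disjoint union of components missing $A_i$ together with some number of copies of $\s_i$, each of which is fixed (setwise, up to isotopy) by $\Tw_{\s_i}$, so $\Tw_{\s_i}(\g)\simeq \g$ and both sides of the identity equal $p_i(\g)$, consistent with $2q_i=0$.

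For the main case $q_i>0$, I work in the infinite strip cover $H\to A_i$ described in Section~\ref{sec:alternative}. Since $\Tw_{\s_i}$ is supported in $A_i$ and is the identity outside it, it admits a lift $\widetilde{\Tw}_{\s_i}$ to $H$ which fixes $\dd_0 H$ pointwise and acts on $\dd_1 H$ as a single fundamental translation of the deck group. Under $\widetilde{\Tw}_{\s_i}$, the labelling $X_n$ of strands meeting $\dd_0 H$ is unchanged, while each endpoint $X'_n$ on $\dd_1 H$ is shifted by one period. The key combinatorial observation is that exactly $q_i$ of the endpoints $X'_n$ lie in any single fundamental domain on $\dd_1 H$, since $q_i=i(\g,\s_i)$ strands of $\g$ cross $\s_i$ per fundamental domain of $A_i$. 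Consequently, if before the twist $X_0$ was matched with $X'_r$, then after the twist $X_0$ is matched with $X'_{r+q_i}$; applying the identity $p_i=2r$ to both $\g$ and $\Tw_{\s_i}(\g)$ gives $p_i(\Tw_{\s_i}(\g))=2(r+q_i)=p_i(\g)+2q_i$.

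The main point requiring care is the bookkeeping of signs: one must verify that a \emph{right} Dehn twist about $\s_i$ indeed shifts the strand matching in the direction of increasing index $n$, consistently with the orientation conventions of Section~\ref{sec:dehntwists} and the strand labelling scheme. A convenient sanity check is $\g=D_i$, for which $q_i(D_i)=2$ and $p_i(D_i)=0$ as noted just before the statement; the formula then predicts $p_i(\Tw_{\s_i}(D_i))=4$, which is immediate from the geometric picture since a single positive Dehn twist adds one full right winding to each of the two strands of $D_i\cap A_i$. Once the direction is fixed by this example, the combinatorial count above delivers the general formula.
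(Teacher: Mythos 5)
Your argument is correct. Note that the paper itself states this Proposition without proof (it is offered as an observation following the definition of $p_i$), so there is no official argument to compare against; your route via the strip model of Section~\ref{sec:alternative} is a complete and legitimate justification. The key points are all in place: for $q_i=0$ the twist acts trivially up to isotopy; for $q_i>0$ the lifted twist fixes $\dd_0 H$ and translates $\dd_1 H$ by one deck period, which shifts the matching index $r$ by the number of endpoints of $\g$ per period, namely $q_i$, so $p_i=2r$ increases by $2q_i$; and the sign is pinned down by your sanity check against the convention that right twists are positive (equivalently, against Proposition~\ref{prop:dylantwist}(i), which the paper says fixes that convention). For comparison, an even more direct justification comes straight from the definition $p_{i}(\g) =\hat{i}(\g \cap A_{i},D_{i} \cap A_{i})$: the right Dehn twist replaces each of the $q_i$ arcs of $\g\cap A_i$ by an arc that additionally winds once around $\s_i$, and each full loop of $\s_i$ contributes $+2$ to the algebraic intersection with $D_i\cap A_i$ since $i(D_i,\s_i)=2$ and both intersection points carry the same sign; summing over the $q_i$ strands gives the increment $2q_i$ with no separate bookkeeping of the shift direction. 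Both arguments buy the same thing; yours has the advantage of reusing the already-established identity $r=p_i/2$, while the direct one avoids the lifting step entirely.
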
 

\subsubsection{Relation to~\cite{flp}}
 
In~\cite{flp},  a curve $\g \in \S$ is parameterized by three non-negative integers $(m_i,s_i, t_i)$. These are defined as the intersection numbers of $\g$ with the three curves $K_i$, $K'_i$ and $K''_i$, namely the pants curve $\s_i$, its dual curve $D_i$, and ${\Tw}_{\s_i}(D_i)$, the right Dehn twist of $D_i$ about $\s_i$, see Figure 4 on p. 62 in~\cite{flp}. In particular:
\begin{itemize}
	\item $m_i(\g) = i(\g,K_i) = i(\g, \s_i) = q_i(\g)$
	\item $s_i(\g) = i(\g,K'_i) = i(\g, D_i) = \dfrac{|p_i(\g)|}{2}$
	\item $t_i(\g) = i(\g,K''_i) = i\left(\g, {\Tw}_{\s_i}(D_i)\right) =\bigl  |\dfrac{p_i(\g)}{2}-q_i(\g)\bigr |$
\end{itemize}
 
As proved in~\cite{flp}, the three numbers $m_i,s_i$ and $t_i$ satisfy one of the three relations $m_i = s_i +t_i$; $s_i = m_i +t_i$; $t_i = m_i +s_i$. As it is easily verified by a case-by-case analysis, we have:

\begin{Theorem}
	Each triple $(m_i,s_i, t_i)$ uniquely determines and is determined by the parameters $q_i$ and $p_i$. In fact, $q_i = m_i$ and $p_i =  2sign(p_i)s_i$
	where $$sign(p_i) = \begin{cases} +1 & \text{if\;} m_i = s_i +t_i \text{\;\;or\;} s_i = m_i +t_i;\\
	                                  -1 & \text{if\;} t_i = m_i +s_i.
		                \end{cases}$$
\end{Theorem}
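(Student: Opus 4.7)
The bulk of the content is immediate: the formulas defining $(m_i,s_i,t_i)$ give directly $m_i = q_i$ and $s_i = |p_i|/2$. Hence the only real content of the theorem is to recover the sign of $p_i$ from $(m_i,s_i,t_i)$, i.e.\ to determine which of the three FLP relations $m_i = s_i+t_i$, $s_i = m_i+t_i$, $t_i = m_i+s_i$ encodes which sign of $p_i$. The plan is therefore a direct case analysis on the sign of $p_i$, with the non-negative case further split according to the sign of $p_i/2 - q_i$, which lets us resolve the absolute value in $t_i = |p_i/2 - q_i|$ explicitly.

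In each case one writes $t_i$ as a signed linear combination of $m_i$ and $s_i$ and reads off the corresponding relation. Concretely: if $p_i \geq 0$ and $p_i/2 \geq q_i$, then $t_i = p_i/2 - q_i = s_i - m_i$, giving $s_i = m_i + t_i$; if $p_i \geq 0$ and $p_i/2 \leq q_i$, then $t_i = q_i - p_i/2 = m_i - s_i$, giving $m_i = s_i + t_i$; and if $p_i < 0$, then $-p_i/2 = s_i$ and $t_i = q_i - p_i/2 = m_i + s_i$. In all three subcases the sign of $p_i$ matches the sign prescribed by the statement. For the reverse direction, given a triple $(m_i,s_i,t_i)$ satisfying one of the FLP relations, I would set $q_i = m_i$ and $|p_i| = 2s_i$ with sign prescribed by the relation, and note that substituting back into the forward formulas recovers the same triple, establishing the bijection.

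The only real subtlety — and the main thing to verify — is consistency at the boundary where two of the FLP relations hold simultaneously. This occurs precisely when $s_i = 0$ (i.e.\ $p_i = 0$, so the sign is irrelevant and both ``applicable'' cases yield $p_i = 0$) and when $t_i = 0$ (i.e.\ $p_i/2 = q_i$, where both $m_i = s_i + t_i$ and $s_i = m_i + t_i$ reduce to $m_i = s_i$ and both prescribe sign $+1$). In each case the prescription is unambiguous, so the map $(m_i,s_i,t_i) \leftrightarrow (q_i,p_i)$ is well-defined. Beyond this bookkeeping, the argument is purely a routine unpacking of absolute values and presents no genuine obstacle.
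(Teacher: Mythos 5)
Your proposal is correct and follows essentially the same route as the paper: a direct case analysis on the sign of $p_i$, with the non-negative case split according to whether $p_i/2 \leqslant q_i$ or $q_i \leqslant p_i/2$, resolving the absolute value in $t_i = |p_i/2 - q_i|$ in each case. Your additional check of consistency where two of the FLP relations hold simultaneously ($s_i=0$ or $t_i=0$) is a small refinement the paper leaves implicit, and is welcome.
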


\begin{proof}
If $sign(p_i) = -1$, then $p_i/2 -q_i \leq 0$. So \\
$t_i = |\dfrac{p_i}{2}-q_i| = -(\dfrac{p_i}{2}-q_i) = \dfrac{|p_i|}{2} +q_i = s_i + m_i$; 
	
If $sign(p_i) = +1$, then:	
\begin{enumerate}
		\item if $\dfrac{p_i}{2} \leqslant q_i$, then $t_i = |\dfrac{p_i}{2}-q_i| = q_i - \dfrac{|p_i|}{2} = m_i - s_i$;
		\item if $q_i \leqslant \dfrac{p_i}{2}$, then $t_i = |\dfrac{p_i}{2}-q_i| = \dfrac{|p_i|}{2} - q_i = s_i - m_i$,
\end{enumerate}
as we wanted to prove.
\end{proof}

\subsection{The P-Twist}
\label{sec:pennertwist}

We now summarise Penner's definition of the twist parameter following \cite{Penner} Section 1.2.  Instead of arranging the arcs of $\g$ transverse to $\s_i$ symmetrically with respect to the involution $R$,  we now arrange them to cross $\s_i$ through a  short closed arc $w_i \subset \s_i$. There is  some choice to be made in how we do this, which leads to the difference with the definition of the previous section. It is convenient to think of $w_i$ as contained in the two `front' hexagons of the pants $P$ and $P'$ glued along $\s_i$, which we  will also refer to as the `white' hexagons. 

Precisely, for each pants curve $\s_i \in \PC$, fix a short closed arc $w_i \subset \s_i$, which we take to be symmetrically placed in the white hexagon of one of the adjacent pants $P$, midway between the two seams   which meet $\s_i \subset \dd P$. For each $\s_i$, fix an annular neighbourhood $A_i$ and extend $w_i $ into a `rectangle'
$R_i \subset A_i$ with one edge on each component of $\partial A_i$ and `parallel' to $w_i$ and two edges arcs from one component of $\partial A_i$ to the other. (See \cite{Penner} for precise details.)
 
Now  isotope $\g \in \S$ into \emph{Penner standard position} as follows. Any component of $\g$ homotopic to $\s_i$ is isotoped into $A_i$. Next, arrange $\g$ so that it intersects each $\s_i$ exactly $q_i(\g)$ times and moreover so that all points in $\g \cap \s_i$ are contained in $w_i$. We further arrange that all the twisting of $\gamma$ occurs in $A_i$. Precisely, isotope so that $\gamma \cap \partial A_i \subset \partial R_i$, in other words, so that $\gamma$ enters $A_i$ across the edges of $R_i$ parallel to $w_i$. By pushing all the twisting into $A_i$, we can also arrange that outside $A_i$, any $dcc$-arc of $\g \cap P$ does not cross  any seam of $P$.  The $scc$-arcs are slightly more complicated. Any such arc  has both endpoints on the same boundary component, let say $\dd_0P$. Give the white hexagon (the `front' hexagon in Figure~\ref{fig:Pennerstandard}) the same orientation as the surface $\Sigma$. With this orientation, one of the two other boundary components, say $\dd_1P$, is to the right of $\dd_0 P$ and the other, say $\dd_\infty P$, to the left. We isotope the $scc$-arc so that outside $A_i$  it loops round the \emph{right hand} component $\dd_1P$, cutting the seam which is to the right of the seam contained in $\dd_0 P$ exactly, see Figure~\ref{fig:Pennerstandard}. 

\begin{figure}[hbt] 
\centering 
\includegraphics[height=7cm]{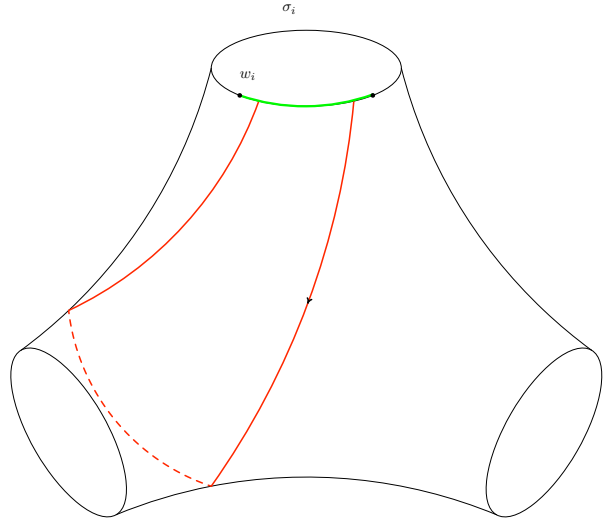} 
\caption{An scc-arc in Penner standard position.}
\label{fig:Pennerstandard}
\end{figure}

Having put $\g$ into Penner standard position, we define the \emph{Penner-twist} or \emph{$P$-twist} $\hat p_i(\g)$ as follows. Let $d_i$ be a  short arc transverse to $w_i$ with one endpoint on each of the two components of $\dd A_i$. 
\begin{itemize}
	\item If $q_{i}(\g) =i(\g,\s_i) = 0$, let $\hat p_{i}(\g) \geqslant 0$ be the number of components of $\g$ which are freely homotopic to $\s_i$. 
	\item If $q_{i}(\g) \neq 0$,  let $|\hat p_{i}(\g)|$  be the minimum number of arcs of $\g \cap A_i$ which intersect $d_i$, where the minimum is over all families of arcs properly embedded in $A_i$, isotopic to $\g\cap A_i$ by isotopies fixing $\partial A$ pointwise. Take $\hat p_{i}(\g) \geqslant 0$  if some components of $\g$ twist to the right in $A_i$ and $\hat p_{i}(\g) \leqslant 0$ otherwise. (There cannot be components twisting in both directions since $\g$ is embedded and, if there is no twisting, then $\hat p_{i}(\g) = 0$.) 
\end{itemize}

\begin{figure}[hbt] 
\centering 
\includegraphics[height=13cm]{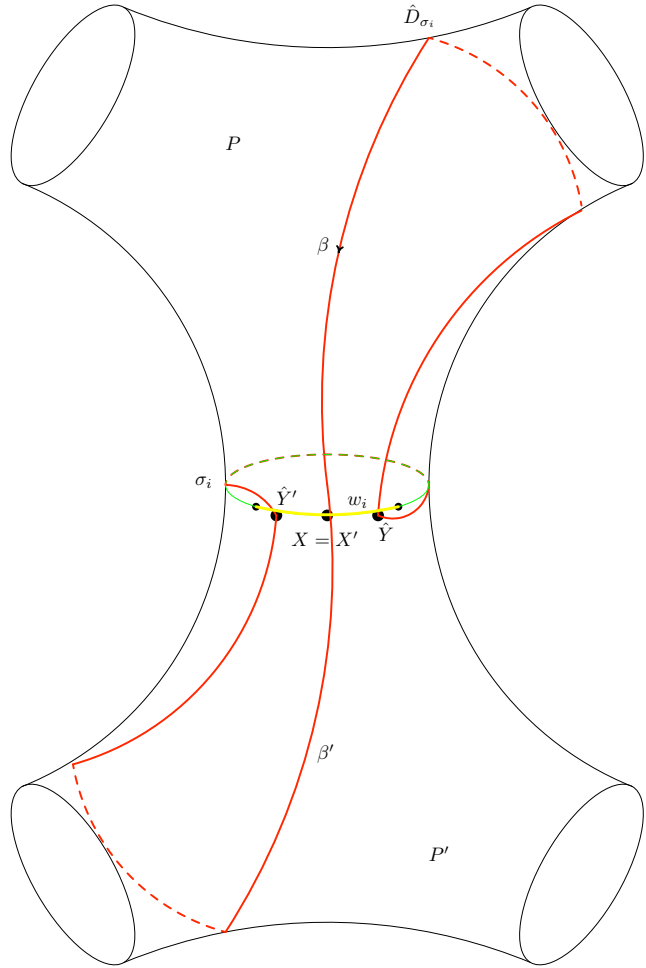} 
\caption{The dual curve $D_i$ in Penner standard position.}
\label{fig:dualPenner}
\end{figure}

\subsubsection{The dual curves in Penner position} As an example, we explain how to put the dual curves $ D_i$ into Penner standard position. This requires some care.  For clarity, we denote one component of the dual curve $D_i$ by $\hat D_i$, so that in the case in which $M(\s_i)$ is $\Sigma_{1,1}$,  we have $2\hat D_i = D_i$, while $\hat D_i = D_i$ otherwise.
 
If $M(\s_i)$ is $\Sigma_{1,1}$, there is only one arc to be glued whose endpoints we can arrange to be in $w_i$. We simply take two parallel copies of this loop $\hat D_i$ so that  $D_i = 2\hat D_i$ and $\hat p_i(D_i) = 0$.

If $M(\s_i)$ is $\Sigma_{0,4}$ then $D_i = \hat D_i$. In this case we have to match the endpoints of two $scc$-arcs  $ \beta \subset P$ and $\beta' \subset P'$, both of which have endpoints on $\s_i$. The arc $\b$ has one endpoint $X$ in the front white hexagon of $P$, which we can arrange to be in $w_i$, and the other $Y$  in the symmetrical position in the black hexagon. Label the endpoints of $\b' $ in a similar way. To get $\b \cup \b'$ into standard Penner position, we have to move the  back endpoints $Y$ and $Y'$ round to the front so that they also lie in $w_i$. Arrange $P$ and $P'$ as shown in Figure~\ref{fig:dualPenner} with the white hexagons to the front. In Penner position,  $\b$ has to loop round the right hand boundary component of $P$ so that  $Y$ has to move to a point $\hat Y$  to the right of   $X$ along $w_i$ in Figure~\ref{fig:dualPenner}. In  $P'$ on the other hand, $\b'$ has to loop round the right hand boundary component of $P'$, so that $Y'$ gets moved to a point $\hat Y'$ to the left of $X'$ on $w_i$.  Since  $X$ is identified to $X'$,  to avoid self-intersections, $\hat Y$ has  to be  joined to $\hat Y'$ by a curve which follows $\s_i$ around the back of $P \cup P'$. By inspection, we see that $\hat p_i(\hat D_i) = -1$.

\subsection{Relationship between the different definitions of twist}
\label{sec:relationtwist}

Our proof of Theorem~\ref{thm:traceformula} in Section~\ref{sec:mainthm} uses the explicit relationship between the above two definitions of the twist. The formula in Theorem~\ref{thm:twistrelation} below appears without proof in \cite{Dylan};  for completeness we supply a proof. 
 
Suppose that two pairs of pants meet along   $\s_i \in \PC$. Label  their respective boundary curves $(A,B,E)$ and $(C,D,E)$ in clockwise order, where $E = \s_{i}$, see Figure~\ref{Figure1_7}. (Some of these boundary curves may be identified in $\Sigma$.)   

\begin{Theorem} (\cite{Dylan} Appendix B)
\label{thm:twistrelation}
As above,  let $\g \in \S$ and let $q_i = q_i(\g)$, $\hat p_{i}=\hat p_{i}(\g) $ and $p_{i}= p_{i}(\g)$ respectively denote its length parameter, its P-twist and its DT-twist around $\s_i$. Then $$\hat p_{i} = \frac{p_{i}+l(A,E;B)+l(C,E;D)-q_{i}}{2},$$ where $l(X,Y;Z)$ denotes the number of strands of $\g \cap P$ running from the boundary curve $X$ to the boundary curve $Y$ in the pair of pants $P$ having   boundary curves $(X,Y,Z)$.
\end{Theorem}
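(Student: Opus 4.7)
The plan is to reduce the identity to a normalised case via the action of Dehn twists around $\s_i$ and then establish it by a direct combinatorial computation in the $\ZZ$-cover $H$ of the annular neighbourhood $A_i$.

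First, I would check that the two sides of the claimed identity transform compatibly under the Dehn twist $\Tw_{\s_i}$. The left-hand side satisfies $\hat p_i(\Tw_{\s_i}\g) = \hat p_i(\g) + q_i$ directly from Penner's definition, while on the right Proposition~\ref{prop:dylantwist} gives $p_i(\Tw_{\s_i}\g) = p_i(\g) + 2q_i$, and the arc counts $l(A,E;B)$ and $l(C,E;D)$ are preserved since $\Tw_{\s_i}$ is supported in $A_i$ and does not modify the arc types in the pants adjacent to $\s_i$. Both sides shift by $q_i$, so it suffices to verify the formula on one representative of each $\Tw_{\s_i}$-orbit.

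Second, I would lift to $H$. With $\g$ in Penner standard position and $D_i$ also placed in the Penner position described in Section~\ref{sec:pennertwist}, $p_i/2$ is the offset in the matching of endpoints of $\g \cap A_i$ on $\partial_0 H$ and $\partial_1 H$ computed using the lifts of $D_i \cap A_i$ as reference rungs (Section~\ref{sec:alternative}), while $\hat p_i$ records the same matching computed using lifts of the rectangle $R_i$ as reference. The difference $2\hat p_i - p_i$ is therefore the discrepancy between the two reference positions on $\partial_0 H \cup \partial_1 H$.

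Third, the positions of these reference points are determined by the arc structure of $\g$ in the two adjacent pants. A case analysis of the six arc types in $P \cup P'$ shows that this discrepancy decomposes as a sum of local contributions, one per endpoint of $\g$ on $E=\s_i$: each endpoint lying on an arc headed to the ``left'' boundary ($A$ in $P$, $C$ in $P'$) contributes $+\tfrac12$ to $2\hat p_i - p_i$, while each endpoint headed to the ``right'' boundary ($B$ in $P$, $D$ in $P'$), including the two endpoints of each scc-arc on $E$ (which under Penner's convention loops around the right-hand boundary), contributes $-\tfrac12$. Summing these contributions and applying the identities
$$q_i = l(A,E;B)+l(B,E;A)+2\,l(E,E;A) = l(C,E;D)+l(D,E;C)+2\,l(E,E;C)$$
yields $2\hat p_i - p_i = l(A,E;B)+l(C,E;D)-q_i$, as required. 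The case $M(\s_i) = \Sigma_{1,1}$, in which some of the boundary labels $A,B,C,D$ are identified, reduces to the same analysis once the convention on $D_i$ from Section~\ref{sec:dualcurves} is applied.

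The main obstacle is the bookkeeping in the third step: one must carefully track how the Penner convention for scc-arcs (looping around the right-hand boundary) shifts the positions of $D_i \cap \partial_0 H$ relative to the endpoints of $\g \cap \partial_0 H$, and verify that the resulting signs and halves combine as claimed into the per-endpoint contribution $\pm\tfrac12$.
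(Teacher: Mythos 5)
Your proposal is correct in outline but takes a genuinely different route from the paper. The paper proves the identity by induction on $q_i$: it checks the cases $q_i=1$ and $q_i=2$ by hand (one configuration per arc type), and in the inductive step splits a connected intersection $\g\cap M(\s_i)$ at the endpoint of its first $scc$-arc into two sub-arcs, completes each to a properly embedded arc by adjoining a doubled segment to a boundary component of $P'$, and concludes by additivity of all the quantities involved. You instead work directly in the $\ZZ$-cover of $A_i$, interpret $p_i/2$ and $\hat p_i$ as matching offsets measured against two different reference systems (the rungs of $D_i$ versus the rectangle $R_i$), and express the discrepancy $2\hat p_i-p_i$ as a sum of local contributions of $\pm\tfrac12$, one per endpoint of $\g$ on $\s_i$, determined solely by which boundary component the corresponding arc runs to; your preliminary Dehn-twist normalisation is harmless but not really needed, since the discrepancy is already insensitive to the common offset. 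What your approach buys is a conceptual explanation of the correction term: summing your local rule against $q_i=l(A,E;B)+l(B,E;A)+2l(E,E;\cdot)$ gives exactly $l(A,E;B)+l(C,E;D)-q_i$, and the additivity over components that the paper must argue for via its splitting surgery is automatic in a per-endpoint decomposition. What the paper's induction buys is that each verification is a finite, concrete picture. The one thing to be aware of is that the entire content of your argument is concentrated in the asserted $\pm\tfrac12$ rule --- in particular the claim that the two endpoints of each $scc$-arc on $E$ behave like ``rightward'' endpoints under Penner's looping convention, and that the displacement from the $R$-symmetric (DT) endpoint positions into $w_i$ is exactly half a rung spacing per endpoint regardless of how many parallel strands there are. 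Your rule does reproduce the paper's explicitly computed cases ($A$ to $C$ with $p=-1$, $\hat p=0$; the $scc$-configuration with $p=1$, $\hat p=0$), so I have no reason to doubt it, but the case-by-case verification you defer is the proof, not a technicality.
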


\begin{proof}
Let $\gamma \in \S$. We will use  a case-by-case analysis to give a proof by induction on $n = q_{i}(\g)$.  We shall assume that the modular surface $M(\s_i)$ is $\Sigma_{0,4}$, so that  $\s_{i}$ belongs to the boundary of two distinct pants $P = (A,B,E)$ and $P'=(C,D,E)$, and  leave the case in which  $M(\s_i)$ is $\Sigma_{1,1}$ to the reader. We begin with the cases $n= 1$ and $n = 2$, because $n = 2$ is useful for the inductive step.

\begin{figure}
\centering
\includegraphics[height=7cm]{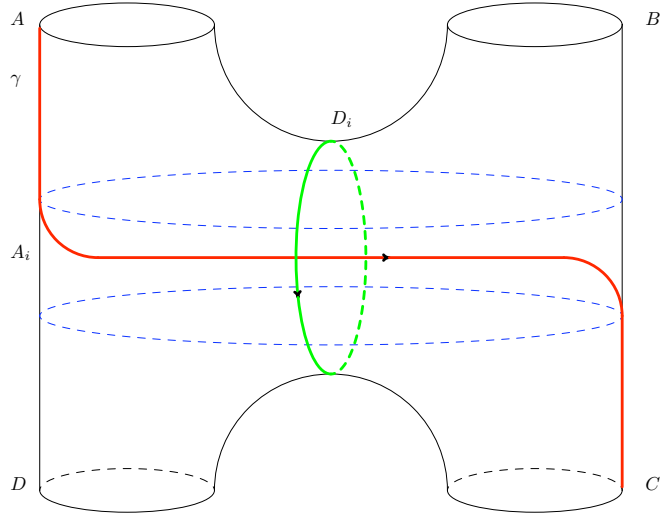}
\caption{Case $n= 1$ when $\gamma$ goes from $A$ to $C$.}
\label{Figure1_7}
\end{figure}

\medskip

When $n = 1$, the strand of $\gamma$ which intersects $\s_{i}$ must join one of the boundary components of $P$ different from $E$, to one of the two boundary components of $P'$ different from $E$. We have four cases corresponding to $\g$ joining $A$ or $B$ to $C$ or $D$. Figure \ref{Figure1_7} shows the case in which $\g$ joins $A$ to $C$. One checks easily that  $l(A,E;B) =1$, $l(C,E;D) =1$ while $\hat p = 0, p=-1$ and $q=1$, verifying the formula in this case. The other cases are similar.

\medskip

Now consider   $n = 2$, so that $\g \cap  M(\s_i)$ may have either one or two connected components. If there are two components, then each one was already analysed  in the case $k=1$, and the result follows by the additivity of the quantities involved. 

If $\g \cap  P$ is connected, we must have (in one of the pants $P$ or $P'$) an $scc$-arc  which has both its endpoints on $\s_{i}$. Without loss of generality we may suppose that this arc is in $P$. Choose an orientation on $\gamma$ and call its initial and final points $X_{1}$ and $X_{2}$ respectively. The endpoints of this arc must be joined to the boundary components $C$ or $D$ of $P'$. Figure \ref{Figure1_8} illustrates the case in which $X_{1}$ is joined to $D$, while $X_{2}$ is joined to $C$. Then $l(A,E;B) =0$, $l(C,E;D) =1$ while $\hat p = 0, p=1$ and $q=2$, verifying the formula in this case. The other cases are similar.

\begin{figure}
\centering
\includegraphics[height=7cm]{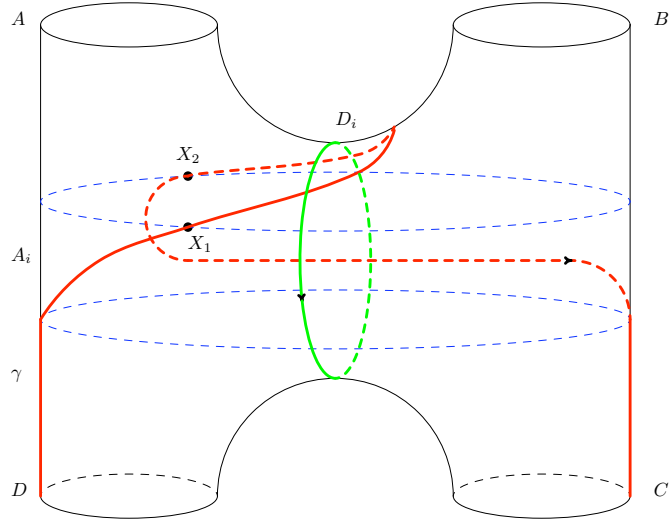}
\caption{Case $n = 2$ when $X_{1}$ is joined to $D$ and $X_{2}$ is joined to $C$. }
\label{Figure1_8}
\end{figure}

\medskip
 
Suppose now that the statement is true for any $n < q_i$. If  $\g \cap  M(\s_i)$ is not connected, then each connected component intersects  $\s_{i}$ less then $n$ times  and the result follows from the inductive hypothesis and the additivity of the quantities involved. 

If $\hat \gamma= \g \cap   M(\s_i)$ is connected, then there will be an arc which has both its endpoints on  $\s_{i}$. Choose an orientation on $\gamma$. Without loss of generality, we can suppose that the first such arc is contained in $P$. Let  $X_{1}$ and $X_{2}$ be its two ordered endpoints. Then $X_{2}$ splits $\hat \gamma$ into two oriented curves $\alpha$ and $\beta$, where $\alpha$ contains only one arc with both  endpoints in $\s_{i}$, while $\beta$ has $n-1$ arcs of this kind. Now we modify $\alpha$ and $\beta$, in such a way that they will became properly embedded arcs in $ M(\s_i)$, that is, arcs with endpoints on  $\dd ( M(\s_i)) \subset \Sigma$. We do this by adding a segment for each one of $\alpha$ and $\beta$ from $X_{2}$ to one of the boundary components $C$ or $D$ of $P'$. In order to respect the orientation of $\alpha$ and $\beta$ we add the segment twice, once with each  orientation. This will not change the quantities involved. For example, suppose we add two segments from $X_{2}$ to $C$. This creates two oriented curves $\alpha'$ and $\beta'$ in $ M(\s_i)$ such that $$t_{i}(\gamma) = t_{i}(\alpha \cup \beta) = t_{i}(\alpha' \cup \beta') = t_{i}(\alpha') + t_{i}(\beta')$$ and the conclusion now follows from the inductive hypothesis.
\end{proof}

\begin{Remark} {\rm There is a nice formula for the number $l(X,Y,Z)$ in the above theorem. Given $a, b \in \R$, let $\max {(a,b)} = a \vee b$ and $\min{(a,b)} = a \wedge b$. Suppose that a pair of pants has boundary curves $X,Y,Z$ and that $\g \in \SC$. Let $x = i(\g,X)$ and define $y,z$ similarly. As above let  $l(X,Y;Z)$  denote the number of strands of $\g$ running from $X$ to  $Y$. Then (see~\cite{Dylan} p. 20) $$l(X,Y;Z) = 0 \vee \left(\frac{x+y-z}{2} \wedge x \wedge y \right).$$}
\end{Remark}

\section{The gluing construction}
\label{sec:gluing}

As explained in the introduction, the representations which we shall consider are holonomy representations of projective structures on $\Sigma$, chosen so that the holonomies of all the loops $\s_j \in \PC$ determining the pants decomposition $\P$ are parabolic. The interior of the set of free, discrete, and geometrically finite representations of this form is  called the \emph{Maskit embedding} of $\Sigma$, see Section~\ref{sec:maskit} below.

The construction of the projective structure on $\Sigma$ is based on Kra's plumbing construction~\cite{kra}, see  Section~\ref{sec:plumbing}. However it will be convenient to describe it in a somewhat different way. The idea is to manufacture $\Sigma$ by gluing triply punctured spheres across punctures. There is one triply punctured sphere for each pair of pants $P \in \P$, and  the gluing across the pants curve $\s_j$ is implemented by a specific projective map depending on a parameter $\tau_j \in \CC$. The  $\tau_j$ will be the parameters of the resulting holonomy representation $\rho: \pi_1(\Sigma) \to PSL(2,\C)$.  

More precisely, we first fix an identification of the interior  of each pair of pants $P_i$  to a standard triply punctured sphere $\mathbb P$. We endow $\mathbb P$ with the projective structure coming from the unique hyperbolic metric on a triply punctured sphere. The gluing is carried out by deleting open punctured disk neighbourhoods of the two punctures in question and gluing horocyclic annular collars round the resulting two boundary curves, see Figure~\ref{figure2_4}. 

\begin{figure}[hbt] 
\centering 
\includegraphics[height=2.9cm]{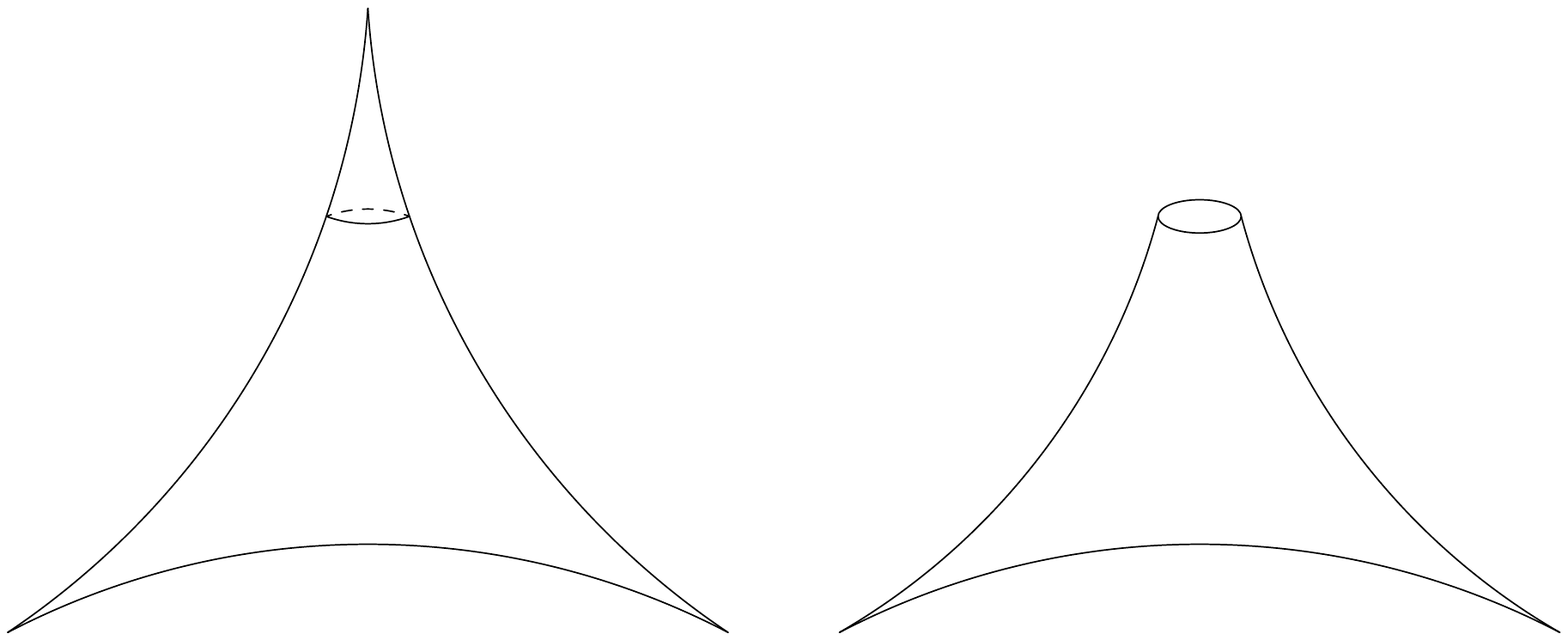} \hspace{0.4cm} \includegraphics[height=2.9cm]{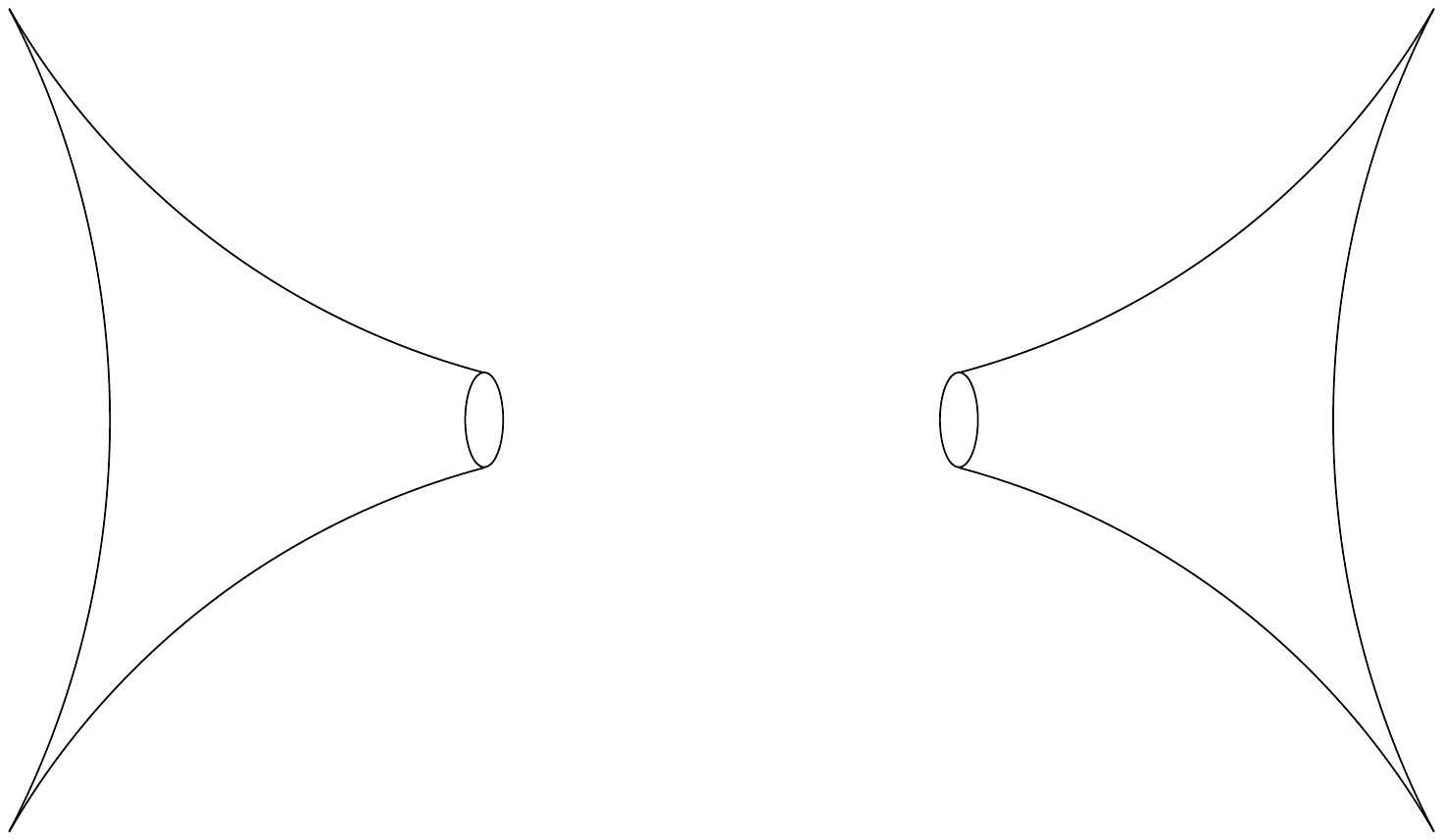} 
\caption{Deleting horocyclic neighbourhoods of the punctures and preparing to glue.}
\label{figure2_4}
\end{figure}

\subsection{The\;gluing}
\label{sec:moregluing}

To describe the gluing  in detail, first recall (see eg~\cite{Indra} p.\,207) that any triply punctured sphere is isometric to the standard triply punctured sphere $\mathbb P = \HH /\Gamma$, where $$\Gamma = 
\Bigl <\begin{pmatrix}
	1  &  2  \\
	0 &  1 \\ 
	\end{pmatrix},
	 	\begin{pmatrix}
	1  &  0  \\
	2 &  1 \\ 
	\end{pmatrix} \Bigr >.$$
Fix a standard fundamental domain for $\Gamma$ as shown in Figure~\ref{figure4_2}, so that the three punctures of $\mathbb P $ are naturally labelled  $0,1,\infty$. Let $\Delta_0$ be the ideal triangle with vertices $ \{0,1,\infty\}$, and $\Delta_1$ be its reflection in the imaginary axis. We sometimes refer to $\Delta_0$ as the white triangle and $\Delta_1$ as the black.

\begin{figure}[hbt] 
\centering 
\includegraphics[height=5cm]{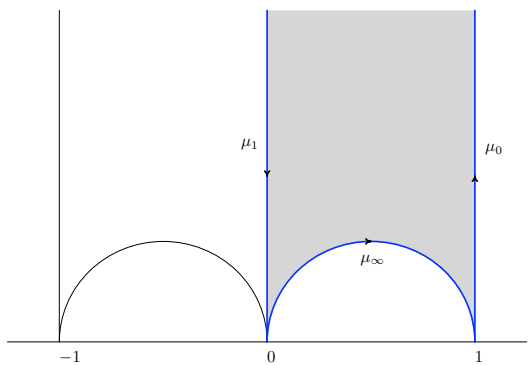} 
\caption{The standard fundamental domain  for $\mathbb P$. The shaded triangle is the white triangle $\Delta_0$.}
\label{figure4_2}
\end{figure}

\begin{figure}[hbt] 
\centering 
\includegraphics[height=15cm]{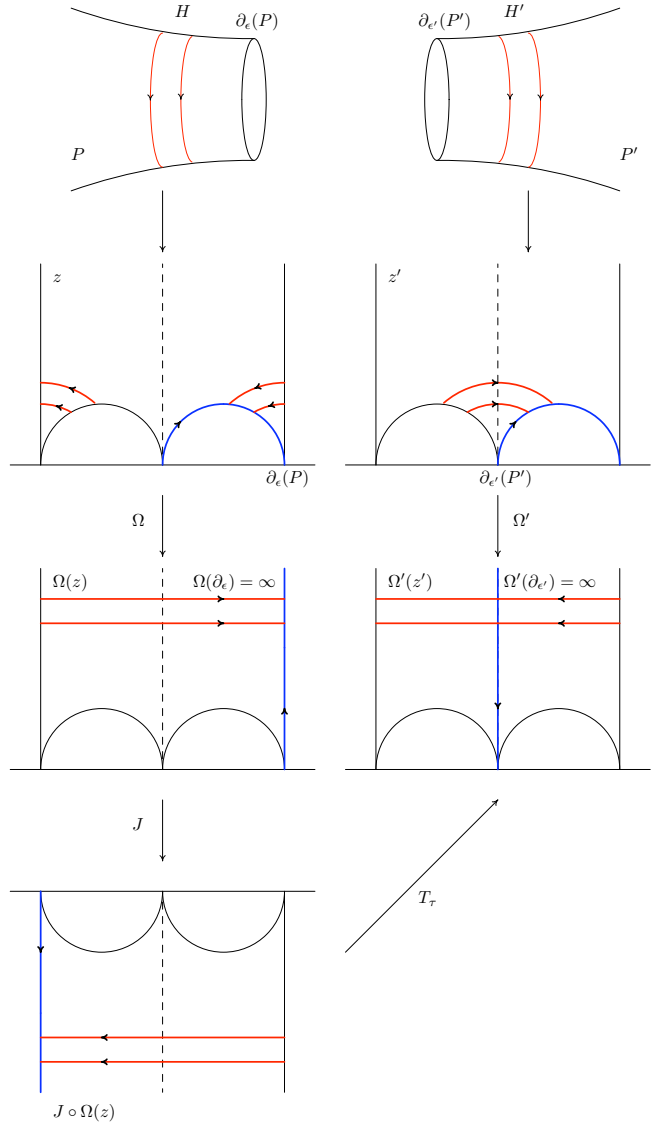} 
\caption{The gluing construction in the case $\e = 1$ and $\e' = 0$.}
\label{figure4_10}
\end{figure}

\begin{figure}[hbt] 
\centering 
\includegraphics[height=7cm]{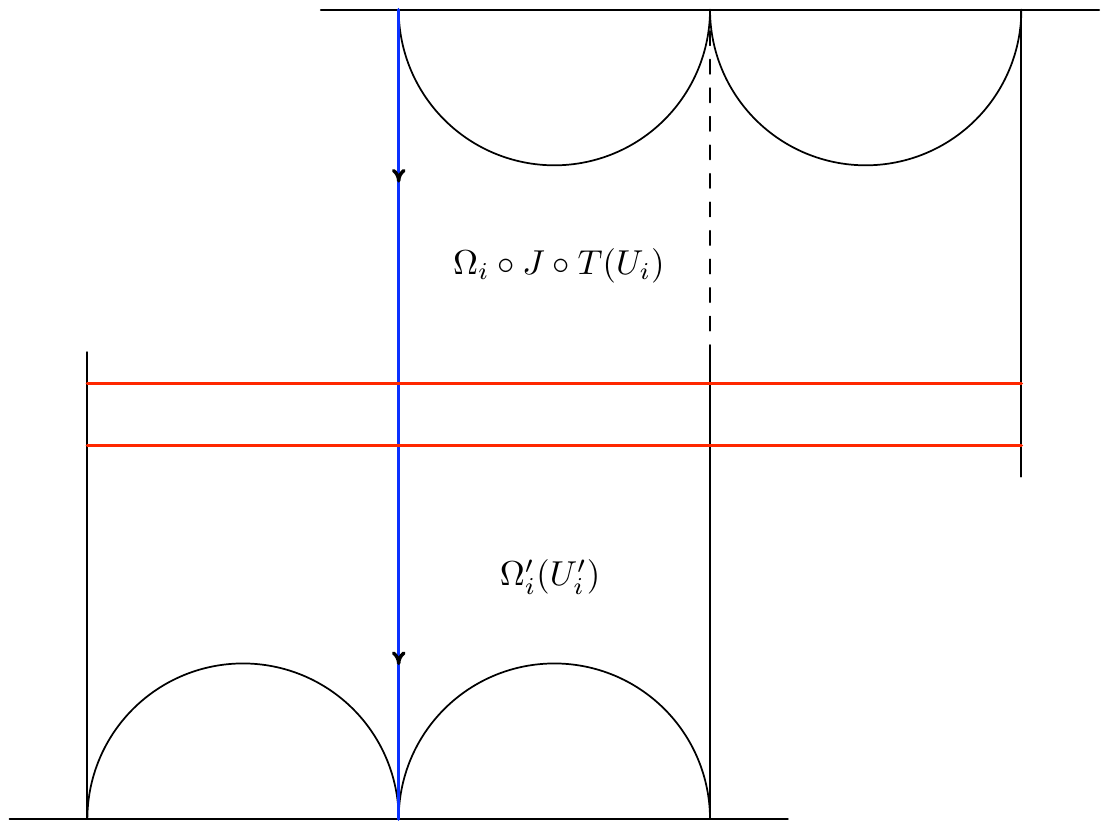} 
\caption{The gluing along horocycles where $U_i$, $U'_i$ are images of $P$, $P'$ with deleted horoballs.}
\label{figure4_11}
\end{figure}

With our usual pants decomposition  $\P$, fix homeomorphisms $\Phi_i$ from the interior of each pair of pants $P_i$ to $\mathbb P$. This identification induces a labelling of the three boundary components of $ P_{i}$ as $0, 1, \infty$ in some order, fixed from now on. We denote the boundary labelled $\e \in \{0,1,\infty \}$ by $\dd_{\e} P_{i}$. The identification also induces a colouring of the two right angled hexagons whose union is  $P_i$, one being white and one being black. Suppose that  the pants $P, P' \in \P$ are adjacent along the pants curve $\s$ meeting along boundaries  $\dd_{\e}P$ and $\dd_{\e'}P'$.   (If $P =P'$ then clearly $\e \neq \e'$.) The gluing across $\s$ will be described by a complex parameter $\tau$ with $\Im \tau >0$, called the \emph{plumbing parameter} of the gluing. We first describe the gluing in the case $\e = \e' = \infty$.
   
\medskip 
   
Arrange the pants with $P$ on the left as shown in Figure~\ref{figure4_10}. Take two copies $\mathbb P, \mathbb P'$ of $\mathbb P$. Each of these is identified with $\HH/\Gamma$ as described above. We refer to the copy of $\HH$ associated to $\mathbb P'$ as $\HH'$ and denote the natural  parameters in $\HH, \HH'$ by  $z, z'$ respectively.  Let $\zeta$  and $ \zeta'$  be the projections $\zeta: \HH \to \mathbb P$  and  $\zeta': \HH' \to \mathbb P'$ respectively.

Let $h= h(\tau) $ be the loop  on $\mathbb P $ which lifts  to the horocycle  $ \Im z= \frac{\Im \tau}{2}$ on $\HH$.  For  $\eta>0$, $H = H(\tau,\eta) = \frac{\Im \tau-\eta}{2} \leqslant \Im z \leqslant  \frac{\Im \tau+\eta}{2} \subset \HH$ is a horizontal strip which projects to an annular neighbourhood $A$ of $h \subset \mathbb P$. Let $S \subset \mathbb P$ be the surface $\mathbb P$ with the  projection of the  open horocyclic neighbourhood $\Im z >  \frac{\Im \tau+\eta}{2}$ of $\infty$ deleted. Define $h', S'$ and $A'$ in a similar way.  We are going to glue $S$ to $S'$ by matching $A $ to $A'$  in such a way that $h$ is identified to $h'$ with orientation reversed, see Figures~\ref{figure4_10} and \ref{figure4_11}. The resulting homotopy class of the loop $h$ on the glued up surface (the quotient of the disjoint union of the surfaces $S_i$ by the attaching maps across the $A_i$) will be in the homotopy class of $\s$.    To keep track of the marking on $\Sigma$, we will do the gluing  on the level of the $\ZZ$-covers of $S, S'$ corresponding to $h,h'$, that is, we will actually glue the strips $H$ and $H'$. 

As shown in Figure~\ref{figure4_10}, the deleted punctured disks are on opposite sides of $h$  in $S$ and $h'$ in $S'$. Thus we first need to reverse the direction in one of the two strips $H, H'$. Set
\begin{equation}
\label{eqn:standardsymmetries1}
J = \begin{pmatrix}
	-i  &  0  \\
	0 &  i   
	\end{pmatrix},  \ \ 
T_{\tau}=  \begin{pmatrix}
1 & \tau  \\
	0 &  1 \end{pmatrix}.
\end{equation}
We  reverse the direction in $H$ by applying the map $J(z) = -z$ to $\HH$.  We then glue $H$ to $H'$ by  identifying $z \in H$ to $z' = T_{\tau} J(z) \in H'$. Since both $J$ and $T_{\tau}$ commute with the holonomies $z \mapsto z+2$ and $z' \mapsto z'+2$ of the curves $h,h'$, this identification descends to a well defined identification of  $A$ with $A'$, in which the `outer' boundary $\zeta(\Im z) = (\Im \tau+\eta)/2$ of $A$ is identified to the `inner' boundary $\zeta'(\Im z') =(\Im \tau-\eta)/2$ of $A'$. In particular, $h$ is glued to $h'$ reversing orientation. 

\medskip 
    
Now we treat the general case in which $P$ and $P'$ meet along punctures with arbitrary labels  $\e, \e' \in \{0,1,\infty\}$. As above, let $\Delta_0 \subset \HH$ be the ideal `white' triangle with vertices $0,1,\infty$. Notice that there is a unique orientation preserving symmetry $\Omega_{\a}$  of $\Delta_{0}$ which sends  the vertex $\a \in \{0,1,\infty\}$ to $ \infty$: 
\begin{equation}
\label{eqn:standardsymmetries}
\Omega_{0} = \begin{pmatrix}
	1  &  -1  \\
	1 &  0   
	\end{pmatrix},  \ \ 
 \Omega_{1} =  \begin{pmatrix}
0 &  -1  \\
	1 &  -1 \end{pmatrix},  \ \ 
	\Omega_{\infty} = \Id = \ \begin{pmatrix}
	1  &  0  \\
	0 &  1 	\end{pmatrix}.
\end{equation}

To do the gluing,  first move $\e, \e'$ to $\infty$ using the maps $\Omega_{\e}, \Omega_{\e'}$ and then proceed as before. Thus the gluing identifies    $ z  \in H$ to $ z' \in H'$ by the formula 
\begin{equation}
\label{eqn:gluing} \Omega_{\e'}(z')= T_{\tau} \circ J \left(\Omega_{\e}(z)\right),
\end{equation}
see Figure~\ref{figure4_11}.

\medskip 
  
Finally, we carry out the above construction  for each pants curve $\s_i \in \PC$. To do this, we  need to  ensure that the annuli corresponding to the three different punctures  of a given $\mathbb P_i$ are disjoint. Note that the condition $\Im \tau_i > 2 $, for all $i=1,\ldots,\xi$, ensures that the three curves $h_0$, $h_1$ and $h_\infty$ associated to the three punctures of $P_i$ are disjoint in $\mathbb P$. Under this condition, we can clearly choose $\eta>0$ so that their annular neighbourhoods are disjoint, as required. 

\begin{Remark}\label{Rem:black-white}{\rm 
In the above construction of $\Sigma_0$, we  glued the two white triangles $\Delta_{0}(P)$ and $\Delta_{0}(P')$. Suppose that we wanted instead to glue the two black triangles $\Delta_{1}(P)$ and $\Delta_{1}(P')$.  This can be achieved by replacing the parameter $\tau$ with $\tau -2$. However, following our recipe, it is not possible to glue a white triangle to a black one, because the black triangle is to the right of both the outgoing and incoming lines while the white triangle is to the left. }	
\end{Remark}
   
\subsubsection{Independence of the direction of the travel}

The recipe for gluing  two pants apparently depends on the direction of travel across their common boundary. The following lemma  shows that, in fact, the gluing in either direction is implemented by the same recipe and uses the same parameter $\tau$.

\begin{Lemma}
\label{lemma:independence}
Let pants $P$ and $P'$ be glued across a common boundary $\s$, and suppose the gluing used when travelling from $P$ to $P'$ is implemented by~\eqref{eqn:gluing} with the  parameter $\tau $. Then the gluing when travelling in the opposite direction from $P'$ to $P$ is also implemented by~\eqref{eqn:gluing} with the same parameter $\tau$.
\end{Lemma}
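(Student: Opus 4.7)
The plan is to invert the gluing identity \eqref{eqn:gluing} and show that, after rearrangement, the result has exactly the same algebraic form with the same plumbing parameter $\tau$, once the roles of $(z,\e)$ and $(z',\e')$ are interchanged. Since \eqref{eqn:gluing} is an equality of Möbius transformations evaluated on $z,z'$, the argument will be purely algebraic and very short.

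First I would solve \eqref{eqn:gluing} for $\Omega_{\e}(z)$. Applying $(T_{\tau} \circ J)^{-1} = J^{-1}\circ T_{-\tau}$ to both sides, and using that $J(w) = -w$ is an involution in $PSL(2,\CC)$ (so $J^{-1} = J$), gives
$$\Omega_{\e}(z) = J \circ T_{-\tau}\bigl(\Omega_{\e'}(z')\bigr).$$

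The crux is then the identity $J \circ T_{-\tau} = T_{\tau} \circ J$ in $PSL(2,\CC)$, which follows from the direct computation $J(T_{-\tau}(w)) = -(w-\tau) = T_{\tau}(-w) = T_{\tau}(J(w))$ for every $w \in \CC$. Substituting this into the previous display yields
$$\Omega_{\e}(z) = T_{\tau} \circ J\bigl(\Omega_{\e'}(z')\bigr),$$
which is precisely \eqref{eqn:gluing} with the roles of $(z,\e)$ and $(z',\e')$ swapped and the same parameter $\tau$, that is, the gluing recipe traversed from $P'$ to $P$.

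I do not anticipate any real obstacle; the whole statement reduces to the two algebraic facts $J^{-1} = J$ and $J \circ T_{-\tau} = T_{\tau} \circ J$ in $PSL(2,\CC)$. The only minor point worth verifying is geometric consistency, namely that under the inverted gluing the outer boundary $\Im z' = (\Im\tau+\eta)/2$ of $H'$ is identified with the inner boundary $\Im z = (\Im\tau-\eta)/2$ of $H$, so that $h'$ is again glued to $h$ with reversed orientation. This is automatic, since inverting a bijective identification interchanges its roles on source and target.
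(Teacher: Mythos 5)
Your proof is correct and is essentially the paper's own argument: both reduce to the single algebraic fact that $T_{\tau}\circ J$ is an involution (the paper phrases this as requiring $T_{\tau'}JT_{\tau}J(X)=X$ and solving to get $\tau'=\tau$, while you compute $(T_{\tau}\circ J)^{-1}=J\circ T_{-\tau}=T_{\tau}\circ J$ directly). The reduction to the case $\e=\e'=\infty$ via the maps $\Omega_{\e}$ is also how the paper handles the general labels, so there is no substantive difference.
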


\begin{proof}
Using the maps $\Omega_{\e}$ if necessary, we may, without loss of generality, suppose we are gluing the boundary  $\dd_{\infty}P  $ to $\dd_{\infty}P'$. (Note that $\Omega_{\infty} = \Id$.) By definition, to do this we identify the horocyclic strip $H  \subset \HH $ to the strip $H'\subset \HH'$ using the map $T_{\tau} \circ J$. 

Fix a point $X \in h $. The gluing sends $X$ to $T_{\tau} J(X) \in h'$. The gluing in the other direction from $P'$ to $P$  reverses orientation of the strips to be glued and is done using a translation $T_{\tau'}$ say. To give the same gluing we must have $T_{\tau'}JT_{\tau}J(X) = X $. This gives $ \tau' -(-X+\tau) = X$ which reduces to $\tau= \tau'$ as claimed.
\end{proof}

\subsection{Marking and Dehn twists}
\label{sec:marking2}

Write $ \ttau = ( \tau_1,\ldots,\tau_{\xi}) \in \C^{\xi} $, and denote by  $\Sigma(\ttau )$ the surface obtained by the gluing procedure described above with parameter $\tau_i$ along curve $\s_i$. To complete the description of the projective structure, we have to specify a marking on  $ \Sigma(\ttau )$. To do this we have to specify a base structure on a fixed surface $\Sigma_0$, together  with a homeomorphism $\Phi_{\ttau}: \Sigma_0 \to \Sigma (\ttau)$. 

We first fix the base structure on $\Sigma_0$, together with a marking given by a family of dual curves $D_i$ to the pants curves $\s_i$. Let  $\mu_{\e} \subset \HH$  be the unique oriented geodesic from $\e +1$ to $\e +2 $, where $\e$ is in the cyclically ordered set $\{0,1,\infty\}$, see Figure~\ref{figure4_2}.  The lines $\mu_{\e}$ project to the seams of $\mathbb P$. We call $\mu_0 $ (from $1$ to $\infty$) and $\mu_1$  (from $\infty$ to $0$) respectively the \emph{incoming} and the \emph{outgoing} strands (coming into and going out from the puncture) at $\infty$, and refer to their images under the maps $\Omega_{\e}$ in a similar way. For $\tau \in \C$, let $X_{\infty} (\tau) = 1+ \Im \tau/2$  be the point at which the incoming line $\mu_0 $ meets the horizontal horocycle $\Im z =  \Im \tau/2$ in $\HH$, and let $Y_{\infty} (\tau) = \Im \tau/2$ be the point the outgoing line $\mu_1$ meets  the same horocycle.  Also define  $X_{\e} (\tau)=\Omega_{\e}(X_{\infty})$ and $Y_{\e} (\tau)=\Omega_{\e}(Y_{\infty})$. Now pick a pants curve $\s$ and, as usual, let  $P,P' \in \P$ be its adjacent pants  in $\Sigma$, to be glued across boundaries $\dd_{\e}P$ and $\dd_{e'}P'$.  Let $X_{\e}(P), X_{\e}(P')$ be the points corresponding to $X_{\e}(\tau), X_{\e}(\tau)$ under the identifications $\Phi,\Phi'$ of $P,P'$ with $\mathbb P$, and similarly for $Y_{\e}(P), Y_{\e}(P')$. The base structure $\Sigma_0$ will be one in which the identification~\eqref{eqn:gluing} matches the point $X_{\e}(P,\tau)$ on the incoming line across $\dd_{\e}P$ to the point $Y'_{\e'}(P')$ on the outgoing line to $\dd_{\e'} P'$. Referring to the gluing equation~\eqref{eqn:gluing}, we see that this condition is fulfilled precisely when $\Re \tau = 1$.

We define the structure on $\Sigma_0$ by specifying $\Re \tau_i = 1$ for $i = 1,\ldots,\xi$. The imaginary part of $\tau_i$ is unimportant; for definiteness we can fix $\Im \tau_i  = 4$. Now note that the reflection $z \mapsto -\bar z$ of $\HH$ induces an orientation reversing isometry of $\mathbb P$ which fixes its seams; with the gluing matching seams as above this extends, in an obvious way, to an orientation reversing involution of $\Sigma_0$. Following (a) of Section~\ref{sec:marking}, this specification is equivalent to a specification of a marking on $\Sigma_0$.  

Finally, we define a marking on the  surface $\Sigma(\ttau)$. After applying  a suitable stretching to each pants to adjust the lengths of the boundary curves, we can map $  \Sigma_0 \to \Sigma(\ttau)$  using a map which is the Fenchel-Nielsen twist $FN_{\Re \tau_i}$  on an annulus around   $\s_i \in \PC, i=1,\ldots, \xi$ and the identity elsewhere, see Section~\ref{sec:marking}. This gives a well defined homotopy class of homeomorphisms $\Psi_{\ttau}: \Sigma_0 \to \Sigma(\ttau)$.
 
With this description, it is easy to see that $\Re \tau_i$ corresponds to twisting about $\s_i$; in particular,  $\tau_i \mapsto \tau_i + 2$ is a full right Dehn twist about $\s_i$. The imaginary part $\Im \tau_i$ corresponds to vertical translation and has the effect of scaling the lengths of the $\s_i$.

\subsection{Projective structure and holonomy}
\label{sec:projstructure}

The above construction gives a way to define the developing map from the universal cover $\tilde \Sigma$ of $\Sigma$ to $\CC$.  To do this we have to describe the developing image of any path $\g$ on $\Sigma$. The path goes through a sequence of pants $P_{i_1}, P_{i_1},\ldots, P_{i_n}$  such that each adjacent pair $P_{i_j},P_{i_{j+1}}$ is glued along an annular neighbourhood $A(\s_{i_j})$ of the pants curve $\s_{i_j}$ which forms the common boundary of $P_{i_j}$ and $P_{i_{j+1}}$. Since all the maps involved in this gluing are in $PSL(2,\C)$, it is clear that if $\g$ is a closed loop, then the holonomy of $\g$ is in $PSL(2,\C)$. Thus we get a representation $\pi_1(\Sigma) \to PSL(2,\C)$ which can be checked to be independent of $\g$ up to homotopy in the usual way.

Now we can justify our claim that our construction gives a projective structure on $\Sigma$. Recall that a complex projective structure on $\Sigma$ means an open covering of $\Sigma$ by simply connected sets $U_i$, such that $U_i\cap U_j$ is connected and simply connected,  together with homeomorphisms $ \Phi_{i}: U_{i} \to V_i \subset \hat{\C}$, such that the overlap maps  $ \Phi_{i} \circ \Phi_{j}^{-1} : \Phi_{j}(U_{i} \cap U_{j}) \to \Phi_{i}(U_{i} \cap U_{j})$ are in  ${PSL}(2,\C)$.
 
Given the developing map $\Psi: \tilde \Sigma \to \hat\C$ from the universal covering space $\tilde \Sigma$ of $\Sigma$ into the Riemann sphere $\hat \C$, we can clearly cover $\Sigma$ by open sets $U_i$ such that each component $W$ of the lift of $ U_i$ to $\tilde \Sigma$ is homeomorphic to $U_i$ and such that the restriction $\Psi|_{W}$ is a homeomorphism  to an open set $V  \subset \hat \C$.  For each $ U_i$, pick one such component. Since any two lifts differ by a covering map, and since $U_i \cap U_j \neq \emptyset$ implies there are lifts which intersect, the overlap maps will always be in the covering group which by our construction is contained in ${PSL}(2,\C)$.
   
In terms of the projective structure, the holonomy representation $\rho: \pi_1(\Sigma) \to PSL(2,\C)$ is described as follows. A path $\gamma$ in $\Sigma$ passes through an ordered chain of charts  $U_{0}, \ldots, U_{n}$ such that $U_{i} \cap U_{i+1} \neq \varnothing$ for every $i =0, \ldots,n-1$. This gives us  the overlap maps $R_{i} = \Phi_{i} \circ \Phi_{i+1}^{-1}$ for  $i =0, \ldots, n-1$.  The sets $V_i$ and $R_{i}(V_{i+1})$ overlap in $\hat \C$ and hence the developing image of $\tilde \gamma$ in $\Chat$ passes through in order the sets $V_0, R_{0}(V_{1}), R_{0}R_1(V_{2})\ldots,  R_{0}\cdots R_{n-1} (V_n)$. If  $\gamma$ is closed, we have $U_{n} = U_{0}$ so that $V_0 = V_n$. Then, by definition, the holonomy of  the homotopy class $ [\gamma]  $ is  $\rho([\gamma]) = R_{0}\cdots R_{n-1} \in {PSL}(2,\C)$. 
 
Notice that our construction effectively takes the charts to be the maps $ P_i \to \HH$ which identify $P_i$ with the standard fundamental domain $\Delta \subset \HH$ via the map $\Phi_i: P_i \to \mathbb P$.  Strictly speaking, we should divide each  $P_i$ into two simply connected sets by cutting along its seams, so that each chart  maps to one or other of the standard ideal triangles $\Delta_0$ or $\Delta_1$. The details of how this works will be discussed in Section~\ref{sec:examples}.

As a consequence of the construction we note the following fact which underlies  the connection with the Maskit embedding (Section~\ref{sec:maskit}), and which (together with the definition of the twist in the case $q_i = 0$) proves the final statement of Theorem~\ref{thm:traceformula}. 

\begin{Lemma}\label{Parab}
Suppose that $\gamma \in \pi_1(\Sigma)$ is a loop homotopic to a pants curve $\s_{j} $. Then  $\rho(\g)$ is parabolic and $\tr \rho(\g) = \pm 2$.
\end{Lemma}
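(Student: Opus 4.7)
The strategy is to isotope $\gamma$ into a narrow collar of $\s_j$ lying entirely inside one of the adjacent pairs of pants, so that $\rho(\gamma)$ is computed from a single chart of the projective structure and reduces to the cusp holonomy in the standard triply punctured sphere $\mathbb P = \HH/\Gamma$. Since every cusp of $\mathbb P$ is stabilised by a parabolic element of $\Gamma$ of trace $\pm 2$, the conclusion will follow at once.

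Concretely, I would pick a pair of pants $P \in \P$ with $\s_j \subset \dd_\e P$ for some $\e \in \{0,1,\infty\}$ and recall from Section~\ref{sec:gluing} the identification $\Phi : P \to \mathbb P$, under which $\s_j$ corresponds to a small loop around the puncture labelled $\e$. Using the orientation preserving symmetry $\Omega_\e$ from~\eqref{eqn:standardsymmetries} to move $\e$ to $\infty$, the lift of the loop to $\HH$ is a horizontal arc from some $z_0$ to $z_0 + 2$, whose covering translation is the parabolic element
\[
T_2 \;=\; \begin{pmatrix} 1 & 2 \\ 0 & 1 \end{pmatrix} \in \Gamma.
\]
The holonomy computed in this one chart is therefore the conjugate $\Omega_\e^{-1} T_2 \Omega_\e$, which is parabolic with trace $\pm 2$.

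The main thing to justify is that $\gamma$ can indeed be represented so as to avoid all of the gluing annuli, so that no factors of $T_{\tau_i} \circ J$ from~\eqref{eqn:gluing} enter the product of chart transitions defining $\rho(\gamma)$. This is ensured by the standing hypothesis $\Im \tau_i > 2$ at the end of Section~\ref{sec:moregluing}, under which the three horocyclic neighbourhoods of the cusps of $\mathbb P$ used for plumbing are pairwise disjoint inside $P$; hence a sufficiently narrow collar of $\s_j$ sits entirely inside a single chart. Granted this, the computation of $\rho(\gamma)$ involves no $\tau$-dependent factors at all, so $\rho(\gamma)$ is parabolic with $\tr \rho(\gamma) = \pm 2$. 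This simultaneously establishes the final ``$q=0$'' clause of Theorem~\ref{thm:traceformula}.
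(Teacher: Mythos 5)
Your proposal is correct and follows exactly the reasoning the paper intends: the paper offers no explicit proof of Lemma~\ref{Parab}, presenting it as an immediate consequence of the construction, and the content is precisely that a loop homotopic to $\s_j$ lies in a single pair of pants, so its holonomy is a peripheral element of the triply punctured sphere group $\Gamma$ conjugated by $\Omega_\e$, hence parabolic of trace $\pm 2$. Your computation is confirmed by the explicit matrices $\rho(\eta_0)$, $\rho(\eta_1)$, $\rho(\eta_\infty)$ in Section~\ref{sec:onepants}, each of which has trace $\pm 2$, and by the conjugation-invariance of the trace under change of base point noted after equation~\eqref{eqn:holonomy}.
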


\subsection{Relation to Kra's plumbing construction}
\label{sec:plumbing}

Kra in~\cite{kra} uses essentially the above construction to manufacture surfaces by gluing  triply punctured spheres across punctures, a procedure which he calls \emph{plumbing}. Plumbing is based on so called `horocyclic coordinates' in punctured disk neighbourhoods of the punctures which have to be glued. 

Given a puncture $\e$ on a triply punctured sphere $\mathbb P$, let $\zeta: \HH \to \mathbb P$ be the natural projection, normalised so that $\e$ lifts  to $\infty \in \HH$, and so that the holonomy of the loop round $\e$ is, as above, $ \eta \mapsto \eta + 2$. Let $\D_*$ denote the punctured unit disc $\{ z \in \CC: 0 < z < 1\}$. The function $f : \HH \to \D_*$ given by  $f(\eta) = e^{i\pi \eta}$  is well defined in a neighbourhood $N$ of $\infty$ and is a homeomorphism from  an open neighbourhood of $\e$ in $\mathbb P$ to an open neighbourhood of the puncture in $\D_*$. Choosing another puncture $\e' $ of $\mathbb P$, we can further normalise so that $\e'$ lifts to $0$. Hence $f$ maps the  part of the geodesic from $\e'$ to $\e$ contained in $N$, to the interval $(0,r)$ for suitable $r>0$. These normalisations (which depend only on the choices of $\e$ and $\e'$), uniquely determine $f$. Kra calls the  natural parameter $z = f(\eta)$ in $\D_*$,  the \emph{horocyclic coordinate} of the puncture $\e$ relative to $\e'$.

Now suppose that $\hat z$ and $\hat z'$ are horocyclic coordinates for distinct punctures in distinct copies $\mathbb P_{\hat z}$ and $\mathbb P_{\hat z'}$ of $\mathbb P$. Denote the associated punctured discs by $\D_*(\hat z)$ and $ \D_*(\hat z')$. To plumb  across the  two punctures, first delete  punctured disks $   \{ 0 < \hat z <r\}$ and $  \{ 0 < \hat z' <r'\}$  from $\D_*(\hat z)$ and $ \D_*(\hat z')$ respectively. Then glue  the remaining surfaces along the annuli $$A(\hat z) =  \{ \hat z \in \D_*: r < \hat z <s\}\;\;\text{and}\;\; A(\hat z') =  \{ \hat z' \in \D_*: r' <\hat z' <s'\}$$ by the formula $\hat z \hat z'=t_{K}$. (To avoid confusion  we have written $t_K$ for Kra's parameter $t \in \CC$.) It is easy to see that this is essentially identical to our construction; the difference is simply that we implement the gluing in $\HH$ and $\HH'$ without first mapping to $\D_*(\hat z)$ and $ \D_*(\hat z')$. Our method has the advantage of having a slightly simpler formula and also of respecting the twisting around the puncture, which is lost under the map $f$.

The precise relation between our coordinates $z,z' \in \HH$ in Section~\ref{sec:moregluing} and the horocyclic coordinates $\hat z,\hat z'$ is  $$z  = f^{-1}(\hat z) = - \dfrac{i}{\pi}\log \hat z,\;\;\;\; z' = f^{-1}(\hat z')=-\dfrac{i}{\pi} \log\hat z'. $$ The relation $$\hat z \hat z'=t_K$$ translates to  $$\log t_K = \log \hat z' + \log \hat z$$ which, modulo $2\pi i \ZZ$, is exactly our relation $$ -z  + \tau = z'.$$ Hence we deduce that $$\tau =-\dfrac{i}{\pi} \log t_K.$$

\subsection{Relation to the Maskit embedding of $\Sigma$}
\label{sec:maskit}

As usual let $\PC = \{ \s_1, \ldots,\s_{\xi}\}$ be a pants decomposition of $\Sigma$. We have constructed a family of projective structures on $\Sigma$, to each of which is associated a natural holonomy representation $\rho_{\underline \tau}: \pi_1(\Sigma) \to PSL(2,\CC)$. We want to prove that our construction, for suitable values of the parameters, gives exactly the \emph{Maskit embedding} of $\Sigma$. For the definition of this embedding we follow \cite{seriesmaskit}, see also~\cite{Maskit}. Let $\RR(\Sigma)$ be the representation variety of $\pi_1(\Sigma)$, that is, the set of representations $\rho: \pi_1(\Sigma) \to PSL(2,\C)$ modulo conjugation in $PSL(2,\C)$. Let $\M \subset \cal R$ be  the subset of representations for which: 
\begin{enumerate}\renewcommand{\labelenumi}{(\roman{enumi})}
\item the group $G= \rho\left(\pi_1(\Sigma)\right)$ is discrete (Kleinian) and $\rho$ is an isomorphism, 
\item the images of $\s_i$, $i=1, \ldots, \xi$, are parabolic,
\item all components of the regular set $\Omega(G)$ are simply connected and there is exactly one invariant component  $\Omega^+(G)$,
\item the quotient $\Omega(G)/G$ has $k + 1$ components (where $k = 2g-2+n$ if $\Sigma = \Sigma_{(g,n)}$), $\Omega^+(G)/G$ is homeomorphic to $\Sigma$ and the other components are triply punctured spheres.
\end{enumerate}
In this situation, see for example~\cite{Mar} (Section 3.8), the corresponding $3$-manifold $M_{\rho} = \HH^3/G$ is topologically $\Sigma \times (0,1)$. Moreover $G$ is a geometrically finite cusp group on the boundary (in the algebraic topology) of the set of quasifuchsian representations of $\pi_1(\Sigma)$. The `top' component $\Omega^+/G $ of the conformal boundary may be identified to $\Sigma \times \{1\}$ and is homeomorphic to $\Sigma$. On the `bottom' component  $\Omega^-/G $, identified to  $\Sigma \times \{0\}$, the pants curves $\s_1,\ldots, \s_\xi$ have been pinched, making  $\Omega^-/ G$ a union of $k$ triply punctured spheres glued across punctures corresponding to the curves $\s_i$. The conformal structure on $\Omega^+/G $, together with the pinched curves $\s_1,\ldots, \s_\xi$, are the \emph{end invariants} of $M_{\rho}$ in the sense of Minsky's ending lamination theorem. Since a triply punctured sphere is rigid, the conformal structure on $\Omega^-/ G$ is fixed and independent of $\rho$, while the structure on  $\Omega^+/ G$  varies. It follows from standard Ahlfors-Bers theory, using the measurable Riemann mapping theorem (see again~\cite{Mar} Section 3.8), that there is a unique group corresponding to each possible conformal structure on $\Omega^+/ G$. Formally, the \emph{Maskit embedding} of the \Teich space of $\Sigma$ is the map $\teich(\Sigma) \to \cal R$ which sends a point  $X \in \teich(\Sigma)$ to the unique group $G \in \M$ for which $\Omega^+/ G$ has the marked conformal structure $X$.   

\begin{Proposition} Suppose that $\underline \tau \in \CC^{\xi}$ is such that the associated developing map $Dev_{\tau}: \tilde \Sigma \to \hat \C$ is an embedding.
Then  the holonomy representation $ \rho_{\underline \tau}$  is a group isomorphism and   $G = \rho_{\tau}(\pi_1(\Sigma)) \in \cal M$.
\end{Proposition}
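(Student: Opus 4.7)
The plan is to verify the four defining conditions (i)--(iv) of the Maskit embedding $\M$ in turn, extracting most of the information from the hypothesis that $Dev_{\underline\tau}$ is an embedding together with the explicit structure of the construction in Section~\ref{sec:projstructure}.

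\textbf{Faithfulness and discreteness.} First I would exploit equivariance: by construction $Dev_{\underline\tau}$ intertwines the deck action of $\pi_1(\Sigma)$ on $\tilde\Sigma$ with the $G$-action on $\hat\C$, where $G = \rho_{\underline\tau}(\pi_1(\Sigma))$. If some nontrivial $\gamma \in \pi_1(\Sigma)$ satisfied $\rho_{\underline\tau}(\gamma)=\Id$, then $Dev_{\underline\tau}$ would be invariant under the nontrivial deck transformation $\gamma$, contradicting injectivity. Hence $\rho_{\underline\tau}$ is a group isomorphism onto $G$. Setting $\Omega^+:=Dev_{\underline\tau}(\tilde\Sigma)$, the subset $\Omega^+$ is open, simply connected, and $G$-invariant, and the $G$-action on it is conjugate via $Dev_{\underline\tau}$ to the deck action, hence free and properly discontinuous. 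Thus $G$ is discrete and $\Omega^+/G \cong \Sigma$, which gives condition (i) and part of (iv).

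\textbf{Parabolicity.} Condition (ii) is immediate from Lemma~\ref{Parab}.

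\textbf{The other components of $\Omega(G)$.} For conditions (iii) and (iv), I would use the explicit form of the charts. By Section~\ref{sec:projstructure}, each chart of the projective structure is, up to postcomposition by a Möbius transformation, the identification $\Phi_i: P_i \to \mathbb P$ lifted to one of the two ideal triangles $\Delta_0, \Delta_1 \subset \HH$; hence $\Omega^+$ is built from Möbius images of these triangles, glued along horocyclic strips across the pants curves according to the recipe~\eqref{eqn:gluing}. The complementary region $\hat\C \setminus \overline\HH$ in each local picture is \emph{not} glued to anything: only the strips $H \subset \HH$ are used. The Möbius image of $\hat\C \setminus \overline\HH$ is therefore an open round disk disjoint from $\Omega^+$ and contained in $\Omega(G)$. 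These disks fall into one $G$-orbit for each pair of pants $P_i$, and the stabiliser of any one of them is exactly the conjugate of the triply-punctured sphere group $\Gamma$ corresponding to $\pi_1(P_i) \subset \pi_1(\Sigma)$. The quotient of such a disk by its stabiliser is therefore a triply-punctured sphere, contributing one simply connected component to $\Omega(G)/G$ for each of the $k$ pants. Together with $\Omega^+/G\cong\Sigma$ this gives exactly $k+1$ components as required, with $\Omega^+$ the unique invariant component (it alone covers a non-triply-punctured surface).

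The main obstacle I expect is the last point: verifying that these `unglued' disks really are disjoint round disks in $\hat\C$, disjoint from $\Omega^+$, and that they exhaust $\Omega(G)$. The disjointness amounts to showing that distinct Möbius images of the lower half plane arising from different charts or different deck translates never overlap, which should follow from the injectivity of $Dev_{\underline\tau}$ applied near the horocyclic boundaries. That no further components of $\Omega(G)$ exist can then be deduced from Ahlfors' finiteness theorem once one knows $G$ is geometrically finite, the latter being a standard consequence of the fact that every end of the conformal boundary has been accounted for by an invariant component $\Omega^+/G \cong \Sigma$ together with $k$ rigid triply-punctured spheres.
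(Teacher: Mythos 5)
Your treatment of conditions (i) and (ii) is fine and matches the paper's (the equivariance argument for faithfulness and discreteness, and Lemma~\ref{Parab} for parabolicity), and your identification of the candidate triply punctured sphere components is the right picture. But the two points you yourself flag as "the main obstacle" are exactly where the argument is incomplete, and the way you propose to close them does not work. First, the disjointness of the ``unglued'' disks from $\Omega^+$ and from each other cannot ``follow from the injectivity of $Dev_{\underline\tau}$ applied near the horocyclic boundaries'': injectivity of the developing map only constrains the image $\Omega^+ = Dev_{\underline\tau}(\tilde\Sigma)$ and says nothing about whether points \emph{outside} that image are regular, or about how the various M\"obius images of the lower half plane sit relative to one another. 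The paper avoids this entirely: for each lift $\tilde P$ of a pair of pants, the three peripheral parabolics generate a subgroup $\Gamma(\tilde P)$ conjugate to the standard triply punctured sphere group, whose limit set is a round circle $C(\tilde P)$; since $\Omega^+$ is connected and (after normalisation) contains $\infty$, it lies in one complementary component of $C(\tilde P)$, so the other open disk $D(\tilde P)$ contains no limit points of $G$ and is precisely invariant under $\Gamma(\tilde P)$.

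Second, your deduction that no further components of $\Omega(G)$ exist is circular: you justify geometric finiteness by saying ``every end of the conformal boundary has been accounted for,'' which is precisely the exhaustion statement you are trying to prove; and even granting Ahlfors' finiteness theorem, that only gives finitely many finite-type components, not the bound $k+1$. The paper closes this gap quantitatively with Bers' area inequality: the pieces already identified contribute hyperbolic area at least $4\pi k$ to $\Omega(G)/G$ (each rigid triply punctured sphere has area $4\pi$), while Bers gives $\mathrm{Area}(\Omega(G)/G) \leq 4\pi(T-1)$ with $T = 2g+b-1$ the minimal number of generators, i.e.\ $4\pi(2g+b-2) = 4\pi k$. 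Equality forces $\Omega(G)$ to be exactly the disjoint union of $\Omega^+$ and the disks $D(\tilde P)$, which yields (iii) and (iv). Some such quantitative input (an area or Euler characteristic count) is needed; without it your argument does not rule out extra components of the regular set.
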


\begin{proof}  Since the developing map $Dev: \tilde \Sigma \to \hat \C$ is an embedding,  $G = \rho_{\underline \tau} \left(\pi_1(\Sigma)\right)$ is Kleinian. By construction (see Lemma~\ref{Parab}), the holonomy of each of the curves $\s_1, \ldots, \s_\xi$ is parabolic. This proves (i) and (ii). 

The image of $Dev$ is a simply connected $G$-invariant component $\Omega^+ = Dev(\tilde \Sigma)$ of the regular set $\Omega(G)$ of $G$. Since $\Omega^+$ is $G$-invariant,   its boundary $ \partial \Omega^+$ is the limit set  $\Lambda(G)$.

Let $P \in \P$, and let $\tilde P$ be a lift of $P$ to the universal cover $\tilde \Sigma$. The boundary curves $\s_{i_1}, \s_{i_2},\s_{i_3}$ of $P$ lift, in particular, to three curves in $\partial \tilde P$ corresponding to elements $\g_{i_1}, \g_{i_2},\g_{i_3} \in \pi_1(\Sigma)$ such that $\g_{i_1}  \g_{i_2} \g_{i_3}  = id$ and such that $\rho(\g_{i_j})$ is parabolic for $j=1,2,3$. These generate a  subgroup  $\Gamma(\tilde P)$ of $SL(2,\mathbb R)$ conjugate to $\Gamma$, see Section~\ref{sec:moregluing}. Thus the limit set $\Lambda(\tilde P)$ of $\Gamma(\tilde P)$ is a round circle  $C(\tilde P) $.

Without loss of generality, fix the normalisation of $G$ such that $\infty \in \Omega^+(G)$. Since $\Omega^+(G)$ is connected, it must be contained in the component  of  $\hat \CC \setminus \Lambda(\tilde P)$ which contains $\infty$. Since $\Lambda(G) = \partial \Omega^+(G)$, we deduce that $\Lambda(G)$ is also contained   in the closure of the same component, and hence that the open disk $D( \tilde P)$ bounded by  $C(\tilde P)$ and not containing $\infty$, contains no limit points. (In the terminology of~\cite{kstop}, $\Gamma(\tilde P)$ is peripheral with peripheral disk $D( \tilde P)$.)  It follows that $D( \tilde P)$ is precisely invariant under $\Gamma(\tilde P)$ and hence that $D( \tilde P)/G = D(\tilde P)/\Gamma(\tilde P)$ is a triply punctured sphere.

Thus $\Omega(G)/G$ contains the surface $\Sigma(G)= \Omega^+(G) / G$ and the union of  $k$ triply punctured spheres $D(\tilde P)/\Gamma(\tilde P)$, one for each pair of pants in $\P$. Thus the total hyperbolic area of  $\Omega(G)/G$ is at least  $4 \pi k$. Now  Bers' area inequality~\cite{bers}, see also eg~\cite{Mat} Theorem 4.6, states that $$\rm{Area} (\Omega(G)/G) \leq 4 \pi (T-1)$$ where $T$ is the minimal number of generators of $G$, in our case $2g + b -1$. Since $k = 2g+b-2$ we have$$4 \pi (2g+b-2) \leq  {\rm Area} (\Omega(G)/G) \leq 4 \pi (T-1) = 4 \pi (2g + b -2).$$ We deduce that  $\Omega(G)$ is the disjoint union of $\Omega^+(G)$ and  the disks $D(\tilde P)$, $P \in \P$. This completes the proof of (iii) and (iv).
\end{proof}

This gives an alternative viewpoint on our main result: we are finding a formula for the leading terms in $\tau_i$ of the trace polynomials of simple curves on $\Sigma$ under the Maskit embedding of $\teich(\Sigma)$. This was the context in which the result was presented in~\cite{kstop, seriesmaskit}, see also Section~\ref{sec:opt}.
 
\section{Calculation of paths}
\label{sec:examples}

In this section we discuss how to compute the holonomy of some simple paths. We first specify a particular path joining one hexagon to the next, then we study paths contained in one pair of pants,  and finally we  compute the holonomy representations of some paths in the one holed torus and in the four times punctured sphere. 

The gluing construction in Section~\ref{sec:gluing} effectively takes the charts to be the maps which identify $P_i$ with the standard fundamental domain $\Delta \subset \HH$. Precisely, as explained above, for each  $P= P_i  \in \P$, we have a fixed homeomorphism $\Phi_i: P \to \mathbb P$ and hence a map $\zeta^{-1} \circ \Phi_i : P \to \HH$, where $\zeta$ is the  projection of $\HH$ to $\mathbb P$. Let $\Delta_0(P) = \Phi_i^{-1} \circ \zeta (\Delta_0)$ be the white hexagon in $P$.  Also let $b(P) =  \Phi_i^{-1} \circ \zeta(b_0)$ where $ b_0 = \frac{1+ i\sqrt 3}{2} \in \Delta_0$ is the barycentre of the white triangle. This will serve as a base point in $\Delta_0(P)$. 

Suppose that $\g \in \S$. Although not logically necessary, we can greatly simplify our description by arranging $\g$ in standard Penner position, so that it always passes from one pants to the next through the white hexagons $\Delta_0(P_i)$. Suppose, as in Section~\ref{sec:projstructure}, that $\g$ passes through a sequence of pants $P_{i_1}, \ldots, P_{i_n}$. We may as well assume that $\g$ starts at the base point $b(P_{i_1})$ of $P_{i_1}$. Given our identification $\Phi_{i_1}$ of $P_{i_1}$ with $\mathbb P$, there is a unique lift $\tilde b(P_{i_1}) \in \Delta_0$ and hence there is a unique lift $\tilde \g$ of $\g \cap P_{i_1}$ to $\HH$ starting at  $\tilde b(P_{i_1})$. This path exits  $ \Delta_0$ either across one of its three sides, or across that part of a horocycle which surrounds one of the three cusps $0,1,\infty$ contained in $\Delta_0$.  In the first case, the holonomy is given by the usual action of the group $\Gamma$ on $\HH$, where $\Gamma$ is the triply punctured sphere group as in Section~\ref{sec:moregluing}. (This will be explained in detail in Section~\ref{sec:onepants}.) In the second case, we have a precise description of the gluing across the boundary annuli, giving a unique way to continue $\tilde \g$ into a lift of $P_{i_2}$.  In this case we continue in a new chart in which the lift of $ P_{i_2}$ is identified with $\Delta \subset \HH$, as before. 

The following result applies to an arbitrary connected loop on $\Sigma$.
\begin{Proposition} Let $\g  \in \pi_1(\Sigma)$ and suppose that $\sum_i i(\g,\s_i) = q$. Then the trace $\tr \rho([\g])$ is a polynomial in $\tau_{1}, \ldots, \tau_{\xi}$ of maximal total degree $q$ and of maximal degree $q_{k}(\gamma) = i(\gamma,\s_k)$ in the parameter $\tau_{k}$.
\end{Proposition}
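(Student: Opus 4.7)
The plan is to decompose $\rho([\g])$ into an ordered product of $PSL(2,\CC)$ matrices whose dependence on the plumbing parameters $\tau_1,\ldots,\tau_\xi$ is transparent, and then read off the degree bounds directly. First I would realise $\g$ as a closed path meeting each pants curve $\s_i$ in exactly $q_i = i(\g,\s_i)$ transverse points — for instance by arranging $\g$ in Penner standard position as in Section~\ref{sec:pennertwist} — so that the total number of pants-curve crossings along $\g$ is exactly $q = \sum_i q_i$.

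Next, following Section~\ref{sec:projstructure}, I would use the fixed identifications $\Phi_{i_j}: P_{i_j} \to \mathbb P$ and the induced charts into $\HH$ to express $\rho([\g])$ as an ordered product in which each factor is of one of two types. A factor of type (a) is a transition matrix implementing the gluing across some pants curve $\s_k$: by the gluing formula~\eqref{eqn:gluing}, such a matrix is conjugate by the fixed matrices $\Omega_\e,\Omega_{\e'}$ to the product $T_{\tau_k} J$, so each of its entries is a polynomial of degree at most one in $\tau_k$ and is independent of all the other $\tau_j$. A factor of type (b) is a deck transformation arising from the motion of $\g$ inside a single pants $P_{i_j}$; this lies in the fixed triply punctured sphere group $\Gamma$, and so is a constant matrix, independent of all the $\tau_j$.

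By construction, the number of type-(a) factors involving $\tau_k$ equals the number of crossings of $\g$ with $\s_k$, namely $q_k$. Since the entries of a matrix product are finite sums of products of entries of the factors, each entry of $\rho([\g])$ is a polynomial in $\tau_1,\ldots,\tau_\xi$, with total degree bounded by the sum of the degrees of the factors, and with degree in each $\tau_k$ bounded by the sum of the individual degrees in $\tau_k$. This gives total degree at most $q$ and degree at most $q_k$ in $\tau_k$, and the trace inherits the same bounds. The main subtlety to verify is that these bounds are insensitive to the choices of representative for $\g$ and of chart decomposition: independence of the representative is immediate from the fact that the projective structure, and hence the trace, is a homotopy invariant, while independence of the chart decomposition follows from the standard cocycle property of the overlap maps. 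Finally, when $q=0$ the loop $\g$ is homotopic to some pants curve $\s_i$ and Lemma~\ref{Parab} gives $\tr \rho([\g]) = \pm 2$, consistent with a degree-zero polynomial.
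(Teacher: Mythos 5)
Your proof is correct and follows essentially the same route as the paper: the paper likewise writes $\g$ as a product $\prod_j \kappa_j \l_j$ of loops $\kappa_j$ inside each pants (with constant holonomy in the fixed group $\Gamma$) and connecting paths $\l_j$ whose holonomy $\Omega_{\e}^{-1}J^{-1}T_{\tau_{i_j}}^{-1}\Omega_{\e'}$ is linear in the relevant $\tau_{i_j}$, and then counts the $q_k$ factors involving $\tau_k$ exactly as you do. Your additional remarks on minimal position and independence of choices are sensible but not a departure from the paper's argument.
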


\begin{proof} Suppose the boundary $\dd_{\e} P$ of one pair of pants $P$  is glued to the boundary  $\dd_{\e'} P'$ of another pair $P'$ along  a pants curve $\s$. The map $\Phi_{P'} \Phi_{P}^{-1}: \HH_P  \to \HH_{P'}$ which glues the horocycle labelled $\e$ in $ \Delta_0(P)$ to the horocycle labelled $\e'$  in $ \Delta_0(P')$ is $\Omega_{\e'}^{-1}  T_{\tau } J \Omega_{\e}$,  where as usual the maps $\Omega_{\e}$ and $\Omega_{\e'}$ are the standard maps taking $\e,\e'$ to $\infty$. Thus with the notation of Section~\ref{sec:projstructure}, the overlap map $R=\Phi_P\Phi_{P'}^{-1}$ is 
\begin{equation}
\label{eqn:holonomy1}
\Omega_{\e}^{-1} J^{-1}T_{\tau}^{-1}\Omega_{\e'}.
\end{equation}

Any curve $\gamma \in \pi_1(\Sigma)$ which intersects the pants curves $\s_i$ in total $q$ times passes through a sequence of pants $P_{i_1} ,\ldots , P_{i_{q}} =  P_{i_1}$ and can therefore be written as a product $\prod_{j =1}^{q} \kappa_j \l_j$ where $ \kappa_j \in \pi_1(P_{i_j}; b_{i_j}) $ is a path in $P_{i_j}$ with both its endpoints in the base point $b_{i_j}= b(P_{i_j})$ and  $\l_j = \l(P_{i_j},P_{i_{j+1}})$ is a path from $b_{i_j}$ to $b_{i_{j+1}}$  across the boundary $\s_{i_j}$ between $ P_{i_j}$ and $P_{i_{j+1}}$.  

Let $\rho: \pi_1(P_{i_j}; b_{i_j} ) \to \Gamma$ be the map induced by the identification  of $P_{i_j} $ with $\Delta \subset \HH$, where $\Gamma$ is the triply punctured sphere group as in Section~\ref{sec:moregluing}. It follows that the holonomy of $\g$ is a product  
\begin{equation}
\label{eqn:holonomy}
\rho([\g]) = \prod_{j= 1}^{q } \rho(\kappa_{j})\Omega_{\e_j}^{-1}J^{-1}T_{\tau_{{i_j}}}^{-1}\Omega'_{\e_{j+1}}
\end{equation}
from which the result follows. 
\end{proof}
 
It is clear from this formula that $\tr \rho([\g]) $ is an invariant of the free homotopy class of $\g$, because changing the base point of the path $\gamma$ changes the above product by conjugation.  

We can also define the holonomy of paths with distinct endpoints in one pair of pants $P$, see Section~\ref{sec:onepants}.

\subsection{Paths between adjacent pants}
\label{sec:adjacentpaths}

Suppose that pants $P$ and $P'$ are  glued along  $\s \in \PC$. If $\dd _{\infty}P$ is glued to $\dd_{\infty}P'$,  then  there is an obvious path $\l( P, P'; \infty,\infty; 0) $ on $\Sigma_0$ from $b(P)$ to $b(P')$, namely the projection to $\Sigma_0$ of the union of the path $ z= b(P) + it, t \in [0, \Im \tau/2 + \e]$ in $\Delta_0(P)$ with the path $ z'= b(P') + it,  t \in [\Im \tau/2 + \e, 0]$ in $\Delta_0(P')$. More generally, if  $\dd _{\e}P$ is glued to $\dd_{\e'}P'$, we define  $\l(  P,  P';\e,\e';0)$ to be the path obtained by first applying the maps $\Omega_{\e}, \Omega_{\e'}$ to the segments of $\l( P, P'; \infty,\infty; 0)$ in $P,P'$ respectively.  For a general surface $\Sigma(\ttau)$ we define $\l( P, P'; \e,\e'; \ttau) = \Phi_{\ttau}(\l( P, P'; \e,\e'; 0) ) $. Note that   $\l(P,P',\s; \ttau)$ is  entirely contained in the white triangles in $P$ and $P'$. Unless needed for clarity, we refer to all these paths as $\l(P,P')$.

Referring to the gluing equation~\eqref{eqn:gluing} we see that the holonomy of $\l(P,P')$ is given by 
\begin{equation}
\label{eqn:specialpath}
\rho\left(\l(P,P'; \e,\e';\ttau)\right) =  \Omega_{\epsilon}^{-1}J^{-1}T_{\tau}^{-1}\Omega_{\epsilon'}.
\end{equation}

As already noted in Lemma~\ref{lemma:independence}, the gluing parameters $\ttau$ are independent of the direction of travel (from $P$ to $P'$ or vice versa).  From \eqref{eqn:specialpath} we have $$ \rho\left(\l(P',P; \e',\e;\ttau)\right) =   \Omega_{\epsilon'}^{-1}J^{-1}T_{\tau}^{-1}\Omega_{\epsilon}$$ so that $$ \rho(\l(P',P; \e',\e;\ttau)^{-1}) =   \Omega_{\epsilon}^{-1} T_{\tau} J\Omega_{\epsilon'}.$$ Using the identities $J^{-1} = -J$, $T_{\tau}^{-1} = T_{-\tau}$ and $T_{\tau}J = JT_{-\tau}$ this gives
\begin{equation}
\label{eqn:specialpath1}
\rho\left(\l(P',P; \e',\e;\ttau)^{-1}\right) = -\rho\left(\l(P',P; \e',\e;\ttau)\right)^{-1}, 
\end{equation}
as one would expect. That fact will be particularly important for our proof in Section~\ref{sec:mainthm}.
 
\subsection{Paths in a pair of pants}
\label{sec:onepants}

We now calculate the holonomy of the three boundary loops in one pair of pants $P$. As usual, we identify $P$ with $\mathbb P$ so that the components of its  boundary are labelled $0,1,\infty$ in some order, and  the base point is the barycentre $b_0 = \frac{1+\sqrt 3i}{2}$ of $\Delta_0$. Orient each of the three boundary curves $\dd_{\e}(\mathbb P)$, where $\e \in \{0,1,\infty\}$, consistently with the three lines $\mu_{\e}$ where, as above, $\mu_\e \subset \HH$ is the unique oriented geodesic from $\e +1$ to $\e +2$, where $\e$ is in the cyclically ordered set $\{0,1,\infty\}$, see Figure~\ref{figure4_2}. We denote by $\eta_{\e} \in \pi_1(P;b_0)$ the loop based at $b_0$ and freely homotopic to the oriented loop $\dd_{\e}(\mathbb P)$. To calculate the holonomy of $\eta_{\e}$, we begin by noting the holonomies of the three homotopically distinct  paths $\g_{\e}$, with $\e \in  \{0,1,\infty\}$, joining $b_{0}$ to $b^*_{0} = \frac{-1+\sqrt{3}i}{2}$, the barycentre of $\Delta_{1}$, see Figure~\ref{figure4_5}.   
 
\begin{figure}[hbt] 
\centering 
\includegraphics[height=6cm]{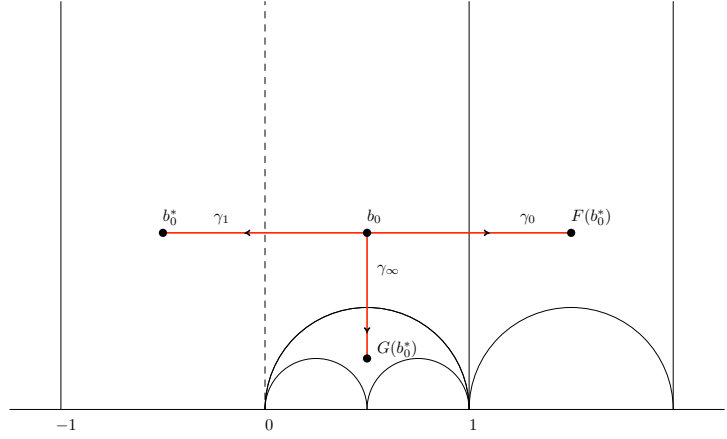} 
\caption{Paths between $b_{0}$ and $b^*_{0}$ where $F = \rho(\gamma_{0})$ and $G = \rho(\gamma_{\infty})$.}
\label{figure4_5}
\end{figure}
 
Each  path $\gamma_{\e}$ is determined by the geodesic $\mu_{\e}$ which it crosses. Thus $\gamma_{0}$ connects $b_{0}$ and $b^*_{0}$ crossing $\mu_{0}$, and so on.
The holonomies of these three paths are:
\begin{equation}
\label{eqn:basichols}
\rho(\gamma_{0}) =  	\begin{pmatrix}
	1  &  2  \\
	0 &  1  
	\end{pmatrix};\ \ 
\rho(\gamma_{1})= \Id = \begin{pmatrix}
	1  & 0 \\
	0 &  1 \end{pmatrix};\ \ 
\rho(\gamma_{\infty}) = \begin{pmatrix}
	1  &  0  \\
	2 &  1\end{pmatrix},
\end{equation}
as is clear from Figure~\ref{figure4_5}.   

\begin{figure}[hbt] 
\centering 
\includegraphics[height=6cm]{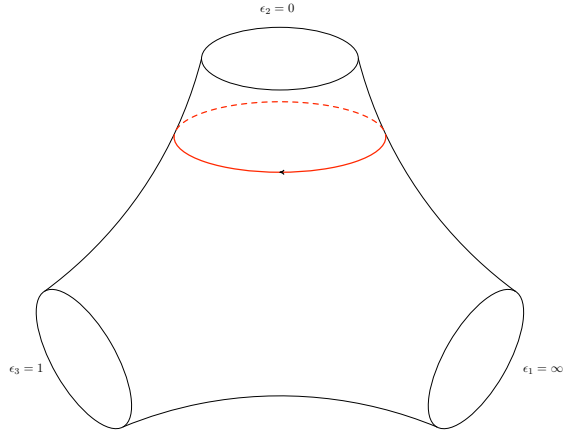} 
\caption{The loop $\eta_0$ homotopic to $\partial_{0}P$.}
\label{figure4_14}
\end{figure}

To calculate the holonomy of the loop $\eta_0$ around the boundary $\partial_{0}P$, we have to go from $b_{0}$ to $b^*_{0}$ crossing $\mu_{\infty}$ and then go from $b^*_{0}$ to $b_{0}$ crossing $\mu_{1}$. Hence, as illustrated in Figure \ref{figure4_14}, we have to go along the path $\gamma_{\infty}$ and then along the path $\gamma_{1}^{-1}$. Thus we find:
$$\rho(\eta_{0}) = \rho(\gamma_{\infty}\gamma_{1}^{-1}) =  \begin{pmatrix}
1  &  0  \\
2 &  1  
\end{pmatrix}.$$

Similarly to calculate the holonomy of $\eta_1$, we have to go from $b_{0}$ and $b^*_{0}$ crossing $\mu_{0}$ and then return from $b^*_{0}$ to $b_{0}$ crossing $\mu_{\infty}$. This means going along $\gamma_{0}$ and then  $\gamma_{\infty}^{-1}$. Thus the holonomy is:
$$\rho({\eta_1}) = \rho(\gamma_{0}\gamma_{\infty}^{-1}) = \begin{pmatrix}
1  &  2  \\
0 &  1 
\end{pmatrix}
\begin{pmatrix}
1  & 0  \\
-2 &  1 
\end{pmatrix}=  \begin{pmatrix}
-3  &  2  \\
-2 &  1  
\end{pmatrix}.$$

Finally, to calculate the holonomy of  $\eta_{\infty}$, we have to go from $b_{0}$ to $b^*_{0}$ crossing $\mu_{1}$ and return from $b^*_{0}$ to $b_{0}$ crossing $\mu_{0}$. Hence we have to go along the path $\gamma_{1}$ and then along the path $\gamma_{0}^{-1}$, so the holonomy is:
$$\rho(\eta_{\infty}) = \rho(\gamma_{1}\gamma_{0}^{-1}) = \begin{pmatrix}
1  &  -2  \\
0 &  1 
\end{pmatrix}.$$

As a check, we verify  that 
$$ \rho(\eta_0) \rho(\eta_{\infty}) \rho(\eta_1) =  \begin{pmatrix}
1  &  0  \\
2 &  1  
\end{pmatrix} \begin{pmatrix}
1  &  -2  \\
0 &  1 
\end{pmatrix}\begin{pmatrix}
-3  &  2  \\
-2 &  1  
\end{pmatrix}  = {\rm Id}$$
in accordance with the relation $  \eta_0 \eta_{\infty} \eta_1  = {\rm id} $ in $\pi_1(\mathbb P; b_0)$.

\subsection{Examples}

To conclude this section, we look at the special cases of the once punctured torus and the four times punctured sphere.     

\subsubsection{The once punctured torus}
\label{sec:opt}

\begin{figure}[hbt] 
\centering 
\includegraphics[height=9cm]{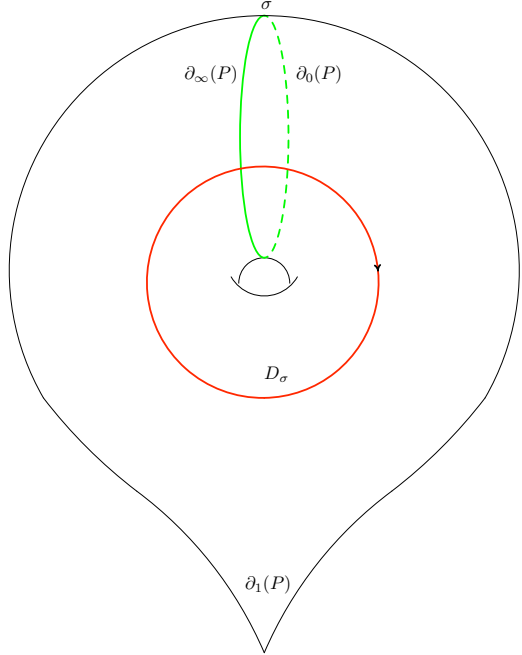} 
\caption{Plumbing for the once punctured torus.}
\label{figure4_16}
\end{figure}

The once punctured torus  $\Sigma_{1,1}$ is decomposed into one pair of pants by cutting along a single pants curve $\s$. To determine the projective structure on $\Sigma_{1,1}$ following our construction, we take a  pair of pants $P$ and glue the boundaries $\dd_{ \infty} P$ and $\dd_0 P$, so that the remaining boundary $\dd_1 P$  becomes the puncture on  $\Sigma_{1,1}$, see Figure~\ref{figure4_16}. To find $\rho: \pi_1(\Sigma_{1,1}) \to PSL(2,\C)$, it is sufficient to compute the holonomy of $\s$ and of its dual curve $D_{\s}$.

To do the gluing, take two copies of $P$ and, following the notation in Section~\ref{sec:gluing}, label the copy on the left in the figure $P$, and  that on the right, $P'$. We identify $P$ with  the standard triply punctured sphere $\mathbb P$ by the homeomorphism $\Phi: P \to \mathbb P$ so that the universal covers $\tilde P, \tilde P'$ are identified with copies $\HH, \HH'$ of the upper half plane $ \HH$ with coordinates $z,z'$ respectively. The cusps to be glued are labelled $\e  = \infty$ and $\e' = 0$.  We first apply the standard symmetries $\Omega_{\e }, \Omega_{\e'}$ which carry  $\epsilon = \infty$ and $\epsilon' = 0$ to $\infty$.  Referring to~\eqref{eqn:standardsymmetries}, we see that  $\Omega_{\infty} (z) = z$ and   $\Omega_0(z') = 1 -  \frac{1}{z'}$.

According to the choices made in Section~\ref{sec:marking}, the dual curve $D_{\s}$ to $\s$ is  the curve $\l(P,P',\s;\tau)$ joining $b(P) \in P $ to $b(P') \in P'$. By~\eqref{eqn:holonomy} in Section~\ref{sec:examples} and by the formulae~\eqref{eqn:standardsymmetries} for the standard symmetries, we have: 
\begin{align*}
	\rho(D_{\s}) &= \Omega_{\infty}^{-1}J^{-1}T_{\tau}^{-1}\Omega_{0}\\
	&= 
\begin{pmatrix}
i  &  0 \\
0 &  -i  
\end{pmatrix}\begin{pmatrix}
1  &  -\tau \\
0 &  1 
\end{pmatrix}\begin{pmatrix}
1  &  -1  \\
1 &  0
\end{pmatrix}
  = -i\begin{pmatrix}
\tau-1  &  1 \\
1 &  0 
\end{pmatrix}.
\end{align*}
Since clearly $\rho(\s) = \begin{pmatrix}
1  &  2 \\
0 &  1  
\end{pmatrix}$, this is enough to specify the representation $\rho: \pi_1(\Sigma) \to \mathrm{PSL}(2,\C)$.

\medskip

The original motivation for studying the representations in this paper came from studying the Maskit embedding of the once punctured torus, see~\cite{kstop} and Section~\ref{sec:maskit}. The Maskit embedding for $\Sigma_{1,1}$ is described in~\cite{kstop} as the representation $\rho': \pi_1(\Sigma_{1,1}) \to PSL(2,\CC)$ given by 
$$\rho'(\s) =   \begin{pmatrix}
1 &2 \\
0&  1  
\end{pmatrix} \ \ \rm{and} \ \ \rho'(D_{\sigma})  = -i \begin{pmatrix}
\mu & 1 \\
1 &  0   
\end{pmatrix}.$$ 
This agrees with the above formula setting $\mu = \tau -1$.

\subsubsection{The four holed sphere $\Sigma_{0,4}$}
\label{sec:fourholed}

We decompose $\Sigma_{0,4}$ into two pairs of pants $P$ and $P'$ by cutting along the curve $\s$, and label the boundary components as shown in Figure~\ref{figure4_17}, so that the boundaries to be glued are both labelled $\infty$. In the figure, $P$ is the upper of the two pants and $P'$  the lower. We shall calculate the holonomy of the dual $D_{\s}$  to $\s$  in two different ways, first in the standard Penner position and secondly in the symmetrical DT-position. As it is to be expected, the two calculations give the same result.

\medskip

\paragraph{\textbf{The loop $D_{\s}$ in Penner standard position.}}

If we put the loop $D_\s$ in Penner standard position, as illustrated in Figure~\ref{fig:dualPenner}, and as described in Section~\ref{sec:pennertwist}, we see that it is the concatenation of the paths:
\begin{enumerate}
\item $\l(P,P';\infty,\infty; \tau)$ from $b_0(P)$ to $b_0(P')$;
\item $\eta_{0}(P')$;  
\item $\eta_{\infty}(P')$;
\item $\l(P',P;\infty,\infty; \tau)$ from $b_0(P')$ to $b_0(P')$;
\item $\eta_{0}^{-1}(P)$.
\end{enumerate}

Thus using the calculations in Sections~\ref{sec:adjacentpaths} and~\ref{sec:onepants}, we have
\begin{align*}
\rho(D_{\s}) & = \rho\left(\l(P,P'; \infty,\infty; \tau) \cdot \eta_{0}(P') \cdot \eta_{\infty}(P') \cdot \l(P',P; \infty,\infty; \tau) \cdot \eta_{0}^{-1}(P) \right)\\
             & = \begin{pmatrix} i &-i\tau \\0 & -i \end{pmatrix} \cdot \begin{pmatrix} 1 & 0 \\ 2 & 1 \end{pmatrix} \cdot \begin{pmatrix} 1 & -2 \\ 0 & 1 \end{pmatrix} \cdot \begin{pmatrix} -i & i\tau \\0 & i \end{pmatrix} \cdot \begin{pmatrix} 1 & 0 \\ -2 &1 \end{pmatrix}\\
			& = \begin{pmatrix} -4\tau^2+6\tau-3 & 2\tau^2-4\tau+2\\-4\tau+4 & 2\tau-3 \end{pmatrix}  
\end{align*}
giving  $$\tr \rho(D_{\s}) = -4\tau^2+8\tau-6.$$ Now  $q(D_{\s}) = 2$ and $p(D_{\s}) = 0$ (see Section~\ref{sec:dylantwist}), and the number $h$  of $scc$-arcs  in $D_{\s}$ is $2$. Thus Theorem~\ref{thm:traceformula} predicts that $$ \tr \rho(D_{\s}) =  \pm i^2 2^2(\tau + (0-2) /2 )^2+R $$ where $R$ represents terms of degree at most $0$ in $\tau$,  in accordance with the computation above.
 
\medskip

\paragraph{\textbf{The loop $D_{\s}$ in symmetrical DT-position.}}

\begin{figure}[hbt] 
\centering 
\includegraphics[height=8.5cm]{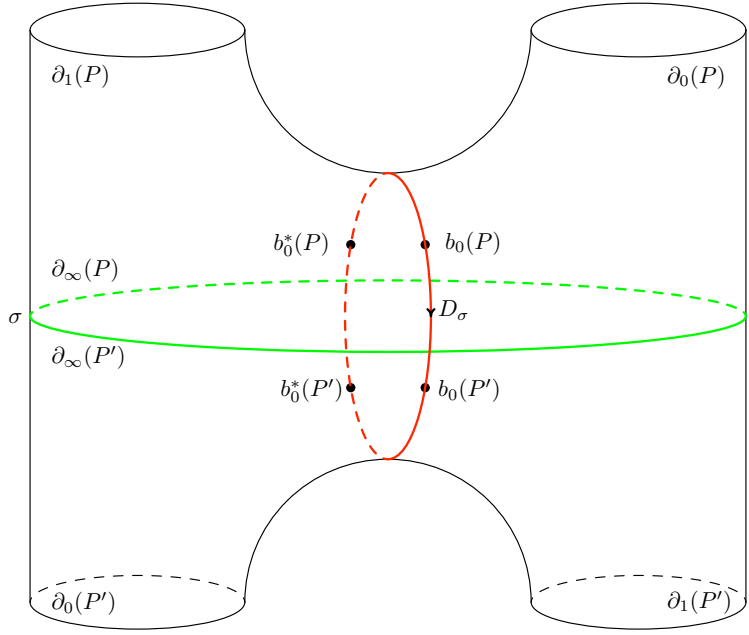} 
\caption{The loop $D_\s$ in its symmetrical DT-position in the four holed sphere.} 
\label{figure4_17}
\end{figure}

As usual we take as base points the barycenters $b_0(P)$ and $b_0^*(P)$ of the `white' and the `black' hexagons respectively in $P$ and the same base points $b_0(P')$ and $b_0^*(P')$ in $P'$. Also denote $\l^*(P,P';\infty,\infty; \tau)$ the path $R(\l(P,P';\infty,\infty; \tau))$ from $b_0^*(P)$ to $b_0^*(P')$ through the black hexagons, where $R$ is the orientation reversing symmetry of $\Sigma (\tau)$ as in Section~\ref{sec:dylantwist}.

From Figure~\ref{figure4_17}, we see that $D_\s$ is the concatenation of the paths:
\begin{enumerate}
\item $\l(P,P';\infty,\infty; \tau)$ from $b_0(P)$ to $b_0(P')$;
\item $\g_{\infty}(P')$ in $P'$ from $b_0(P')$ to $b_0^*(P')$;  
\item $\l^*(P',P;\infty,\infty; \tau)$ from $b_0^*(P')$ to $b_0^*(P')$;
\item $\g_{\infty}^{-1}(P)$ in $P$ from $b_0^*(P)$ to $b_0(P)$.
\end{enumerate}
Thus $$\rho(D_{\s} ) =\rho\left(\l(P,P',\s; \tau)\right) \cdot \rho\left(\g_{\infty}(P)\right) \cdot \rho\left(\l^*(P',P,\s; \tau)\right) \cdot \rho\left(\g_{\infty}^{-1}(P)\right).$$ Following  Remark~\ref{Rem:black-white} we have $$\l^*(P',P;\infty,\infty; \tau) = \l(P',P;\infty,\infty; \tau-2) = \l(P,P';\infty,\infty; \tau-2)^{-1}$$ so that, from Section~\ref{sec:adjacentpaths}, we have $\rho\left(\l^*(P',P;\infty,\infty; \tau)\right) = \begin{pmatrix} -i & i(\tau-2) \\0 & i \end{pmatrix} $. Thus  referring also to Section~\ref{sec:onepants} we see 
\begin{align*}
   \rho(D_{\s}) & = \begin{pmatrix} i &-i\tau \\0 & -i \end{pmatrix} \cdot  \begin{pmatrix} 1 & 0 \\ 2 & 1 \end{pmatrix} \cdot \begin{pmatrix} -i & i(\tau-2) \\0 & i \end{pmatrix} \cdot  \begin{pmatrix} 1 &0 \\ -2 &1 \end{pmatrix}\\
                 & = \begin{pmatrix} -4\tau^2+6\tau-3 & 2\tau^2-4\tau+2\\-4\tau+4 & 2\tau-3 \end{pmatrix}.
\end{align*}
Hence $\tr\left( \rho(D_{\s}) \right)= -4\tau^{2}+8\tau-6$  as before.

\section{Proof of Theorem~\ref{thm:traceformula}} 
\label{sec:mainthm}

In this final section we prove Theorem~\ref{thm:traceformula}. Our method is to show that the  product of matrices forming the holonomy  always takes a special form and then give an inductive proof. 

First consider the holonomy representation of a typical path. Let $\g \in \S_0$ be a simple loop on $\Sigma$.  We suppose $\gamma$ is in Penner standard position, so that it always cuts $\s_{i_j}$ in the arc $w_{i_j}$. Starting from the basepoint in some pants $P$, it crosses, in order, pants curves $\s_{i_j}$, $j = 1, \ldots, q(\g)$. If the boundaries glued across $\s_{i_j}$ are $\partial_{\e}P,\partial_{\e'}P'$, then, by equation~\eqref{eqn:holonomy1}, the contribution to the holonomy product $\rho(\gamma)$ is $$\Omega_{\e}^{-1}J^{-1}T_{\tau_{{i_j}}}^{-1}\Omega'_{\e'}$$ where $\tau_{{i_j}}=\tau_i$ whenever $\s_{i_j} = \s_i \in \PC$.

A single positive twist  around $\dd_{\e}P$ immediately before this boundary component contributes $\rho(\eta_{\e}^{-1}) = \Omega_{\e}^{-1} \rho(\eta_{\infty}^{-1})\Omega_{\e}$ (because $\eta_{\e}$ twists in the \emph{negative} direction round $\dd_{\e}P$, see Figure~\ref{figure4_14}), while a single positive twist around $\dd_{\e'}P'$ after the crossing contributes $\rho(\eta_{\e}) = \Omega_{\e'}^{-1} \rho(\eta_{\infty})\Omega_{\e'}$. Thus if, in general, $\gamma$ twists $\alpha_{j}$ times  around $\partial_{\e}P = \s_{i_j}$ immediately before the crossing and $\beta_{j}$ times after, the total contribution to the holonomy is
\begin{equation}
\label{eqn:hol2}
\Omega_{\e}^{-1}\rho(\eta_{\infty})^{-\alpha_j}J^{-1}T_{\tau_{{i}}}^{-1}\rho(\eta_{\infty})^ {\beta_j}\Omega'_{\e'},
\end{equation}
where $\sigma_{i_j} = \sigma_i \in \PC$.

From Sections~\ref{sec:moregluing} and~\ref{sec:onepants} we have  $$J^{-1}T_{\tau}^{-1} = \begin{pmatrix} i & -i\tau \\  0 & -i \end{pmatrix} \ \ \rm{and} \ \ \rho(\eta_{\infty}) = \begin{pmatrix} 1 & -2 \\  0 & 1 \end{pmatrix}.$$ 
For  variables $X,Y$, write $A_X =  \begin{pmatrix} 1 &X \\  0 & -1 \end{pmatrix}$ and $B_Y = \begin{pmatrix}  1 & Y \\ 0 & 1\end{pmatrix}$. We calculate  $$ \rho(\eta_{\infty})^{-\alpha_j}J^{-1}T_{\tau_{{i}}}^{-1}\rho(\eta_{\infty})^ {\beta_j} = iA_{X_j}$$ with $X_j = -\tau_{i}+2\alpha_j + 2 \beta_j$, from which we note in particular that, as expected, which side of the crossing the twists occur makes no difference to the final product.

\begin{Proposition}  
\label{prop:holproduct} 
\begin{flushleft}\end{flushleft}
\begin{enumerate} \renewcommand{\labelenumi}{(\roman{enumi})}
\item   Suppose that  $\gamma$ contains no $scc$-arcs. Then   $\rho(\gamma)$ is of the form $\pm i^q\Pi_{i=1}^{q}A_{X_j}\Omega_{i_j}$, where $ \Omega_{i_j} = \Omega_{0} \ \ {\rm or}  \ \ \Omega_{1}$ for all $j$. If  the term $A_{X_j}$  corresponds to the crossing of a pants curve $\s_{i_j} = \s_i$, with   $\alpha_j$ twists before the crossing and $\beta_j$ after, then $X_j = -\tau_{i}+2\alpha_j + 2 \beta_j$.
\medskip 
\item  If $\gamma$ contains $scc$-arcs, then $\rho(\gamma)$ takes the same form as above, with an extra term $$A_{X_j} \Omega_1 B_{\pm 2}\Omega_0  A_{X_j}$$ inserted  for each $scc$-arc which crosses $\s_{i_j}$ twice in succession. 
\medskip  
\item In all cases, the total P-twist of  $\gamma $ about $\s_i \in \PC$ is  $\hat p_i (\gamma) = \sum_{\s_{i_j} = \s_i} (\alpha_j +  \beta_j)$.
\end{enumerate}
\end{Proposition}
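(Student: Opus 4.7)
\textbf{Proof strategy for Proposition~\ref{prop:holproduct}.} The argument will proceed by an explicit simplification of the product in~\eqref{eqn:holonomy}: each crossing $j$ of $\gamma$ with a pants curve contributes a ``crossing block'' $\Omega^{-1}_{\ast}\cdot iA_{X_j}\cdot \Omega_{\ast}$ by~\eqref{eqn:hol2}, while each sub-arc $\kappa_j$ of $\gamma$ running inside a pair of pants $P_{i_j}$ contributes some interior holonomy $\rho(\kappa_j)$. The plan is to analyse case by case what $\rho(\kappa_j)$ looks like in Penner standard position and then to regroup the factors so that each crossing block $A_{X_j}$ is followed by exactly one factor lying in $\{\Omega_0,\Omega_1\}$, plus one extra insertion per $scc$-arc. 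The overall factor $\pm i^q$ is immediate once every crossing block is written as $iA_{X_j}$ sandwiched between symmetries.

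\emph{Part (i).} If $\gamma$ has no $scc$-arc, then by the description preceding Figure~\ref{fig:Pennerstandard} every sub-arc $\kappa_j$ is a $dcc$-arc which can be isotoped (keeping its endpoints in $w_{\ast}\subset \Delta_0(P_{i_j})$) to lie entirely in the white hexagon, crossing no seam. Thus $\rho(\kappa_j)=I$, and between two successive crossing blocks one is left with a product $\Omega_{\epsilon}\,\Omega_{\epsilon'}^{-1}$ where $\epsilon,\epsilon'\in\{0,1,\infty\}$ are the two \emph{distinct} labels attached to the entry and exit of $\kappa_j$. Using the explicit matrices in~\eqref{eqn:standardsymmetries} and the identities $\Omega_0^{2}=\Omega_1$, $\Omega_1^{2}=-\Omega_0$ and $\Omega_0\Omega_1=-I$ in $SL(2,\C)$, a case analysis over the six ordered pairs $(\epsilon,\epsilon')$ with $\epsilon\neq\epsilon'$ shows that $\Omega_\epsilon\Omega_{\epsilon'}^{-1}$ always equals $\Omega_0$ or $\Omega_1$ in $PSL(2,\C)$. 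Absorbing this factor into the preceding crossing block yields the claimed form $\pm i^q \prod_j A_{X_j}\Omega_{i_j}$ with $\Omega_{i_j}\in\{\Omega_0,\Omega_1\}$.

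\emph{Part (ii).} An $scc$-arc $\kappa_j$ has both endpoints on the same boundary component $\dd_\epsilon P_{i_j}$, so $\gamma$ crosses the adjacent pants curve $\sigma_{i_j}$ twice consecutively, producing two successive $A_{X_j}$ blocks. The interior holonomy $\rho(\kappa_j)$ is no longer trivial: by the description accompanying Figure~\ref{fig:Pennerstandard}, the arc leaves the white hexagon, crosses the seam separating $\dd_\epsilon P_{i_j}$ from the right-hand boundary, winds around that boundary and returns. Using the basic paths $\gamma_\epsilon$ and boundary loops $\eta_\epsilon$ of Section~\ref{sec:onepants}, one identifies $\rho(\kappa_j)$ as a conjugate of $\rho(\eta_\mu)^{\pm 1}$, where $\mu$ indexes the right-hand boundary. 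After conjugation by the two $\Omega_\epsilon$'s produced by the surrounding crossing blocks, a direct calculation then reduces the middle factor to $\Omega_1 B_{\pm 2}\Omega_0$, the sign of $\pm 2$ recording the orientation of the winding; this produces the extra block $A_{X_j}\Omega_1 B_{\pm 2}\Omega_0 A_{X_j}$ asserted in (ii).

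\emph{Part (iii)} is essentially definitional. By construction of Penner standard position, all twisting of $\gamma$ about $\sigma_i$ is pushed into the annular collar $A_i$; at each crossing of $\sigma_i$ the integers $\alpha_j$ and $\beta_j$ appearing in~\eqref{eqn:hol2} record the signed number of right Dehn twists occurring on the two sides of the crossing, with the sign convention of Section~\ref{sec:dehntwists} making right twists positive. Summing $\alpha_j+\beta_j$ over all crossings with $\sigma_{i_j}=\sigma_i$ therefore recovers the total signed twist of $\gamma$ inside $A_i$, which is exactly $\hat p_i(\gamma)$ by the definition given in Section~\ref{sec:pennertwist}. The main obstacle in the whole argument is the $scc$-case of part (ii): one must determine with care which seam is crossed and with which orientation the right-hand boundary is encircled, in order to pin down the correct sign in $B_{\pm 2}$ and verify that the surrounding $\Omega_\epsilon$'s really conjugate $\rho(\kappa_j)$ into $\Omega_1 B_{\pm 2}\Omega_0$ and not, say, $\Omega_0 B_{\pm 2}\Omega_1$; the $dcc$-analysis of part (i) and the bookkeeping in part (iii) are comparatively routine once~\eqref{eqn:hol2} is in hand.
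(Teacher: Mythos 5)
Your proposal follows essentially the same route as the paper: decompose $\rho(\gamma)$ into crossing blocks $\Omega_{\e}^{-1}(iA_{X_j})\Omega_{\e'}$ via~\eqref{eqn:hol2}, observe that for $dcc$-arcs the adjacent symmetries combine into a product $\Omega_{\e}\Omega_{\e'}^{-1}$ with $\e\neq\e'$ which is $\pm\Omega_0$ or $\pm\Omega_1$ (your identities $\Omega_0^2=\Omega_1$, $\Omega_1^2=-\Omega_0$, $\Omega_0\Omega_1=-I$ are correct and make this check explicit where the paper merely says it "easily follows"), identify the interior holonomy of an $scc$-arc with a conjugate of $\rho(\eta_0)^{\pm1}=\Omega_0^{-1}B_{\pm2}\Omega_0$, and read off (iii) from the definition of Penner standard position. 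The one point the paper states that you leave implicit is that, because $\gamma$ is simple, the twisting about $\s_{i_j}$ on the inward and outward strands of an $scc$-arc must agree, which is what makes the two flanking factors in (ii) both equal to $A_{X_j}$ with the same $X_j$.
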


\begin{proof} As  computed above we have $$\rho(\eta_{\infty})^{-\alpha}J^{-1}T_{\tau_{{}}}^{-1}\rho(\eta_{\infty})^ {\beta}
= i\begin{pmatrix} 1 & -\tau+2\alpha + 2 \beta \\  0 & -1 
\end{pmatrix} = iA_{-\tau+2\alpha + 2 \beta}.$$ It follows that the holonomy $\rho(\gamma)$ is a concatenation of $q$   terms  of the form $\Omega^{-1}_{\e}A_{-\tau+2\alpha + 2 \beta}\Omega_{\e'}$, one for each crossing of a pants curve $\s_{i_j}$. If $\gamma$ contains no $scc$-arcs,  then it enters and leaves each pants $P$ across distinct boundary components, say $\partial_{\e_1} P$ and   $\partial_{\e_2} P$ respectively. Then the corresponding  adjacent terms in the concatenated product are then $$\ldots   \Omega_{\e_1}  \Omega_{\e_2}^{-1}\ldots$$ where $\e_1 \neq\e_2$, from which  (i) easily follows.

We also note that regardless of how the twists are organised before or after the crossings, the sum $\sum_{j = 1}^{q_{i}(\g)} (\alpha_{j}+\beta_{j})$ of coefficients in terms corresponding to crossings of the pants curve $\s_i$ is equal to $\hat p_{i}(\g)$, the $i$-th P-twisting number of the curve $\g$ with respect to the pants decomposition $\mathcal{PC}$.  This proves (iii).

Now suppose that $\gamma$ contains some $scc$-arcs. Suppose that $\gamma$ cuts a curve $\s_{i_j}$ twice in succession entering and leaving a pants $P$ across the boundary  $\partial_{\infty} P$.  Since $\gamma$  is  in P-form, after crossing $\partial_{\infty} P$ it goes once around $\partial_{0} P$ in either the positive or negative direction and then returns to $\partial_{\infty} P$, see Figure~\ref{fig:dualPenner}.   Since $\g$ is simple,  the twisting around $\s_{i_j}$ is the same on the inward and the outward journeys. The term in the holonomy is therefore $$A_{X_j} \rho(\eta_0)^{\pm 1} A_{X_j} = A_{X_j} \Omega_0^{-1} \rho(\eta_{\infty} )^{\mp 1}\Omega_0  A_{X_j} = A_{X_j} \Omega_0^{-1} B_{\pm 2}\Omega_0  A_{X_j}$$ as claimed. 
 
If  more generally $\gamma$ enters and leaves $P$ across $\partial_{\e} P$, then this entire expression is multiplied on the left by $\Omega_{\e}^{-1}$ and on the right by $ \Omega_{\e}$. By the same discussion as in (i), this leaves the form of the holonomy product unchanged. The contribution to the twist about $\s_{i_j}$ is calculated as before.
\end{proof}

We are now ready for our inductive proof of Theorem~\ref{thm:traceformula}. Suppose first that $\gamma \in \S_0$ contains no $scc$-arcs. If $$\rho(\gamma) = \Pi_{i=1}^{q}A_{X_j}\Omega_{i_j}$$ define $ X^*_j = X_j +h(\Omega_{i_j})  +k(\Omega_{i_{j-1}})$ where $\Omega_{i_0} :=\Omega_{i_q}$ and $$h(\Omega_{i_j}) = \begin{cases} 1 & \text{if\;} i_j=0\\
                                  0 & \text{otherwise\;}
	                \end{cases}\;\;\;\; \;\;\text{and\;} \;\;\;\;\;	k(\Omega_{i_j}) = \begin{cases} 0 & \text{if\;} i_j=0\\
								                                                    1 & \text{otherwise}.
									                \end{cases}$$ Thus:
\begin{align*}
&\Omega_0 A_X \Omega_0 \rightarrow X^* = X+1 \\
&\Omega_0 A_X \Omega_1 \rightarrow X^* = X \\
&\Omega_1 A_X \Omega_0 \rightarrow X^* = X+2 \\
&\Omega_1 A_X \Omega_1 \rightarrow X^* = X+1.\\
\end{align*}

\begin{Remark}\label{rem:inverseinvce} {\rm  Replacing $\rho(\gamma)$ by $\rho(\gamma)^{-1}$ leaves the occurrences of the above blocks   unchanged. The entire matrix product is multiplied by $(-1)^q$. This is because $A_X^{-1} = -A_X$ and,  for example, $$(\Omega_0 A_X \Omega_1)^{-1} = \Omega_1^{-1} A_X^{-1} \Omega_0^{-1} = -\Omega_0 A_X \Omega_1.$$}
\end{Remark}

Now given the path of some $\g \in \S_0$, consider a crossing   for which $\s_{i_j} = \s_i $. Let $\Omega_{i_{j-1}} A_{X_j} \Omega_{i_j}$ be the corresponding terms in $\rho(\gamma)$, (where $\Omega_{i_{j-1}} $ is associated to the crossing of the previous pants curve $\sigma_{i_{j-1}}$). Let $p_j, \hat p_j$ be the respective contributions from this $j^{th}$ crossing to the DT- and P-twist coordinates of $\gamma$, so that the total twists $p_i, \hat p_i$ about $\s_i$ are obtained by summing over all crossings for which $\s_{i_j} = \s_i$: $p_i = \sum_{\s_{i_j} = \s_i} p_j$ and likewise $\hat p_i = \sum_{\s_{i_j}  = \s_i }\hat p_j$. Also define $p_j^*$ according to the same rule as $X^*$ above, in other words $p_j^* = p_j +h(\Omega_{i_j})  +k(\Omega_{i_{j-1}})$. Finally define  $p^*_i = \sum_{\s_{i_j}= \s_i}  p_j$. We have:
 
\begin{Lemma}\label{lem:trace1} 
 Suppose that $\gamma$ contains no $scc$-arcs and as usual let $p_i, \hat p_i$ be the DT- and P-twists of $\gamma$ about the pants curve $\s_i \in \PC$, while $q_i = i(\s_i, \gamma)$. Then $2\hat p_i =  p^*_i -q_i$. 
\end{Lemma}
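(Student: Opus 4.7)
The plan is to combine Theorem~\ref{thm:twistrelation} with a crossing-by-crossing interpretation of the correction terms $h(\Omega_{i_j}) + k(\Omega_{i_{j-1}})$. Since $p_i^* - p_i = \sum_{j:\, \s_{i_j}=\s_i}[h(\Omega_{i_j}) + k(\Omega_{i_{j-1}})]$, and Theorem~\ref{thm:twistrelation} gives $2\hat p_i - p_i + q_i = l(A,E;B) + l(C,E;D)$, the lemma reduces to proving
$$\sum_{j:\, \s_{i_j}=\s_i}\bigl[h(\Omega_{i_j}) + k(\Omega_{i_{j-1}})\bigr] = l(A,E;B) + l(C,E;D),$$
where $(A,B,E)$ and $(C,D,E)$ are the clockwise boundary labels of the two pants $P, P'$ adjacent to $\s_i = E$.

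I would then analyze a single crossing $j$ of $\s_i$. Because $\g$ has no $scc$-arcs, the arc of $\g$ just before this crossing lies in $P$ and runs from some $\mu \in \{A,B\}$ to $E$, while the arc just after lies in $P'$ and runs from $E$ to some $\nu \in \{C,D\}$. Completing the simplification left implicit in the proof of Proposition~\ref{prop:holproduct}(i), the matrix $\Omega_{i_{j-1}}$ arises from the sandwiched product $\Omega_{\e_P(\mu)} \Omega_{\e_P(E)}^{-1}$, where $\e_P$ denotes the $\{0,1,\infty\}$-labeling of $P$'s boundaries determined by $\Phi_P$. A direct case check using the explicit matrices in~\eqref{eqn:standardsymmetries} shows that for distinct $\e_1,\e_2 \in \{0,1,\infty\}$,
$$\Omega_{\e_1}\Omega_{\e_2}^{-1} = \pm\, \Omega_1 \text{\ \ iff\ \ } (\e_1,\e_2) \in \{(\infty,0),(0,1),(1,\infty)\},$$
and $\Omega_{\e_1}\Omega_{\e_2}^{-1} = \pm\, \Omega_0$ in the three remaining cases. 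This turns $k(\Omega_{i_{j-1}})$ and $h(\Omega_{i_j})$ into explicit indicator functions of the pairs $(\mu,E)$ and $(E,\nu)$.

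Next, I would match the $\{0,1,\infty\}$-labeling of $P$'s boundaries induced by $\Phi_P$ with the clockwise convention $(A,B,E)$ of Theorem~\ref{thm:twistrelation}, both referring to the natural orientation of $P$ inherited from $\HH$. Tracing through the six cases above, one checks that $k(\Omega_{i_{j-1}})=1$ exactly when the arc of $\g$ in $P$ runs from $A$ to $E$, and by the mirrored analysis on the $P'$-side that $h(\Omega_{i_j})=1$ exactly when the arc of $\g$ in $P'$ runs from $E$ to $C$. Summing over all $q_i$ crossings of $\s_i$ then gives $\sum k(\Omega_{i_{j-1}}) = l(A,E;B)$ and $\sum h(\Omega_{i_j}) = l(C,E;D)$, yielding the desired equality.

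The case $M(\s_i) = \Sigma_{1,1}$ where $P=P'$ is analogous, with $(A,B,E)$ and $(C,D,E)$ becoming two distinct cyclic orderings of the same triple of boundaries of the single pants (the boundary $E$ appearing twice because $\s_i$ is glued from two copies). The main obstacle is this last identification step: carefully reconciling the two orientation conventions (the $\{0,1,\infty\}$-labels from the plumbing construction versus the clockwise $(A,B,E)$-labels of Theorem~\ref{thm:twistrelation}) so that the correspondence $k=1 \Leftrightarrow \mu = A$ and $h=1 \Leftrightarrow \nu = C$ is consistent across both pants adjacent to $\s_i$, and in particular survives the self-gluing in the $\Sigma_{1,1}$ case.
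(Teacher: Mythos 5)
Your proposal follows essentially the same route as the paper: both reduce the lemma via Theorem~\ref{thm:twistrelation} to showing that the corrections $h(\Omega_{i_j})+k(\Omega_{i_{j-1}})$ summed over crossings of $\s_i$ equal $l(A,E;B)+l(C,E;D)$, and both establish this by a crossing-by-crossing case check identifying the flanking $\Omega_0$/$\Omega_1$ matrices with the boundary components joined by the adjacent arcs of $\gamma$ (the paper reads the four blocks $\Omega_{i_{j-1}}A_{X_j}\Omega_{i_j}$ off Figure~\ref{Figure1_7}, while you derive the same dichotomy from the identity $\Omega_{\e_1}\Omega_{\e_2}^{-1}=\pm\Omega_0$ or $\pm\Omega_1$). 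The orientation-matching step you flag as the remaining obstacle is exactly the content of the paper's ``checking the various other possible combinations,'' so the proposal is correct and not materially different.
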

 
\begin{proof} This is verified using Theorem~\ref{thm:twistrelation}, together with the fact that $\gamma$ is assumed to be in P-standard form. 

Consider a crossing   for which $\s_{i_j} = \s_i $ with corresponding term $\Omega_{i_{j-1}} A_{X_j} \Omega_{i_j}$ in $\rho(\gamma)$. With  $p_j, \hat p_j, p_j^*$ defined as above, we claim that $2\hat p_j = p^*_j  -1$.
 
Suppose for example that the relevant term in the holonomy is $\Omega_0 A_X \Omega_0$, so that by definition, $p_j^* = p_j +1$. Without loss of generality, we may suppose that $\s_{i_j} $ is the gluing of $\partial_{\infty}P$ to $\partial_{\infty}P'$ as shown in Figure~\ref{Figure1_7}. The first $\Omega_0$ means that there is an arc from $D$ to $E$, and the second $\Omega_0$ means there is an arc from $E$ to $A$. The formula of Theorem~\ref{thm:twistrelation} therefore gives a contribution $\hat p_j = (p_j + 0 + 1 -1)/2 = p^*_j -1$. 

Checking the various other possible combinations and then summing over all $q_i$ terms $\s_{i_j}$ with $\s_{i_j} = \s_i $ gives the result.
\end{proof}

In the case of no $scc$-arcs, Theorem~\ref{thm:traceformula} is an immediate corollary of this lemma and the following proposition:

\begin{Proposition}\label{thm:trace1} 
Suppose that $\gamma$ contains no $scc$-arcs, then $$\tr \left( \rho(\g)\right)  = \tr \left( \Pi_{j=1}^{q} A_{X_j} \Omega_{i_j}\right) = \pm \left(\Pi_{j=1}^{q}   X^*_j\right) + R$$  where $R$ denotes terms of degree at most $q-2$ in the $X_j$.
\end{Proposition}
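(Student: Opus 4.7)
The plan is to do a direct expansion of the matrix product, exploiting the fact that the leading and sub-leading terms in $\{X_j\}$ are controlled by very rigid rank-$1$ algebra. Write $A_{X_j}=D+X_j N$ where $D=\mathrm{diag}(1,-1)$ and $N=\begin{pmatrix}0&1\\0&0\end{pmatrix}$, so that
\[
\rho(\gamma)=\prod_{j=1}^{q}(D\Omega_{i_j}+X_j N\Omega_{i_j})=\sum_{S\subseteq\{1,\ldots,q\}}\Bigl(\prod_{j\in S}X_j\Bigr)F_1^S\cdots F_q^S,
\]
with $F_j^S=N\Omega_{i_j}$ when $j\in S$ and $F_j^S=D\Omega_{i_j}$ otherwise. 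The total degree in the $X_j$'s equals $|S|$, so modulo terms of degree $\leq q-2$ (absorbed into $R$) I only need the unique $|S|=q$ term and the $q$ terms with $|S|=q-1$.

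The key computational observation is that both $N\Omega_0=\bigl(\begin{smallmatrix}1&0\\0&0\end{smallmatrix}\bigr)$ and $N\Omega_1=\bigl(\begin{smallmatrix}1&-1\\0&0\end{smallmatrix}\bigr)$ have rank one and can be written as $e_1v_{i_j}^{T}$ where $e_1=(1,0)^{T}$, $v_0=(1,0)^{T}$, $v_1=(1,-1)^{T}$. Crucially $v_i^{T}e_1=1$ for $i\in\{0,1\}$, so any nonempty consecutive product telescopes: $N\Omega_{i_a}N\Omega_{i_{a+1}}\cdots N\Omega_{i_b}=e_1 v_{i_b}^{T}$. In particular, $\prod_j N\Omega_{i_j}=e_1v_{i_q}^{T}$ has trace $1$, so the degree-$q$ contribution to $\tr\rho(\gamma)$ is exactly $\prod_j X_j$, which matches the top term of $\prod_j X_j^{*}$.

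For the degree-$(q-1)$ contribution, I fix $i\in\{1,\ldots,q\}$ and look at the summand in which $D\Omega_{i_i}$ replaces $N\Omega_{i_i}$ in slot $i$. By cyclic invariance of the trace together with the telescoping identity applied to the remaining $q-1$ factors (with cyclic index convention $i_0:=i_q$), this summand contributes
\[
\Bigl(\prod_{j\neq i}X_j\Bigr)\cdot\tr\bigl(D\Omega_{i_i}\cdot e_1 v_{i_{i-1}}^{T}\bigr)=\Bigl(\prod_{j\neq i}X_j\Bigr)\cdot v_{i_{i-1}}^{T}D\Omega_{i_i}e_1.
\]
A direct $2\times 2$ calculation of the scalar $v_{i_{i-1}}^{T}D\Omega_{i_i}e_1$ over the four choices $(i_{i-1},i_i)\in\{0,1\}^2$ yields the values $1,2,0,1$, which agree case-by-case with $h(\Omega_{i_i})+k(\Omega_{i_{i-1}})$. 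Summing over $i$ reproduces exactly the degree-$(q-1)$ coefficient of $\prod_j X_j^{*}$, and combining with Step one gives $\tr\rho(\gamma)=\prod_j X_j^{*}+R$ with $\deg R\leq q-2$. The ambiguous sign $\pm$ reflects the orientation convention, in view of Remark~\ref{rem:inverseinvce}.

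The main obstacle I anticipate is bookkeeping the cyclic indexing correctly in the telescoping, especially at the wrap-around $i=1$ (where $i_0$ must be read as $i_q$); and verifying that the rather ad hoc-looking pair $(h,k)$ of indicator functions is forced by the four-case computation of $v_{i_{i-1}}^{T}D\Omega_{i_i}e_1$. Everything hinges on the rank-$1$ factorisation $N\Omega_i=e_1v_i^{T}$ and the identity $v_i^{T}e_1=1$, which is what makes the full $2^q$-term expansion collapse to a clean polynomial in $X_j^{*}$.
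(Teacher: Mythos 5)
Your argument is correct, and it takes a genuinely different route from the paper's. The paper proves Proposition~\ref{thm:trace1} by induction on $q$, repeatedly applying the trace identity $\tr(AB)=\tr(A)\tr(B)-\tr(AB^{-1})$ to split off short blocks, with a three-way case analysis on how many times $\Omega_0$ occurs consecutively. You instead expand $A_{X_j}=D+X_jN$ multilinearly and exploit the rank-one factorisation $N\Omega_0=e_1(1,0)$, $N\Omega_1=e_1(1,-1)$ with $v_{\e}^{T}e_1=1$, so the $2^q$-term expansion collapses: the degree-$q$ term has coefficient $\tr(e_1v_{i_q}^{T})=1$, and each degree-$(q-1)$ term reduces after a cyclic permutation to the scalar $v_{i_{j-1}}^{T}D\Omega_{i_j}e_1$, whose four values $1,0,2,1$ are exactly $h(\Omega_{i_j})+k(\Omega_{i_{j-1}})$; this matches the degree-$(q-1)$ part of $\Pi_j X_j^*$ term by term. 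Your method buys several things: it is non-inductive, it avoids the case analysis entirely, it pins the sign down to $+$ for the product as written (the $\pm$ then only enters via Proposition~\ref{prop:holproduct} and Remark~\ref{rem:inverseinvce}), and it makes transparent why only the adjacent pair $(\Omega_{i_{j-1}},\Omega_{i_j})$ determines the correction $X_j^*-X_j$. What the paper's splitting technique buys is that it carries over with little change to Theorem~\ref{thm:trace2}, where blocks $A_X\Omega_1B_Y\Omega_0A_X$ from $scc$-arcs are inserted; your expansion would need a separate (though very likely analogous, since $B_Y=\Id+YN$ is again affine in $N$) treatment there. One small point to tidy: the case $q=1$ should be checked directly, since the telescoping block of remaining factors is then empty and the empty product is $\Id$ rather than $e_1v^{T}$ --- the answer $\tr(D\Omega_{i_1})=1=h(\Omega_{i_1})+k(\Omega_{i_1})$ still comes out right, but not by the stated mechanism.
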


\begin{proof}[Proof of Theorem~\ref{thm:traceformula}] (No $scc$-arcs case.)  
By Proposition~\ref{prop:holproduct}, if  $\sigma_{i_j} = \sigma_i$ then $ X^*_{j}  = -\tau_{i}-2\alpha_j - 2 \beta_j$. There are $q_i(\gamma)$ such terms $X_j$. Thus the top order term of $\tr \rho(\gamma)$ is  $\tau_1^{q_1} \ldots \tau_\xi^{q_\xi}$, with coefficient $\pm i^q$, in accordance with the result of Theorem~\ref{thm:traceformula}.

Multiplying out  $\Pi_{j=1}^{q}   X^*_j $ and taking account of the $q_i$ terms $X_j$ with $\sigma_{i_j} = \sigma_i$, we see from Proposition~\ref{thm:trace1} that  the coefficient of $  \tau_1^{q_1} \ldots \tau_i^{q_i -1} \ldots \tau_\xi^{q_\xi}$ is $$\sum_{\sigma_{i_j} = \sigma_i}  2\alpha_j + 2 \beta_j = 2 \hat p_i.$$ From Lemma~\ref{lem:trace1} we  have $2 \hat p_i =  p^*_i - q_i$, which is exactly the coefficient in  Theorem~\ref{thm:traceformula}.
\end{proof}

\begin{proof} [Proof of Proposition~\ref{thm:trace1}.]   
We prove this by induction on the length $q$ of the product $ \Pi_{j=1}^{q} A_{X_j} \Omega_{i_j}$. If $q=1$, with respect to the cyclic ordering we see either the block $ \Omega_0 A_X \Omega_0$ or $ \Omega_1 A_X \Omega_1$, so that $X^* = X+1$. In both cases we check directly that $\tr A_X \Omega_0 = \tr A_X  \Omega_1 = X+1$. 

\medskip

The case $q=2$ corresponds to a product $A_{X_1}  \Omega_\e A_{X_2}  \Omega_{\e'}$. Hence there are four possibilities corresponding to $\e = \pm 1$ and $\e' = \pm 1$. These cases can be checked either by multiplying out or by using the trace identity
\begin{equation}\label{eqn:grandfather}
\tr (AB) =  \tr (A)\tr (B) -  \tr (AB^{-1}).
\end{equation}
For example, if $\e = \e' = 0$, then
\begin{eqnarray*}
\tr A_{X_1}  \Omega_0 A_{X_2}  \Omega_0&=& \tr (A_{X_1}  \Omega_0) \tr (A_{X_2}  \Omega_0)+ \tr A_{X_1}A_{X_2} \\&=&   (X_1+1)(X_2+1) +2 = X^*_1X^*_2 + 2,
\end{eqnarray*} 
where we used the relation $A_X^{-1} = -A_X$. 

If  $\e = 0, \e' = 1$ then $$\tr \left(A_{X_1}  \Omega_0 A_{X_2}  \Omega_1\right)= \tr (A_{X_1}  \Omega_0) \tr (A_{X_2}  \Omega_1)- \tr \left(A_{X_1} \Omega_0 \Omega_1^{-1}A_{X_2}^{-1}\right). $$ The first term on the right hand side is $(X_1+1)(X_2+1)$ and the last term reduces to $$-\tr A_{X_2}^{-1}A_{X_1} \Omega_1 = -X_2+X_1-1.$$  Hence $$  \tr A_{X_1}  \Omega_0 A_{X_2}  \Omega_1 = X_1X_2 +2X_2 +2 = (X_1+2)X_2 +2 =  X^*_1 X^*_2 +2 .$$ The other two cases with $q=2$ are similar (or can be obtained from these by replacing $\gamma$ with $\gamma^{-1}$).

\medskip

Now we do the induction step. Suppose the result true for all products of length less than $q$. We split into three cases.

\medskip

\noindent \emph {Case (i): $\Omega_0$ appears $3$ times consecutively.}

After cyclic permutation the product is of the form $$ A_{X_1} \Omega_0 A_{X_2} \Omega_0 A_{X_3} \Omega_0 \ldots \Omega_{i_q}.$$ We will apply~\eqref{eqn:grandfather}, splitting the product  as $$(A_{X_2} \Omega_0) \times (A_{X_3} \Omega_0 \ldots \Omega_{i_q}A_{X_1} \Omega_0).$$  Considering the first term of this split product alone, $A_{X_2}$ is still preceded and followed by $\Omega_0$. Likewise, taking the second term alone, $A_{X_1}$ and $A_{X_3}$ are still preceded and followed by the same values $\Omega_i$ and nothing else has changed. Thus the induction hypothesis gives $$ \tr A_{X_2} \Omega_0 =   X^*_2 \;\;\; \text{and } \;\;\; \tr A_{X_3} \Omega_0 \ldots \Omega_{i_q}A_{X_1} \Omega_0=  X^*_3 \ldots \ X^*_q   X^*_1.$$

Now consider the remaining term coming from ~\eqref{eqn:grandfather}: $$\tr [A_{X_2} \Omega_0(A_{X_3} \Omega_0 \ldots \Omega_{i_q}A_{X_1} \Omega_0)^{-1}] = \tr [A_{X_2} A_{X_1}^{-1}\Omega_{i_q}^{-1} \ldots A_{X_3}^{-1}].$$ Cyclically permuting, the three terms $A_{X_3}^{-1}, A_{X_2}, A_{X_1}^{-1}$ combine to give a single term $A_{X_3 + X_2+ X_1}$, so that the trace has degree at most $q-2$ in the variables $X_3 + X_2+ X_1,X_4,\ldots,X_q$. Putting all this together proves the claim.

\medskip

\noindent \emph{Case (ii): $\Omega_0$ appears at most $2$ times consecutively.} 

Suppose first $q \geq 4$. Thus after cyclic permutation the product is of the form: $$ A_{X_1} \Omega_0 A_{X_2} \Omega_0 A_{X_3} \Omega_1 A_{X_4} \ldots \Omega_{1}.$$ We apply~\eqref{eqn:grandfather}  splitting as $$(A_{X_1} \Omega_0 A_{X_2} \Omega_0 A_{X_3} \Omega_1 ) \times (A_{X_4} \ldots A_{X_q} \Omega_1).$$ Taking each of these subproducts separately, we see that again the terms $\Omega_i$ preceding and following each $A_X$ are unchanged. So the induction hypothesis gives $$\tr (A_{X_1} \Omega_0 A_{X_2} \Omega_0 A_{X_3} \Omega_1 ) =  X^*_1 X^*_2 X^*_3$$ and $$\tr (A_{X_4} \ldots A_{X_q} \Omega_1) = X^*_4 \ldots X^*_q.$$ Moreover we note $$A_{X_1} \Omega_0 A_{X_2} \Omega_0 A_{X_3} \Omega_1 \Omega_1^{-1}  A_{X_q}^{-1}\ldots A_{X_4}^{-1}$$ is of degree at most $q-2$ in the variables $X_4+X_1, X_2, X_3+X_q, X_5,\ldots,X_{q-1}$. The result follows.

The case $q =3$ is dealt with by splitting $$A_{X_1} \Omega_0 A_{X_2} \Omega_0 A_{X_3} \Omega_1\;\;\;\;\; \text{as} \;\;\;\;\; (A_{X_3} \Omega_1 A_{X_1} \Omega_0) \times A_{X_2} \Omega_0,$$ using the previously considered case $q=2$, and noting that $$ A_{X_3} \Omega_1 A_{X_1} \Omega_0 \Omega_0^{-1} A_{X_2}{-1} $$ has degree $1$ in the variable $X_1+X_2+X_3$.

\medskip

\noindent \emph{Case (iii): $\Omega_0$ and $\Omega_1$ appear alternately.}

In this case we split $$ A_{X_1} \Omega_0 A_{X_2} \Omega_1 A_{X_3} \Omega_0 A_{X_4} \ldots \Omega_{1} \;\;\; \text{as} \;\;\; (A_{X_1} \Omega_0 A_{X_2} \Omega_1)\times ( A_{X_3} \Omega_0 A_{X_4} \ldots \Omega_{1})$$ and the argument proceeds in a similar way to that before.
\end{proof} 

\medskip

Now we add in the effect of having $scc$-arcs, that is we deal with the case $h>0$. 

\begin{Theorem}\label{thm:trace2} 
Suppose that a matrix product of the form in Theorem~\ref{thm:trace1} is modified by the insertion of $h$ blocks $ A_{X_j} \Omega_0^{-1} B_{Y_r}\Omega_0  A_{X_j} $, $r = 1,\ldots,h$ for variables $Y_r \in \CC$. Then its trace is $$ \pm \left(\Pi_{j=1}^{h} Y_k\right) \left(\Pi_{j=1}^{q} X^*_j\right) + R$$  where $R$ denotes terms of degree at most $q-2$ in the $X_j$.
\end{Theorem}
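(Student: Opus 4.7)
The plan is to extend the inductive argument of Proposition~\ref{thm:trace1} by isolating the effect of each inserted $scc$-block. The starting observation is the algebraic identity
\[
A_X \Omega_0^{-1} B_Y \Omega_0 A_X \;=\; I + Y\, M_X, \qquad M_X \;=\; \begin{pmatrix} -X & -X^{2} \\ 1 & X \end{pmatrix},
\]
which follows directly from $A_X^{2}=I$. The matrix $M_X$ is rank one, with the crucial factorisation $M_X = v_X w_X^{T}$ where $v_X = (-X,1)^{T} = -A_X e_2$ and $w_X^{T} = (1,X) = e_1^{T} A_X$. I would begin by expanding $\tr(P)$ multilinearly in $Y_1,\ldots,Y_h$, obtaining $\tr(P) = \sum_{S\subseteq\{1,\ldots,h\}} (\prod_{r\in S} Y_r)\, T_S$, where $T_S$ is the trace of the product in which each block indexed by $r\in S$ has been replaced by $M_{X_{j_r}}$ and each block indexed by $r\notin S$ has been replaced by $I$. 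When $|S|=h-k$, the $k$ collapsed blocks eliminate $2k$ matrices of degree $\leq 1$ in their $X$-variables, while the remaining $h-k$ blocks contribute rank-one matrices $M$ of degree exactly $2$. Hence the $X$-degree of $T_S$ is bounded by $(q-2h)+2(h-k)= q-2k$, which is $\leq q-2$ whenever $k\geq 1$; all such contributions are absorbed into $R$. It therefore suffices to analyse the top-degree term $T_{\{1,\ldots,h\}}$.

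Here I would substitute $M_X = v_X w_X^{T}$ and use the scalar identity $\tr(A v w^{T})= w^{T}A v$ together with cyclic invariance of the trace to factor
\[
T_{\{1,\ldots,h\}} \;=\; \prod_{r=1}^{h} \bigl( w_{X_{j_r}}^{T}\, \Omega_{i_{j_r}} P_r \, v_{X_{j_{r+1}}} \bigr)
\]
(indices cyclic). Each factor equals $-(A_{X_{j_r}}\Omega_{i_{j_r}}P_r A_{X_{j_{r+1}}})_{1,2}$, by $w_X^{T} = e_1^{T}A_X$ and $v_{X'} = -A_{X'}e_2$; that is, up to sign it is the $(1,2)$-entry of an \emph{open} matrix product of the type handled in Proposition~\ref{thm:trace1}, but one that begins and ends with an $A_X$-factor rather than an $\Omega$.

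The main technical obstacle is then the entry-level analogue of Proposition~\ref{thm:trace1}: for any standard product $Q$,
\[
\bigl(A_X\, \Omega_i\, Q\, A_{X'}\bigr)_{1,2} \;=\; \pm\, X^{*} X'^{*} \!\!\prod_{X_k \in Q}\! X_k^{*} + (\text{lower degree}),
\]
where the interior starred values are those of Proposition~\ref{thm:trace1} and the endpoint values $X^{*},X'^{*}$ use the \emph{block-internal} conventions forced by the fixed $\Omega_0$-matrices surrounding each block, namely $k(\Omega_0)=0$ on the left of $A_X$ and $h(\Omega_1)=h(\Omega_0^{-1})=0$ on the right of $A_{X'}$. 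I would prove this by induction on $|Q|$; the base cases $|Q|=0,1$ are direct computations (one checks, for example, $(A_X \Omega_\varepsilon A_{X'})_{1,2} = X^{*}X'^{*}+1$ for $\varepsilon\in\{0,1\}$). The inductive step mirrors the case analysis of Proposition~\ref{thm:trace1}, using the identity $(M)_{1,2} = \tr(M E_{21})$ together with the striking relation $E_{21} = -A_X M_X A_X$ (valid for \emph{every} $X$, as a direct matrix check shows) to convert the entry computation into a trace one, on which the trace identity $\tr(AB) = \tr(A)\tr(B)-\tr(AB^{-1})$ can be applied at a well-chosen interior split. The delicate point is to track the starred corrections for the endpoint $A_X$-factors under these splittings. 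Granted the entry claim, multiplying the $h$ bilinear forms gives $T_{\{1,\ldots,h\}} = \pm \prod_{j=1}^q X_j^{*} + (\text{lower})$; combining with the prefactor $\prod_r Y_r$ and absorbing the remaining terms into $R$ yields the formula of Theorem~\ref{thm:trace2}.
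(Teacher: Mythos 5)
Your reduction is genuinely different from the paper's argument, and the parts you carry out are correct: the identity $A_X\Omega_0^{-1}B_Y\Omega_0A_X = I + YM_X$ with $M_X = v_Xw_X^T$ rank one, the multilinear expansion in the $Y_r$ with the degree count $q-2k$ showing that only the full monomial $Y_1\cdots Y_h$ can reach top degree, and the factorisation of $T_{\{1,\ldots,h\}}$ into a cyclic product of scalars $w_X^T C\, v_{X'} = -(A_XCA_{X'})_{1,2}$ all check out (I verified the base cases $(A_X\Omega_\e A_{X'})_{1,2} = X^*X'^*+1$ with your stated endpoint conventions, and they are right). The paper proceeds quite differently: it inducts on $h$, peeling off one block at a time by applying $\tr(AB)=\tr(A)\tr(B)-\tr(AB^{-1})$ at the split $(\cdots\Omega_vA_X)\times(\Omega_1B_Y\Omega_0A_X)$ and running a four-way case analysis on the neighbouring $\Omega_u,\Omega_v$; this keeps everything at the level of traces of closed products, so Proposition~\ref{thm:trace1} can be quoted directly and no statement about matrix entries is ever needed.

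The gap is that your entire argument funnels into the displayed ``entry-level analogue'' of Proposition~\ref{thm:trace1}, and that claim is not proved: you verify $|Q|=0$ and gesture at an induction ``mirroring'' the case analysis of Proposition~\ref{thm:trace1}, while explicitly deferring the delicate point (tracking the starred endpoint corrections through the splittings). But that lemma is where all of the inductive work of the theorem now lives --- the trace identity does not descend to $(1,2)$-entries, and your proposed workaround via $(M)_{1,2}=\tr(ME_{21})$ and $E_{21}=-A_XM_XA_X$ reintroduces a rank-one insertion of exactly the kind you are trying to analyse, so it is not clear the induction closes without circularity. A second, smaller issue: for the product of the $h$ bilinear forms to have error in $R$ (total degree at most $q-2$), each factor's error must lie \emph{two} degrees below its leading term, not one; you write only ``lower degree,'' and this strengthened form is exactly what the analogue of the $R$-clause of Proposition~\ref{thm:trace1} must assert and what the induction must preserve. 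Until the entry-level lemma is stated in that sharpened form and proved, the argument is a correct reduction to an open claim rather than a proof.
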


\begin{proof}
We  first check the case $q=1, h=1$ by hand. (Note that such a  block cannot be the holonomy matrix of a simple closed curve.) We have: $$A_{X} \Omega_1 B_{Y}\Omega_0 =  \begin{pmatrix}  X(1+Y) -(X+1)& * \\ * & 1\end{pmatrix} ,$$ hence $\tr A_{X} \Omega_1 B_{Y}\Omega_0 = XY$. Since the term $\Omega_0 A_X\Omega_1$ contributes the factor $X$  and the term $B_Y$ contributes $Y$, this fulfills our hypothesis.

\medskip

Now work by induction on $h$.  Suppose the result holds for products $$  \Omega_u A_{X_1} \Omega_{i_1} A_{X_2} \Omega_{i_2} \ldots A_{X_s}$$ containing at most $h-1$ terms of the form $B_{Y_r}$ and consider a product $$  \Omega_u A_{X_1} \Omega_{i_1} A_{X_2} \Omega_{i_2} \ldots \Omega_v A_{X} \Omega_1 B_{Y_h}\Omega_0 A_{X}.$$ 

There are four possible cases:

$u=1,v=0;\;\; u=1,v=1;\;\;u=0,v=0;\;\;u=0,v=1$.

\medskip

\noindent \emph{Case $u=1,v=0$.} Consider the extra contribution to the trace resulting from the additional block $A_{X} \Omega_1 B_{Y_h}\Omega_0 A_{X}$. The first occurrence of $A_X$ appears in a block $\Omega_0 A_{X}  \Omega_1$ which, according to what we want to prove, should contribute a factor $X$. Likewise the block  $\Omega_0 A_{X}  \Omega_1$  containing the second occurrence of $A_X$ should contribute $X$, and the term $B_Y$ should contribute $Y$. Thus it is sufficient to show that 
\begin{eqnarray*} 
\tr  ( \Omega_1 A_{X_1} \Omega_{i_1} A_{X_2} \Omega_{i_2} \ldots  A_{X_s} \Omega_0 A_{X} \Omega_1 B_{Y}\Omega_0 A_{X}) &=& \\  \pm  X^2Y\tr  ( \Omega_1 A_{X_1} \Omega_{i_1} A_{X_2} \Omega_{i_2} \ldots  A_{X_s} \Omega_0 A_X ) + R 
\end{eqnarray*} 
where $R$ denotes terms of total degree at most $2$ less then the first term in the $X_j$.

Splitting the product as $$(\Omega_1 A_{X_1} \Omega_{i_1} A_{X_2} \Omega_{i_2} \ldots \Omega_0 A_{X} )\times (\Omega_1 B_{Y}\Omega_0 A_{X})$$ and using~\eqref{eqn:grandfather}, we see that the second factor contributes $XY$ and the first factor, containing the sequence $\Omega_0 A_X  \Omega_1$, contributes $X$. The remaining term $$ ( \Omega_1 A_{X_1} \Omega_{i_1} A_{X_2} \Omega_{i_2} \ldots \Omega_0 A_{X} )\times (\Omega_1 B_{Y}\Omega_0 A_{X})^{-1}$$ coming from~\eqref{eqn:grandfather} has, as usual, degree in the $X_j$ lower by $2 $. This proves the claim in this case.

\medskip

\noindent \emph{Case $u=1,v=1$.} Again splitting the product as $$ ( \Omega_1 A_{X_1} \Omega_{i_1} A_{X_2} \Omega_{i_2} \ldots \Omega_1 A_{X} )\times (\Omega_1 B_{Y}\Omega_0 A_{X}),$$ the first split factor contains the block $ \Omega_1 A_X \Omega_1$ which contributes a factor $(X+1)$ to the trace. The second split factor contributes $XY$. 

In the unsplit product we have from the first occurrence of $A_X$ the block $ \Omega_1 A_X \Omega_1$, which contributes a factor $X+1$, and from the second $A_X$ the block $\Omega_0 A_{X}\Omega_1$, which contributes $X$, again proving our claim.

\medskip

\noindent  \emph{Case $u=0,v=0$.} This can be done by inverting the previous case. Aletrnatively, splitting again as $$ ( \Omega_0 A_{X_1} \Omega_{i_1} A_{X_2} \Omega_{i_2} \ldots \Omega_0 A_{X} )\times (\Omega_1 B_{Y}\Omega_0 A_{X}),$$ the first split factor contains the block $\Omega_0 A_X \Omega_0$, which contributes a factor $X+1$, while the second split factor contributes, as usual, $XY$.

In the unsplit product we have from the first $A_X$ the block $ \Omega_0 A_X \Omega_1$ which contributes $X$, and from the second $A_X$ the block $\Omega_0 A_{X}\Omega_0$ which contributes $X+1$, again proving our claim.

\medskip

\noindent  \emph{Case $u=0,v=1$.} Again split as $$ ( \Omega_0 A_{X_1} \Omega_{i_1} A_{X_2} \Omega_{i_2} \ldots \Omega_1 A_{X} )\times (\Omega_1 B_{Y}\Omega_0 A_{X}).$$ The first split factor, containing $ \Omega_1 A_X \Omega_0$, contributes $X+2$ and the second split factor contributes $XY$. 

In the unsplit product we have from the first $A_X$ the term $\Omega_1 A_X \Omega_1$, which contributes $X+1$, and from the second $A_X$ the term $\Omega_0 A_{X}\Omega_0$, which contributes $X+1$. The induction still works because, up to terms of lower degree, $X(X+2)Y = (X+1)(X+1)Y$.
\end{proof}

\begin{Remark}{\rm
Not all the cases discussed above are realisable as the holonomy representations of simple connected loops $\g$. For example, the cases
$$ v = 1, u = 1, Y = +2 \;\;\;\;\text{and}\;\;\;\; v = 0, u = 0, Y = -2 $$ produce non-simple curves.
}
\end{Remark}

\begin{proof}[Proof of Theorem~\ref{thm:traceformula}]
This follows  from Proposition~\ref{prop:holproduct} on setting $Y_j = \pm2$ in Theorem~\ref{thm:trace2}. (For the final  statement see Lemma~\ref{Parab}.) 
\end{proof}


\end{document}